\newcommand{\R}{{\mathbb R}}
\newcommand{\x}{{\mathbb X}}
\newcommand{\N}{{\mathbb N}}
\newcommand{\So}{\mathbb{S}^1}
\newcommand{\e}{\varepsilon}
\newcommand{\id}{\operatorname{id}}
\newcommand{\K}{\mathcal{K}}
\newcommand{\F}{F}
\newcommand{\G}{G}
\newcommand{\mH}{H}
\newcommand{\on}{\operatorname}
\theoremstyle{plain}
\newtheorem{theorem}{Theorem}[section]
\newtheorem{cor}{Corollary}[section]
\newtheorem{lemma}{Lemma}[section]
\newtheorem{proposition}{Proposition}[section]
\theoremstyle{definition}
\newtheorem{definition}{Definition}[section]
\newtheorem{example}{Example}[section]
\newtheorem{remark}{Remark}[section]
\newtheorem{conjecture}{Conjecture}[section]
\newtheorem{question}{Question}[section]
\numberwithin{equation}{section}
\begin{document}

\title[Transition Phenomena for the Attractor of an Iterated Function System] {Transition Phenomena for the Attractor of an Iterated Function System}
\author[K. Le\'sniak]{Krzysztof Le\'sniak}
\address{Faculty of Mathematics and Computer Science, 
	Nicolaus Copernicus University in Toru\'{n},
	Chopina 12/18, 87-100 Toru\'{n}, 
	Poland}
\email{much@mat.umk.pl}
\author[N. Snigireva]{Nina Snigireva}
\address{School of Mathematical Sciences,
	Dublin City University,
	Glasnevin, Dublin 9, 
	Ireland}
\email{nina.snigireva@dcu.ie}
\author[F. Strobin]{Filip Strobin}
\address{Institute of Mathematics, 
	Lodz University of Technology, 
	W\'{o}lcza\'{n}ska 215, 90-924 \L\'{o}d\'{z}, 
	Poland}
\email{filip.strobin@p.lodz.pl}
\author[A. Vince]{Andrew Vince}
\address{Department of Mathematics \\ University of Florida \\ USA}
\email{\tt  avince@ufl.edu} 
\subjclass[2010]{28A80}
\keywords{iterated function system, attractor}
\thanks{This work was partially supported by a grant from the Simons Foundation (322515 to Andrew Vince).}
\begin{abstract}  
Iterated function systems (IFSs) and their attractors 
have been central to the theory of fractal geometry almost 
from its inception.  And contractivity of the functions 
in the IFS has been central to 
the theory of iterated functions systems.  
If the functions in the IFS are contractions, then the IFS 
is guaranteed to have a unique attractor. 
Recently, however, there has been an interest in what occurs to
the attractor at the boundary between contractvity and 
expansion of the IFS. That is the subject of this paper.  
For a family $F_t$ of IFSs depending on a real parameter $t>0$, the
existence and properties of two types of transition attractors,
called the lower transition attractor $A_{\bullet}$ and the 
upper transition attractor $A^{\bullet}$, are investigated. 
A main theorem states that, for a wide class of IFS families, 
 there is a threshold $t_0$
such that the IFS $F_t$ has a unique attractor $A_t$ for $t<t_0$ 
and no attractor for $t>t_0$. At the threshold $t_0$, 
there is an $F_{t_0}$-invariant set $A^{\bullet}$ such that
$A^{\bullet} = \lim_{t\rightarrow t_0} A_t$.
\end{abstract}

\maketitle

\section{Introduction} \label{sec:intro}

Iterated function systems (IFSs) and their attractors 
have been central to the theory of fractal geometry almost 
from its inception.  And contractivity of the functions 
in the IFS has been central to 
the theory of iterated functions systems.  
If the functions in the IFS are contractions, then the IFS 
is guaranteed to have a unique attractor 
(see Hutchinson's seminal Theorem~\ref{thm:H} below).  
Recently, however, there has been an interest in what occurs to
 the attractor at the boundary between contractvity and 
expansion of the IFS. That is the subject of this paper. 

Let $\x$ denote a compete metric space with metric $d(\cdot, \cdot)$.
A finite iterated function system (IFS) is a set  
\[
F:=\{f_{1},f_{2}, \dots ,f_{N}\}
\]
of $N\geq 2$ continuous functions from $\mathbb X$ to itself. 
An IFS is {\it affine} if its functions are invertible affine functions 
on $d$-dimentional Euclidean space $\R^d$, 
{\it projective} if its functions are non-singular projective functions 
on $d$-dimentional real projective space $\R\mathbb P ^d$, 
and {\it M\"obius} if its functions are M\"obius transformations 
on the extended complex plane $\mathbb{C}\cup \{\infty\}$, 
i.e., on the Riemann sphere.  An affine IFS all of whose functions are 
similarities is referred to as a {\it similarity IFS}.  An affine IFS 
all of whose functions are non-singular linear maps
is refer to as a {\it linear IFS}.

For a function $f : \x \rightarrow \x$, let 
\[
Lip(f,d) := \sup_{x\neq y} \frac {d(f(x),f(y))}{d(x,y)} 
\]
denote the {\it Lipschitz constant} of $f$ with respect to the metric $d$.  
Let
\[
Lip(F,d) := \max_{f\in F} \, Lip(f,d).
\]
A function $f$ is {\it Lipschitz} if $Lip(f,d) < \infty$, 
and an IFS $F$ is {\it Lipschitz} if $Lip(F,d) < \infty$. 
A function $f$ is a {\it contraction} with respect to $d$ 
if $Lip(f,d) < 1$, and is {\it nonexpansive} if  $Lip(f,d) \leq 1$.  

\begin{definition} 
An IFS $F$ on $\x$ is {\bf contractive},  if there is 
an {\bf equivalent metric} $d'$ on $\x$, i.e., 
a metric $d'$ giving the same topology as the original metric $d$, 
such that $\mathbb X$ remains complete with respect to 
$d'$ and $Lip(F,d') <1$. \end{definition}

\noindent Allowing metrics topologically equivalent 
to the original metric is essential, for example, 
to the validity of Theorem~\ref{thm:apm} below. Also see Example~\ref{ex:evcontr}.
\vskip 2mm

For the collection $\K(\x)$ of non-empty compact subsets 
of $\x$, the classical Hutchinson operator 
$F\,:\,{\K}(\x)\rightarrow {\K}(\x)$ is given by 
\[ 
F(K):=\bigcup_{f \in F} f (K).
\]
By abuse of language, the same notation $F$ is used for the IFS, 
the set of functions in the IFS, and for the Hutchinson operator; 
the meaning should be clear from the context. 
A compact set $A\subseteq \x$ is the (strict) {\bf attractor} of $F$ 
if there is an open neighborhood $U\supseteq A$ such that 
\begin{itemize} 
\item (\textit{invariance}) $F(A) = A$, and
\item (\textit{attraction}) $A=\lim_{n\to\infty}F^{(n)}(K)$,
\end{itemize}
where $F^{(n)}$ denotes the $n$-fold composition, 
the limit is with respect to the Hausdorff metric 
and is independent of the non-empty compact set $K\subseteq U$. 
So the attractor is the Banach fixed point of 
the Hutchinson operator on $\K(U)$.    
The largest such set $U$ is called the {\it basin} of $F$.  

\begin{theorem}[Hutchinson \cite{hutchinson}] \label{thm:H} 
A contractive IFS on a complete metric space $\x$
has a unique attractor with basin $\x$.  
\end{theorem}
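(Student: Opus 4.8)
The plan is to reduce the statement to the Banach fixed point theorem applied to the Hutchinson operator $F$ acting on the hyperspace $\K(\x)$ equipped with the Hausdorff metric. Since the notion of attractor depends only on the topology of $\x$, and the definition of \emph{contractive} supplies an equivalent metric $d'$ for which $\x$ remains complete and $s:=Lip(F,d')<1$, there is no loss in working throughout with $d'$ in place of $d$. Let $h$ denote the Hausdorff metric on $\K(\x)$ induced by $d'$. Note first that $F$ genuinely maps $\K(\x)$ into itself: each $f\in F$ is continuous, so $f(K)$ is compact whenever $K\in\K(\x)$, and a finite union of compact sets is compact.

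Next I would establish the two elementary estimates that govern the behaviour of $h$ under the operations building the Hutchinson operator. For a single Lipschitz map $f$ with $Lip(f,d')\le s$, one checks directly from the definition of the Hausdorff metric that $h(f(A),f(B))\le s\,h(A,B)$. One then verifies the union estimate $h\big(\bigcup_{i}A_i,\bigcup_i B_i\big)\le \max_i h(A_i,B_i)$ over any finite index set, which holds because the one-sided distance from a point of one union to the other union is controlled by the corresponding single-set distance. Combining these two facts gives, for all $A,B\in\K(\x)$,
\[
h(F(A),F(B))=h\Big(\bigcup_{f\in F}f(A),\ \bigcup_{f\in F}f(B)\Big)\le \max_{f\in F}h(f(A),f(B))\le s\,h(A,B),
\]
so that $F$ is a contraction on $(\K(\x),h)$ with ratio $s<1$.

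The step I expect to be the main obstacle---indeed the only genuinely nontrivial ingredient---is the completeness of the hyperspace: $(\K(\x),h)$ is complete whenever $(\x,d')$ is. This is a classical but somewhat delicate fact. Given a Cauchy sequence $(K_n)$ in $\K(\x)$, I would identify the candidate limit as the set of all $x\in\x$ that arise as limits of sequences $x_n\in K_n$, and then argue, using the Cauchy condition together with the completeness of $(\x,d')$, that this set is non-empty, compact, and is in fact the $h$-limit of $(K_n)$. I would either prove this directly or invoke it as a known result.

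With these pieces in place the conclusion is immediate. By the Banach fixed point theorem applied to the contraction $F$ on the complete space $(\K(\x),h)$, there is a unique $A\in\K(\x)$ with $F(A)=A$, and $F^{(n)}(K)\to A$ in $h$ for every $K\in\K(\x)$. Invariance holds by construction, and the attraction property holds for \emph{every} non-empty compact $K\subseteq\x$ with no restriction on $K$; hence the open neighbourhood $U$ in the definition of attractor may be taken to be all of $\x$, so the basin of $F$ equals $\x$. Uniqueness of the attractor follows from the uniqueness of the Banach fixed point.
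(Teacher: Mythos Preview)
Your argument is correct and is precisely the classical Hutchinson proof: pass to the hyperspace $(\K(\x),h)$, verify that the Hutchinson operator is a Banach contraction there, invoke completeness of $(\K(\x),h)$, and apply the contraction mapping principle. The paper, however, does not supply its own proof of this theorem; it merely states the result with attribution to \cite{hutchinson} and uses it as background, so there is nothing to compare against beyond noting that your outline matches the standard proof in the cited source.
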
 

In classical IFS theory, it is assumed that the functions 
in the IFS are contractions, a natural assumption 
in light of Hutchinson's theorem.  More recently, however, 
papers have appeared on IFS attractors assuming average contractivity 
(see \cite{st} for a survey), on IFSs that are weakly contractive 
(see, for example, \cite{lss}), and on relaxing the definition 
of an attractor; see, for example, \cite{lm1,lm2} 
in which the notion of a semiattractor is introduced 
to explain the nature of supports of invariant measures 
of average contractive IFSs  \cite{BarnsleyElton}. 
This paper is concerned with attractor phenomena 
at the transition between contractivity 
and expansion of a one-parameter IFS family, between the existence 
and non-existence of an attractor. To illustrate this kind of 
transition phenomena, consider the following family $F_t$ of IFSs 
that depends on a real parameter $t > 0$, which is based on \cite[Example 1.1]{V}. 

\begin{example} \label{ex:intro3d} 

In $\R^3$ let $F_t := \{f_{(i,t)}, 1\leq i\leq 2\}$ be the one-parameter 
affine family where $f_{(i,t)}(v) = t\, L_i(v-q_i) +q_i$, and where
\[
L_1 = 
\begin{pmatrix} 
\frac{\sqrt{2}}{2} & -\;\frac{\sqrt{2}}{2} & 0 \\ 
\frac{\sqrt{2}}{2} & \frac{\sqrt{2}}{2} & 0 \\
0 & 0 & 1 \\
\end{pmatrix}, 
\]
is the rotation by $\pi/4$ about the $z$-axis and 
$q_1=(0,0,2)$ is a fixed point of $L_1$
outside the $xy$-plane; 
$L_2=0.4 \, L_1$ and $q_2=(1,0,0)$.
\vskip 2mm

\noindent For $t\in (0,1)$, the IFS $F_t$ is contractive
and has an attractor $A_t$. Figure~\ref{fig:morph-3d} shows 
views of $A_t$ for $t =0.9$ and $t=0.96$. For $t\geq 1$, 
the IFS $F_t$ fails to be contractive and has no attractor.  
The value $t=1$ is called a {\it threshold}, defined precisely 
in Definition \ref{def:t} below.    

The question arises as to the nature of the transition 
at the threshold $t=1$. In this example, intriguing $F_1$-invariant 
sets occur. We refer to such sets as  {\bf transition attractors}, 
and we consider two types: 
{\it lower transition attractors}, denoted $A_{\bullet}$,
and {\it upper transition attractors}, denoted $A^{\bullet}$. 
Precise definitions appear in Section~\ref{sec:uls}. 
The terminology ``upper" and ``lower" is due to the fact that,
for appropriately defined one-parameter families, it is the case that
$A_{\bullet} \subseteq A^{\bullet}$.  

Figure~\ref{fig:tr2-3d} shows the lower transition
attractor and Figure~\ref{fig:tr3-3d} shows the upper transition attractor for the IFS family of Example~\ref{ex:intro3d}.  
The subject of transition attractors, in two guises, 
was introduced independently in \cite{KS} and \cite{V}. 
\end{example}

\begin{definition} \label{def:t}
A {\bf one-parameter family} is an IFS family
\begin{equation*}  
F_t := \{f_{(1,t)}, f_{(2,t)} \dots, f_{(N,t)}\}
\end{equation*} 
parametrized by a real number $t\in (0, \infty)$.  The intuition 
is that, the nearer the parameter $t$ is to $0$, the more 
contractive the functions in the IFS, and as $t$ increases, 
the functions in $F_t$ become less contractive.  
A real number $t_0$ is called the {\bf threshold} for 
the existence of an attractor of $F_t$ if $F_t$ has an attractor 
for $t<t_0$ but fails to have an attractor for $t>t_0$.  
\end{definition}

\begin{figure}[htb]  
\vskip 3mm
\includegraphics[width=5cm, keepaspectratio]{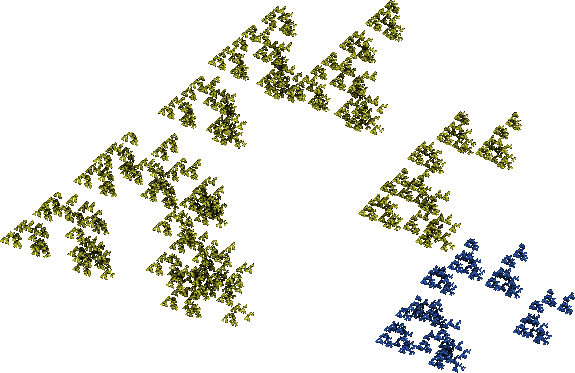} \hskip 10mm
\includegraphics[width=5cm, keepaspectratio]{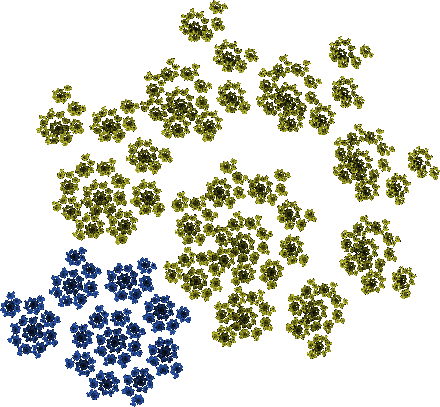}
\vskip 5mm 
\includegraphics[width=5cm, keepaspectratio]{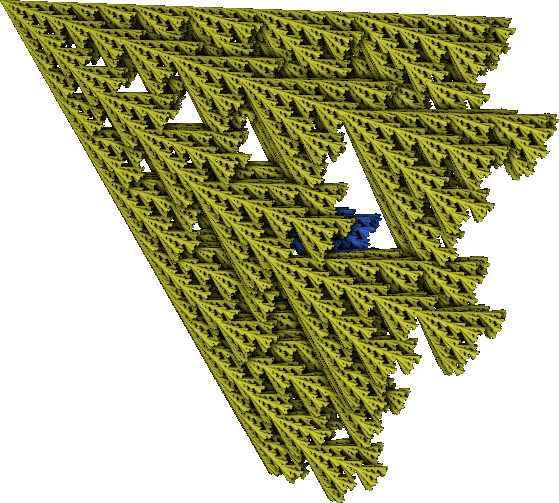}  \hskip 10mm
\includegraphics[width=5cm, keepaspectratio]{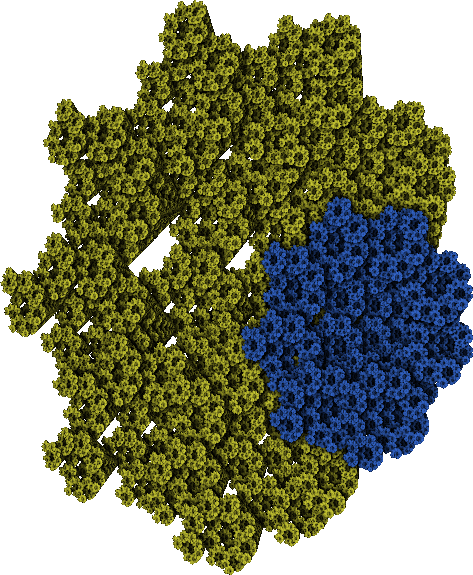}
\caption{The attractor $A_t$ for the one-parameter 
affine family $F_t$ of Example~\ref{ex:intro3d} 
for parameter values $t = .9$ (top line), $t= .96$ (bottom line); 
side and bottom view of a fractal "cone". The green and blue colours indicate the image of the attractor
under the two maps of the IFS.  Note that $f_{(1,t)}(A)\cap f_{(2,t)}(A)\neq \emptyset$.
}
\label{fig:morph-3d}
\end{figure}

\begin{figure}[th]  
\vskip 11mm  \includegraphics[width=5cm, height=7cm, keepaspectratio]{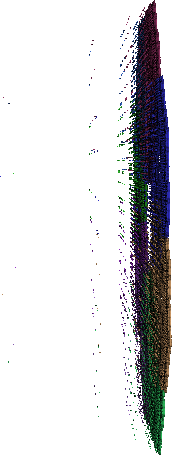}
\hskip 6mm  \includegraphics[width=5cm, keepaspectratio]{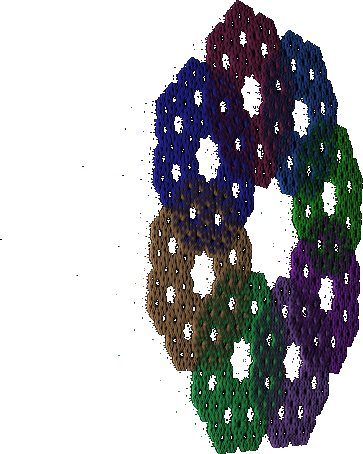} 
\caption{The lower transition attractor of Example~\ref{ex:intro3d} - side and bottom view of a fractal ``cone".}\label{fig:tr2-3d}
\end{figure}

\begin{figure}[th]  
	\vskip 11mm  \includegraphics[width=5cm, height=7cm, keepaspectratio]{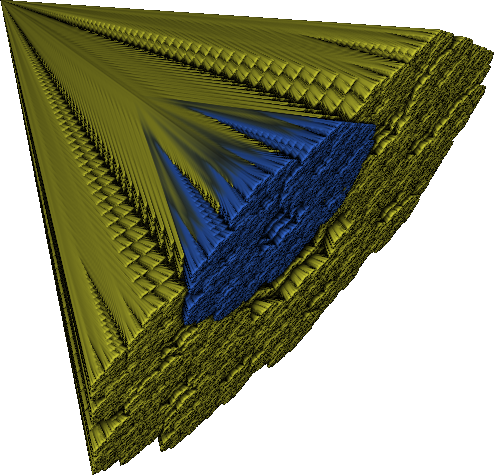}
	
\caption{ The upper transition attractor of Example~\ref{ex:intro3d}.}
 \label{fig:tr3-3d}
\end{figure}

The main open question in \cite{V} was the following.

\begin{question} \label{conj:A} 
If $A_t, \, t\in (0,t_0),$ denotes the attractor 
of a one-paramter  family $F_t$ of affine IFSs with threshold $t_0$, 
what conditions on $F_t$ guarantee the existence of 
a unique upper transition attractor, i.e.,
a compact $F_{t_0}$-invariant set $A^{\bullet}$ such that   
\[A^{\bullet}= \lim_{t\rightarrow t_0} A_t.\]
\end{question}

In \cite{V} certain conditions on a one-parameter family 
of affine functions were conjectured to guarantee such 
an upper transition attractor $A^{\bullet}$.  A main result 
in this paper is a proof of a strong version of that conjecture 
in the setting of a real Banach space.

\section{Organization - Previous and New Results} 

The paper is organized as follows.  
\vskip 1mm

\begin{itemize}
\item (Section~\ref{sec:ca}: Contractivity, Attractors, and Thresholds)

 In this paper we are interested in transitions for one-parameter IFS families 
$F_t$ at thresholds between the existence and non-existence of an attractor, 
between contractivity and of non-contractivity of $F_t$.  For affine families 
(Definition~\ref{def:af}) like that of  Example~\ref{ex:intro3d}, it is known that
there does exist a single threshold (Theorem~\ref{thm:af}).  This is also the case
for our main Theorem~\ref{thm:fildec15}.  

From the origin of IFS theory, the existence of an attractor 
has been associated with the contractivity of the IFS. 
The precise relationship, however, has not been completely delineated.  
The issue involves the converse of Hutchinson's Theorem~\ref{thm:H}.  
For an IFS $F$ on a complete metric space, Hutchinson's theorem states 
that contractivity of an IFS is a sufficient condition for the existence 
of a unique attractor.  When the IFS contains only one mapping, 
the converse (which is a converse to the Banach Contraction 
Mapping Theorem) was proved by Jan\"os \cite{LJ} and 
by Leader \cite{SL}.  A converse is known to hold for affine, 
projective and M\"obius IFSs (Theorem~\ref{thm:apm}). In general, however, there are examples of IFSs which admit attractors yet there is no
equivalent metric with respect to which the functions in the IFS are contractions.

In Examples~\ref{ex:S1}, \ref{ex:S2}, and  \ref{ex:S3}, the IFS $F$ admits a unique attractor 
but $Lip(F,d) > 1$ for all equivalent metrics $d$ on $\x$. 
\vskip 2mm

\item (Section~\ref{sec:uls}:  Lower Transition Attractors, Upper Transition Attractors, and Semiattractors)
\vskip 2mm

If a threshold $t_0$ for the existence of an attractor 
does exist for a one-parameter family, then 
the question arises as to what occurs at this threshold. 
For some one-parameter families $F_t$, there exist 
intriguing $F_{t_0}$-invariant sets referred to as 
the lower and upper transition attractors 
(Definitions~\ref{def:l} and \ref{def:u}). The existence and some properties of a lower attractor is the subject of  
Theorem~\ref{thm:lowA},  Corollary~\ref{cor:compact}, and 
Proposition~\ref{prop:sym}. Example~\ref{ex:dream3d} illustrates these results. 
The existence and some properties of an upper transition attractor is the subject of 
Theorem~\ref{thm:ut2} and Proposition~\ref{prop:sym}. The relationship between
the lower and upper transition attractors is the subject of Theorem~\ref{thm:LU}. 

The lower transition attractor of certain one-parameter families 
is shown in statement (iv) of Theorem~\ref{thm:lowA}   to be the semiattractor 
(Definition~\ref{def:s}) of an associated single IFS. 
Properties of semiattractors are contained in Theorem~\ref{thm:semi}. 
\vskip 2mm

\item (Section~\ref{sec:ut}: The Existence of a Unique Upper Transition Attractor)
\vskip 1mm

Theorem~\ref{thm:fildec15}, the main result of the paper, 
provides an answer to Question~\ref{conj:A} in the introduction - giving conditions that guarantee a {\it unique} upper transition attractor at a threshold for
the existence of an attractor.
The existence of a unique upper transition attractor was 
conjectured for a special type of one-parameter similarity family 
in Euclidean space in \cite{V}.  
The underlying space in Theorem~\ref{thm:fildec15} is the more general Banach space,
and the one-parameter families are more general than in \cite{V}.     Examples~\ref{ex:two}, \ref{ex:three}, \ref{ex:four}, and \ref{ex:inf}
 show that the assumptions in the hypothesis of Theorem~\ref{thm:fildec15} are all necessary, at least in the infinite dimensional case.  Question~\ref{ques:per} in Section~\ref{sec:op} asks whether the ``periodicity" assumption in Theorem~\ref{thm:fildec15} can be dropped assuming a less exotic space. 
\vskip 2mm

\item (Section~\ref{sec:op}:  Open Problems)
\vskip 1mm

There remain questions and conjectures about 
thresholds and transition attractors that remain open. 
Several are posed in this section. 

\end{itemize}

\section{Contractivity, attractors, and thresholds} \label{sec:ca}
 
For an IFS on a complete metric space, the converse of 
Hutchinson's Theorem~\ref{thm:H}, does not, in general, hold.  
Examples~\ref{ex:S1}, \ref{ex:S2}, and \ref{ex:S3} 
are provided below. These examples 
not withstanding, a converse does hold in the affine, M\"obius, 
and projective cases. 

\begin{theorem}[\cite{ABVW,bv1,V2}] \label{thm:apm}  
An affine, M\"obius, or projective IFS can have 
at most one attractor.  Moreover,
\begin{enumerate}
\item An affine IFS $F$ has an attractor if and only if 
$F$ is contractive on $\R^d$.
\item A M\"obius IFS $F$ has an attractor 
$A \neq \mathbb C\cup \{\infty\}$  if and only if 
$F$ is contractive on an open set 
whose closure is not $\mathbb C\cup \{\infty\}$.
\item A projective IFS $F$ has an attractor 
that avoids some hyperplane if and only if 
$F$ is contractive on the closure of some open set.
\end{enumerate}
\end{theorem}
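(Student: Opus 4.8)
The plan is to treat the ``if'' direction uniformly across all three classes and then attack the three converse implications by case-specific geometric constructions, letting uniqueness emerge once contractivity has been established. The ``if'' direction is immediate: once $F$ is contractive with respect to some complete equivalent metric $d'$, Hutchinson's Theorem~\ref{thm:H} supplies a unique attractor. In the affine case this metric lives on all of $\R^d$, so the basin is everything; in the projective and M\"obius cases one applies Hutchinson to the complete metric space obtained by restricting to the relevant invariant open region. The real content is therefore the converse together with uniqueness, and my strategy is to show that in each class the existence of an attractor forces a concrete geometric configuration -- an invariant bounded, convex, or disk-like region that each generator maps strictly inside itself -- and then to manufacture from that configuration an equivalent metric in which every $f_i$ is a genuine contraction. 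Uniqueness then falls out, since a contractive Hutchinson operator has a single fixed point in the relevant hyperspace of compact sets.

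For the affine case, write $f_i(x)=L_i x + b_i$. The key reduction is that an affine IFS has an attractor if and only if the joint spectral radius $\rho$ of $\{L_1,\dots,L_N\}$, namely $\rho=\lim_{n}\max_{|w|=n}\|L_{w_1}\cdots L_{w_n}\|^{1/n}$, satisfies $\rho<1$. For the nontrivial direction, suppose $A$ is the attractor with basin $U$, fix a ball $B\subseteq U$ about a point of $A$, and write $f_w=f_{w_1}\circ\cdots\circ f_{w_n}$ and $L_w=L_{w_1}\cdots L_{w_n}$. Since $A=\lim_n F^{(n)}(B)$ and $A=\bigcup_{|w|=n}f_w(A)$, the crucial claim is that the pieces $f_w(A)$ have diameters tending to $0$ uniformly in $w$ as $|w|\to\infty$; because $\operatorname{diam} f_w(A)$ is comparable to $\|L_w\|\operatorname{diam}A$, this yields $\|L_w\|\to 0$ uniformly, i.e.\ $\rho<1$. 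Conversely, when $\rho<1$ I would build an extremal norm $\|x\|_{*}:=\sup_{w}\lambda^{-|w|}\,|L_w x|$ for a fixed $\lambda\in(\rho,1)$; this is a finite equivalent norm on $\R^d$ with $\|L_i\|_{*}\le\lambda<1$, so $F$ is contractive. Here the extremal metric is complete on all of $\R^d$, the Hutchinson operator is a genuine contraction on $\K(\R^d)$, and uniqueness is immediate.

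For the projective case I would pass to an affine chart complementary to a hyperplane missed by the attractor; the attraction property should then yield a bounded convex neighborhood $C$ of $A$ that each $f_i$ maps into its interior. The Birkhoff--Hopf theorem shows that each $f_i$ contracts the Hilbert metric of $C$ by a definite factor, and a uniform version over the finitely many generators furnishes the required equivalent metric on the closure of the open set, giving contractivity and, on restricting Hutchinson to compact subsets of $C$, uniqueness. The M\"obius case is entirely analogous with the Riemann sphere $\mathbb{C}\cup\{\infty\}$ in place of $\R\mathbb{P}^d$: the hypothesis that the closure of the contractivity region is not all of $\mathbb{C}\cup\{\infty\}$ provides a disk-like domain carried strictly inside itself, and the Schwarz--Pick lemma, with the Poincar\'e metric playing the role of the Hilbert metric, replaces Birkhoff--Hopf.

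The principal difficulty, common to all three classes, is precisely this converse passage: from the purely dynamical datum ``there is an attracting compact invariant set'' to a single equivalent metric in which every generator is \emph{simultaneously} contractive. In the affine case the delicate point is establishing the \emph{uniform} shrinking of the cylinder diameters, which is genuinely stronger than mere boundedness of the products $L_w$ and is what upgrades $\rho\le 1$ to $\rho<1$. In the projective and M\"obius cases the obstacle is geometric: one must extract, from the existence of the attractor alone, an honest convex body (respectively disk) carried strictly inside itself, controlling the danger that the attractor approaches the excluded hyperplane (respectively fills the sphere), and one must ensure the contraction factors can be taken uniform across the generators so that the Birkhoff--Hopf and Schwarz--Pick estimates assemble into contractivity of the whole IFS rather than of its individual maps.
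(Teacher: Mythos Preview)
The paper does not prove this theorem; it is quoted from \cite{ABVW,bv1,V2} and used as background, so there is no in-paper proof to compare against.  Your sketch is broadly consonant with the arguments in those references: the affine case in \cite{ABVW} does go through the joint spectral radius and an extremal (Barabanov-type) norm, the projective case in \cite{bv1} does hinge on a Hilbert/Birkhoff contraction once one has an invariant convex body, and the M\"obius case in \cite{V2} is handled in a similar spirit.

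That said, your outline glosses over exactly the steps that carry the real weight in those papers.  In the affine case, the assertion that ``$\operatorname{diam} f_w(A)$ is comparable to $\|L_w\|\operatorname{diam} A$'' fails when $A$ does not affinely span $\R^d$, so one cannot read off $\|L_w\|\to 0$ from the shrinking of the pieces of $A$ alone; the actual argument works with a full-dimensional ball in the basin and requires care to pass from ``$F^{(n)}(B)\to A$ in Hausdorff distance'' to ``$\|L_w\|\to 0$ uniformly in $|w|=n$'', which is the genuine crux.  In the projective and M\"obius cases, the hard part is precisely what you defer: manufacturing, from the mere existence of an attractor, a convex body (respectively a topological disk) mapped strictly inside itself by every generator.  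This is not a routine consequence of attraction and is where most of the work in \cite{bv1,V2} lies; your invocation of Birkhoff--Hopf and Schwarz--Pick is correct once that domain is in hand, but producing it is the substance of the theorem, not a preliminary.
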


For an IFS $F$ the distinction between all functions in $F$ being contractions and $F$ contractive must be emphasized.  See Example~\ref{ex:evcontr} below. 

\begin{example}[A family of contractive affine IFSs $F_t$ on $\R^2$ such that the functions in $F_t$ are not contractions with respect to the Euclidean metric cf. \cite{lss} Example 6.3.]
\label{ex:evcontr} 

 Define $F_t := \{f_{(1,t)}, f_{(2,t)} \}$, where
\[
f_{(1,t)}(v) = \begin{pmatrix}   0& \kappa_1  t \\ \lambda_1/ t & 0 \end{pmatrix} \,v \,, 
\qquad \qquad
f_{(2,t)}(v) = \begin{pmatrix} 0 &  \kappa_2 t \\ \lambda_2 /t & 0   \end{pmatrix}\, v + 
\begin{pmatrix}  1/\lambda_2 -t \\  1/\kappa_2 - 1/t \end{pmatrix},
\]  
where $\lambda_i \, \kappa_j <1$ for $i,j \in \{1,2\}$.

The functions in $F_t$ are contractions with respect to the Euclidean metric
only for \linebreak $\min \{ 1/\kappa_1, 1/\kappa_2\} > t > 
\max \{ \lambda_1, \lambda_2\}$.  
We claim, however, that  $F_t$ admits an attractor $A_t$ for all $t>0$.  Since
$F_t$ consists of affine functions for each $t>0$, it would then follow that
 the IFS $F_t$ is contractive 
for each $t>0$ by Theorem~\ref{thm:apm} part (1).

To see that $F_t$ has an attractor, considier the second iterate 
$F_{t}^2 := \{f_{(i,t)} \circ f_{(j,t)}: 1\leq i,j \leq 2\}$ 
of $F_t$ given by
\[
f_{(i,t)} \circ f_{(j,t)}\, (v) =  
\begin{pmatrix}  \kappa_i \, \lambda_j  &  0\\ 
0 &  \kappa_j \, \lambda_i \end{pmatrix} v + a(i,t),
\]
where the vectors $a(i,t), i = 1,2,$ are readily calculated.  
The two functions in $F_{t}^2$ are contractions 
for all $t\in (0,\infty)$ when $0<\kappa_i \lambda_j <1$ 
for $i,j \in \{1,2\}$, and therefore have an attractor 
for all $t\in (0,\infty)$. If an attractor exists 
for one of them, then $F_t$ and $F_t^2$ 
have the same attractor.  
Therefore $F_t$ admits an attractor $A_t$ for all $t$.  
The attractor of  $F_{t}^2$ is shown in  Figure \ref{fig:evcontr} 
for three values of $t$ in the case that 
$\lambda_1 = 1/4, \kappa_1= 3, \lambda_2= 1/5$ and $\kappa_2 = 2$.
The functions in $F_t$ are contractions with respect to 
the Euclidean metric only for $t\in(1/4,1/3)$, yet the functions 
in the second iterate 
$F_{t}^2$  
are contractions for all $t\in (0,\infty)$.

\begin{figure}[htb]  
\vskip 3mm
\includegraphics[width=5cm, keepaspectratio]{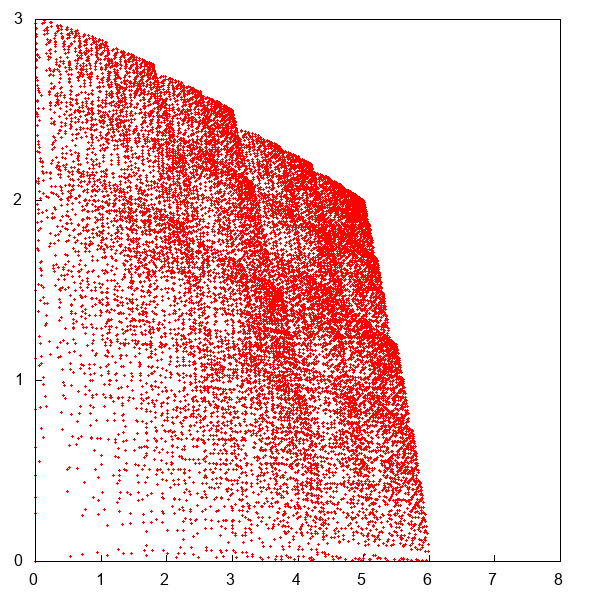}
\includegraphics[width=5cm, keepaspectratio]{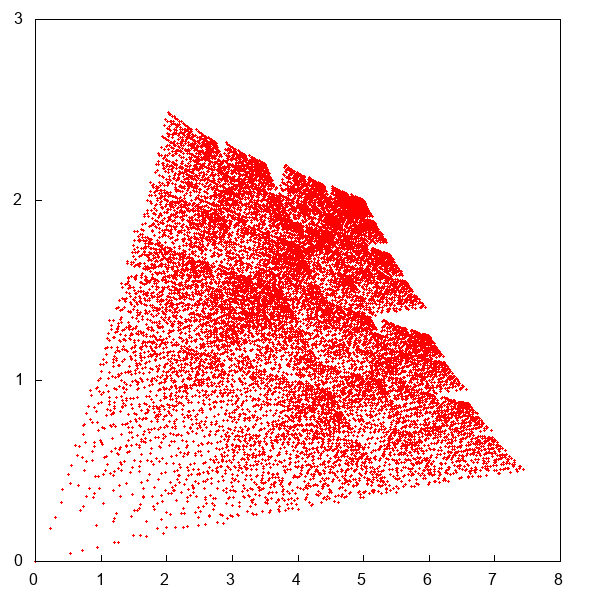}
\includegraphics[width=5cm, keepaspectratio]{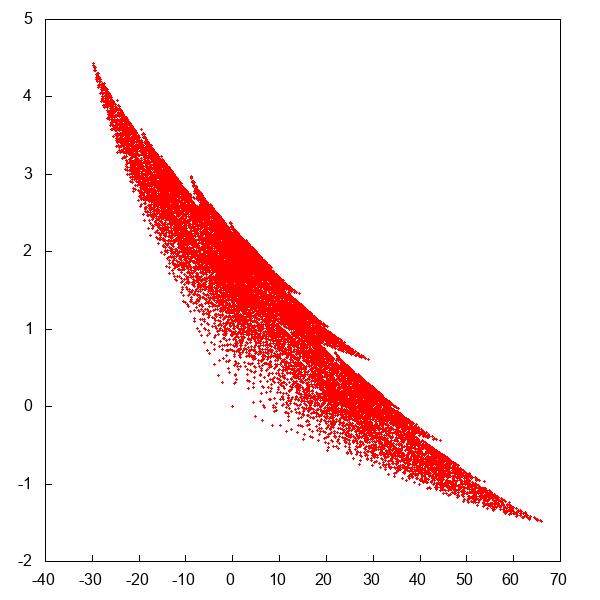} 
\caption{The attractor $A_t$ for the one-parameter 
affine family $F_t$ of Example~\ref{ex:evcontr} 
for successive parameter values $t = .5, 1, 5$.}
\label{fig:evcontr}
\end{figure}
\end{example}

There exist IFSs that have 
an attractor but are not contractive.  For the examples $F$ in \cite{bv3,lss} 
 $Lip(F,d)=1$. Our counterexamples  below are of 
\begin{enumerate}
\item an IFS $F$  on the circle $\So$ that admits a unique attractor 
but $Lip(F,d) > 1$ for all  equivalent metrics $d$ on $\x$ 
(Example~\ref{ex:S1}), 
\item a stronger counterexample of 
 an IFS $F$  on $\So$ that admits a unique 
attractor but $Lip(f,d) > 1$ 
for all $f\in F$ and all equivalent metrics $d$ on $\x$ 
(Example~\ref{ex:S2}), and 
\item an IFS on $\R^n$ that has an attractor but is not contractive (Example~\ref{ex:S3}).  
\end{enumerate} 

Let $\So$ be the unit circle centered at the origin in the complex plane, 
and let $f :\So\rightarrow \So$ be the angle doubling map $f(z) = z^2$
(cf. \cite{FitzKunze}).   
Let $\rho:\So\rightarrow \So$ be the rotation map 
$\rho(z) = e^{i \alpha}\, z$ where $\alpha/2\pi$ is irrational 
and let $g(z) =  \rho\circ f(z)$.  The following proposition is
 helpful in showing the validity of the two examples.  We include a direct proof of this proposition for completeness though it can be obtained from the standard theory of topological dynamics, see Remark \ref{rem:anotherproof}.

\begin{proposition} \label{prop:Lip}  
If $d$ is any metric on $\So$ inducing 
the standard topology on $\So$, 
then $Lip(f,d) > 1$ and $Lip(g,d) > 1$.
\end{proposition}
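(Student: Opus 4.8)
The plan is to reduce the statement to a rigidity property of nonexpansive surjections of compact spaces. Note first that both $f$ and $g=\rho\circ f$ are surjective, and each is exactly two-to-one: every point of $\So$ has two antipodal $f$-preimages, and $\rho$ is a homeomorphism. In particular neither map is injective, since, e.g., $f(1)=f(-1)=1$. Now suppose toward a contradiction that $Lip(f,d)\le 1$ for some metric $d$ inducing the standard topology on $\So$. Then $d(f(x),f(y))\le d(x,y)$ for all $x,y$, i.e.\ $f$ is a nonexpansive self-map of the compact metric space $(\So,d)$. I claim that \emph{any nonexpansive surjection of a compact metric space is an isometry}, hence injective --- contradicting that $f$ is two-to-one. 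The identical argument applies to $g$, so it suffices to establish this rigidity claim.

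To prove the claim, let $\phi$ be a nonexpansive surjection of a compact metric space $X$. All iterates $\phi^n$ are nonexpansive, so the family $\{\phi^n\}_{n\ge 0}$ is equicontinuous, and by Arzel\`a--Ascoli it is relatively compact in $C(X,X)$ with the uniform metric. Choose a uniformly Cauchy subsequence $\phi^{n_k}$ and set $p_k:=n_{k+1}-n_k\ge 1$. Writing $\phi^{n_{k+1}}=\phi^{p_k}\circ\phi^{n_k}$ and using that $\phi^{n_k}$ is \emph{surjective}, the Cauchy condition $\sup_x d\bigl(\phi^{p_k}(\phi^{n_k}(x)),\phi^{n_k}(x)\bigr)\to 0$ becomes $\sup_{y\in X} d(\phi^{p_k}(y),y)\to 0$; that is, $\phi^{p_k}\to\id$ uniformly. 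Finally, for arbitrary $x,y\in X$, continuity of $d$ gives $d(x,y)=\lim_k d\bigl(\phi^{p_k}(x),\phi^{p_k}(y)\bigr)$, while $\phi^{p_k}=\phi^{p_k-1}\circ\phi$ together with nonexpansiveness of $\phi^{p_k-1}$ yields $d\bigl(\phi^{p_k}(x),\phi^{p_k}(y)\bigr)\le d(\phi(x),\phi(y))$. Hence $d(x,y)\le d(\phi(x),\phi(y))$, the reverse of the nonexpansive inequality, so $\phi$ preserves all distances and is an isometry.

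The one delicate point --- and the step I expect to be the crux --- is obtaining $\phi^{p_k}\to\id$: nonexpansiveness alone pushes images together and never apart, so without surjectivity one cannot recover the identity in the limit. It is precisely surjectivity of the iterates that lets the Arzel\`a--Ascoli ``gap maps'' $\phi^{p_k}$ sweep over all of $X$ and thereby approximate $\id$. Everything else is routine: equicontinuity, extraction of a convergent subsequence, and the final two-line comparison of distances. Applying the claim to $f$ and to $g$ produces the desired contradictions with their two-to-one-ness, and hence $Lip(f,d)>1$ and $Lip(g,d)>1$ for every metric $d$ inducing the standard topology.
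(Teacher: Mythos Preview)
Your proof is correct and takes a genuinely different route from the paper.

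The paper argues separately for $f$ and $g$. For $f$ it gives a direct, hands-on argument: assuming $Lip(f,d)\le 1$, it uses the halving preimages $z_{\phi/2^n}\to 1$ to force $d(1,z)\le d(1,z_{\phi/2^n})\to 0$ for every $z$, a contradiction. For $g$ it does not repeat this; instead it observes the conjugacy $g=\rho^{-1}\circ f\circ\rho$ and pulls back the metric to reduce to the case of $f$. Your argument, by contrast, isolates a single abstract lemma --- every nonexpansive surjection of a compact metric space is an isometry --- and applies it uniformly to both $f$ and $g$, using only that each is a non-injective surjection. The paper itself notes (in the remark following the proposition) that the result can be recovered from the standard theory of topologically expanding maps; your Arzel\`a--Ascoli recurrence argument is in that spirit but self-contained.

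What each approach buys: the paper's proof is completely elementary, requiring nothing beyond the explicit dynamics of angle-doubling and a change of metric, while your approach is more conceptual and more general --- it would apply verbatim to any continuous non-injective self-surjection of a compact space, with no need to identify a conjugacy or exploit the specific form of the map.
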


\begin{proof}
Suppose that $Lip(f,d) \leq 1$ for some $d$. 
Abbreviate the point $e^{i \theta}$ by $z_{\theta}$. 
Then for any $z = z_{\phi} \neq 1$ we have
\[ 
d(1, z) = d(f(1), f(z_{\phi/2})) \leq 
d(1,z_{\phi/2}) = d(f(1), f(z_{\phi/4})) \leq 
d(1,z_{\phi/4}) = \cdots.
\]
Therefore  $d(1,z) \leq d(1,z_{\phi/2^n})$ for all $n\in\N$. 
Because $z_{\phi/2^n}  \to 1$ as $n\to \infty$ 
with respect to the standard topology on $\So$, 
we have that $d(1,z) < \epsilon$ for every $\epsilon >0$. 
Therefore $d(1,z)=0$ and $z =1$, a contradiction.

To see that $Lip(g,d)>1$, note that the mappings $f$ and $g$ 
are conjugate, specifically $g = \rho^{-1}\circ f\circ \rho$.  
If $d$ is any metric inducing the standard topology 
on $\So$, then the metric
\[
d'(x,y) := d(\rho^{-1}(x), \rho^{-1}(y))
\]
also induces the standard topology on $\So$. 
By the paragraph above, there exist $x',y'\in S^1$
such that 
\[
d'(f(x'),f(y')) = c \,d'(x',y'),
\]
where $c>1$. If $x=\rho^{-1}(x')$ and $y=\rho^{-1}(y')$, then
\[\begin{aligned}  
  d(g(x),g(y) ) &
  = d( \rho^{-1} \circ f\circ \rho(x), \rho^{-1} \circ f\circ \rho (y))    
  = d'(f\circ \rho (x),f\circ \rho (y)) = d'(f(x'),f(y') ) \\ 
  & = c\, d' (x',y') = c \, d(\rho^{-1}(x'), \rho^{-1}(y') ) 
  = c\, d(x,y).
\end{aligned}\]
\end{proof}

\begin{remark}\label{rem:anotherproof} One can easily see that $f$ and $g$ are locally  distance doubling with respect to the arc metric on $\So$.  Therefore
 they are topologically expanding 
(\cite{AokiHiraide} chapter 2.2 and \cite{Reddy}). 
Since the notion of a topologically expanding map on 
a compact space does not depend on the choice of metric,
this proves Proposition \ref{prop:Lip}.
Moreover, neither $f$ nor $g$ are 
locally nonexpansive at any point under any 
admissible metrization $d$ of $\So$.
\end{remark}

\begin{example}[An IFS on the circle $\So$ 
having an attractor, but with $Lip(F,d)>1$ 
for all admissible metrizations $d$ of $\So$] \label{ex:S1}

With $f$ and $\rho$ as defined above, let $F := \{f,\rho,\id\}$, 
where $\id$ is the identity map on $\So$. 
That $Lip(F,d) > 1$ follows from Proposition~\ref{prop:Lip}.  
That $\So$ is the attractor of $F$ is seen as follows. 
The invariance $F(\So) = \So$ is clear since $h$ is a rotation. 
That $\lim_{n\rightarrow \infty} F^{(n)}(z) = \So$ for any $z\in \So$ 
can be seen as follows. We have 
$\{\rho^{(m)}(z) : 0\leq m\leq n\} \subseteq F^{(n)}(z)$ and 
$\{\rho^{(m)}(z)\}_{m=0}^{\infty}$ is dense in $\So$, since $h$
is an irrational rotation.
\end{example}

\begin{example}[An IFS on the circle $\So$ 
having an attractor, but with $Lip(f,d)>1$ for all $f\in F$ 
under any admissible metrization $d$ of $\So$] \label{ex:S2}

With $f$ and $g$  has defined above, let $F := \{f,g\}$.  
Again, that $Lip(f,d) > 1$ and $Lip(g,d) > 1$ follows from 
Proposition~\ref{prop:Lip}. That $\So$ is 
the attractor of $F$ is seen as follows. 
The invariance $F(\So) = \So$ is clear since $f$ maps 
$\So$ onto itself. That $\lim_{n\rightarrow \infty} F^{(n)}(z) = \So$ 
for any $z\in \So$ can be seen as follows. 
Abbreviate the point $e^{i \theta}$ by $z_{\theta}$. 
For $(a_1, a_2,\dots, a_n) \in (\mathbb Z_2)^n$, 
denote by $f_{(a_1,a_2,\dots, a_n)} : \So\rightarrow\So$
the map given by
\[ 
f_{(a_1,a_2,\dots, a_n)}(z_{\theta}) = 
z_{(2^n \theta + \sum_{k=1}^{n} 2^{k-1} a_k \alpha )}.
\]
(That is $f_{(a_1,a_2,\dots,a_n)} = 
f_{a_n}\circ\dots\circ f_{a_2}\circ f_{a_1}$,
under identification $f_0:=f$, $f_1:=g$.)
Then for any $z = z_{\theta}$ we have
\begin{eqnarray*}
F^{(n)}(z) = \{f_{(a_1,\dots, a_n)}(z_{\theta}) : 
(a_1,\dots, a_n) \in (\mathbb{Z}_2)^n\} =
\{\rho^{(m)}(z_{2^n\theta}) : 0\leq m < 2^n \}.
\end{eqnarray*}
Therefore, for any $\epsilon>0$ there is an $n$ 
such that there is no arc on $\So$ of length $\epsilon$ 
not containing a point of $F^{(n)}(z)$. 
Therefore $\lim_{n\rightarrow \infty} F^{(n)}(z) = \So$.
\end{example}

\begin{example}[An IFS on $\R^n$ that has an attractor but is not contractive]
 \label{ex:S3}

Let $A$ be a unit cube in $\R^n$, or any other convex compact set in $\R^n$, other than a single point, that is the attractor of an IFS $F$.  Then  $A$ is a retract of $\R^n$, i.e., there exists a continuous map $r: \R^n \to A$ such that $r(\R^n)=A$ and $r$ restricted to $A$ is the identity map. 
(In fact, any set homeomorphic to a convex compact subset of a Banach space $\x$ is a retract of $\x$,  cf. \cite[Chp. I, Corollary 1.4, Definition 1.7 and Theorem 1.9.1 ]{Gorn})  Since
$A$ contains more than one point, the map $r$ cannot be a contraction with respect
to any metric equivalent to the Euclidean metric.  Now let $G = F \cup \{r\}$.  
Then $G$ is an IFS with attractor $A$.  Indeed, $F^k(S) \subseteq G^k(S) \subseteq F^k(S) \cup A$ for any non-empty $S\subset  \R^n$.  Since $r$ cannot be a contraction with respect to any metric equivalent to the Euclidean metric, the IFS $G$ is not
contractive.  
\end{example} 

 \begin{remark} The possibility of remetrization of a given IFS $\F$ 
by a metric making each map weakly contractive 
is equivalent to the existence of a coding map \cite{BKNNS, MiculescuMihail}. \end{remark}

\begin{definition} \label{def:af}  
A one-parameter family 
\[F_t := \{f_{(1,t)}, f_{(2,t)} \dots, f_{(N,t)}\}\]
whose functions have the form
\begin{equation*}   
f_{(i,t)}(x) = t\, f_i(x) + q_i, \qquad x\in\R^d
\end{equation*} 
where 
\[
F := \{f_1, f_2, \dots, f_N\} 
\qquad \qquad   \text{and} \qquad \qquad
Q := \{q_1, q_2, \dots, q_N\} 
\]
\vskip 2mm
\noindent are a set of invertible affine transformations on $\R^d$ 
and a set of vectors in $\R^d$, respectively, is called a 
{\bf one-parameter affine family}.   
\end{definition}
 
Theorem~\ref{thm:af} below states that a one-parameter affine family 
has a threshold for the existence of an attractor.  The threshold in 
Example~\ref{ex:intro3d} is $t_0=1$. See \cite{BW,DL,RS} for background 
on the joint spectral radius.   

\begin{theorem}[\cite{V}] \label{thm:af}  
For a one-parameter affine family $F_t$, let $t_0 = 1/\rho(F)$, 
where $\rho(F)$ is the joint spectral radius of 
the linear parts of the functions in $F$.  
Then $F_t$ has an attractor for $t <t_0$ and 
fails to have an attractor for $t > t_0$.  
\end{theorem}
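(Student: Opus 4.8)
The plan is to reduce the statement to the joint spectral radius of the linear parts by exploiting its positive homogeneity, and then to invoke the equivalence between the existence of an attractor and contractivity furnished by Theorem~\ref{thm:apm}(1). Writing $L_i$ for the linear part of $f_i$, so that $f_i(x) = L_i x + c_i$, the map $f_{(i,t)}(x) = t f_i(x) + q_i$ has linear part $tL_i$. Since the joint spectral radius is positively homogeneous of degree one, the joint spectral radius of the linear parts of $F_t$ equals $t\,\rho(F)$. Hence, via $t_0 = 1/\rho(F)$, the conditions $t < t_0$ and $t > t_0$ translate respectively into $t\,\rho(F) < 1$ and $t\,\rho(F) > 1$.

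For the sufficiency ($t < t_0$), I would use the classical characterization of the joint spectral radius as the infimum of $\max_i \|tL_i\|$ taken over all operator norms on $\R^d$ (the Rota--Strang theorem, cf. \cite{RS}). When $t\,\rho(F) < 1$, this yields a norm $\|\cdot\|_t$ with $\|tL_i\|_t < 1$ for every $i$. The induced metric $d_t(x,y) = \|x-y\|_t$ is equivalent to the Euclidean metric and complete, and each $f_{(i,t)}$ is a contraction with respect to $d_t$, because its Lipschitz constant equals the operator norm $\|tL_i\|_t$ of its linear part. Thus $F_t$ is contractive, and Hutchinson's Theorem~\ref{thm:H} (equivalently Theorem~\ref{thm:apm}(1)) produces the attractor $A_t$.

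For the necessity ($t > t_0$), by Theorem~\ref{thm:apm}(1) it suffices to show that $F_t$ is not contractive under any equivalent metric. Here I would pass from the joint spectral radius to a single expanding word: by a Berger--Wang type identity (cf. \cite{BW}), $t\,\rho(F) > 1$ guarantees a finite word $w = (w_1,\dots,w_p)$ for which the product $t^p L_{w_1}\cdots L_{w_p}$ has spectral radius strictly greater than $1$. The composition $f_w := f_{(w_1,t)} \circ \cdots \circ f_{(w_p,t)}$ is then an affine map whose linear part has spectral radius exceeding $1$, so $f_w$ possesses an expanding direction along which generic orbits escape to infinity in the Euclidean metric. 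If $F_t$ were contractive, with Lipschitz constant $s < 1$ for each $f_{(i,t)}$ under some complete equivalent metric $d'$, then $f_w$ would be a $d'$-contraction with factor at most $s^p$, so every $d'$-orbit of $f_w$ — and hence, convergence being a topological notion, every Euclidean orbit — would converge to the fixed point of $f_w$. This contradicts the escape of orbits to infinity. Therefore $F_t$ is not contractive and, by Theorem~\ref{thm:apm}(1), has no attractor.

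The main obstacle is the necessity direction, specifically the passage from $t\,\rho(F) > 1$ to a genuinely expanding composition and the verification that no exotic equivalent metric can rescue contractivity. The homogeneity and sufficiency arguments are routine once the Rota--Strang characterization is in hand; the delicate point is that contractivity is permitted with respect to arbitrary topologically equivalent metrics, so the obstruction to an attractor must be phrased in metric-independent terms — which is precisely why the spectral radius of an expanding periodic word, a conjugacy invariant, is the right tool. Note that the theorem is deliberately silent at $t = t_0$, where $t\,\rho(F) = 1$ and the behaviour is genuinely borderline; this is exactly the transition regime that the remainder of the paper investigates.
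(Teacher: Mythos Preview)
The paper does not supply its own proof of Theorem~\ref{thm:af}; the result is imported from \cite{V} and stated without argument. Consequently there is nothing in the present paper to compare your proposal against.

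That said, your outline is sound and is essentially the standard route. The homogeneity $\rho(\{tL_i\}) = t\,\rho(F)$ and the Rota--Strang characterisation handle the sufficiency cleanly. For the necessity, your use of the Berger--Wang identity to extract a word $w$ with $r(t^{|w|}L_w) > 1$ is the right idea, and the topological argument (convergence of orbits is metric-independent) correctly rules out contractivity in any equivalent complete metric. One small tightening: rather than speaking of ``generic orbits escaping to infinity'', it is cleaner to observe that for the affine map $f_w$ with linear part $M$ satisfying $r(M)>1$, the difference $f_w^{\,n}(v) - f_w^{\,n}(0) = M^n v$ diverges for suitable $v$, so at least one of the two orbits fails to converge; this immediately contradicts the Banach fixed-point conclusion without needing $f_w$ to have (or lack) a fixed point a priori.
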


More can be said for a {\bf linear family} $F_t$, all of whose 
maps are of the form $f_t(x) = t\, L(x)$, where $L$ is 
a non-singlular linear map. In this case, it immediately follows 
from Theorem~\ref{thm:af} that the attractor $A_t$ of $F_t$ 
is the origin, a single point, for all $t<t_0$, and there is 
no attractor for all $t>t_0$. However, the following holds. 

\begin{theorem}[\cite{bv2}] 
Let $F_t$ be an irreducible 
($F$ admits no non-trivial invariant subspace), 
one-parameter linear IFS family on $\R^d$ with threshold $t_0$. 
Then there exists a compact $F_{t_0}$-invariant set 
that is centrally symmetric, star-shaped, 
and whose affine span is $\R^d$.  
\end{theorem}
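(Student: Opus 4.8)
The plan is to realize the desired invariant set as a nested intersection of the iterated images of the unit ball of a \emph{Barabanov norm}. Write $\rho := \rho(F)$ for the joint spectral radius of the linear parts $F = \{L_1,\dots,L_N\}$, so that $t_0 = 1/\rho$ by Theorem~\ref{thm:af}, and set $M_i := t_0\,L_i = L_i/\rho$; thus $F_{t_0} = \{M_1,\dots,M_N\}$ is a linear family whose joint spectral radius equals $1$ and which is again irreducible, since multiplying all maps by a common scalar does not alter the common invariant subspaces. The first step is to invoke the classical existence theorem for Barabanov norms of irreducible families: there is a norm $v$ on $\R^d$ satisfying $\max_i v(M_i x) = v(x)$ for every $x\in\R^d$, and in particular $v(M_i x)\le v(x)$ for all $i$ and all $x$.

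Next I would let $B := \{x : v(x)\le 1\}$ and $S := \{x : v(x)=1\}$ be the closed unit ball and unit sphere of $v$, and consider the Hutchinson operator $\Phi := F_{t_0}$, $\Phi(C) = \bigcup_i M_i(C)$. The inequality $v(M_i x)\le v(x)$ gives $\Phi(B)\subseteq B$, so the iterates $\Phi^{(n)}(B)$ form a nested decreasing sequence of non-empty compact sets; define
\[
K := \bigcap_{n\ge 0} \Phi^{(n)}(B).
\]
A standard compactness argument (continuity of the finitely many $M_i$ together with the decreasing nesting gives $\Phi(\bigcap_n C_n)=\bigcap_n \Phi(C_n)$) yields $\Phi(K)=K$, so $K$ is $F_{t_0}$-invariant. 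Each $\Phi^{(n)}(B)$ inherits central symmetry from $B=-B$ and linearity of the $M_i$, since $\Phi(-C)=-\Phi(C)$; and it inherits star-shapedness about $0$ from the convexity of $B$, because each $M_i(C)$ is star-shaped about $0=M_i(0)$ and both a union and an intersection of sets star-shaped about a common centre is again star-shaped about that centre. Passing to the intersection, $K$ is compact, centrally symmetric, star-shaped about $0$, and contains $0$.

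The crux, and the step I expect to be the main obstacle, is to show that $K$ is non-degenerate, i.e. that its affine span is all of $\R^d$ rather than $\{0\}$; this is exactly where the full Barabanov identity and irreducibility enter. The pointwise identity $\max_i v(M_i x) = v(x)$ lets me build a forward orbit that never leaves $S$: starting from any $x_0\in S$, choose $i_1$ with $v(M_{i_1}x_0)=1$, set $x_1 := M_{i_1}x_0\in S$, and iterate to obtain $x_n = M_{i_n}\cdots M_{i_1}x_0 \in S$. Since $x_n \in \Phi^{(n)}(\{x_0\})\subseteq \Phi^{(n)}(B)$, each set $S\cap \Phi^{(n)}(B)$ is non-empty and compact, and these sets are nested; Cantor's intersection theorem then produces a point $x^{\ast}\in S\cap K$, so $K\neq\{0\}$. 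Finally, setting $W := \operatorname{span}(K)$, invariance $\bigcup_i M_i(K)=K\subseteq W$ forces $M_i(K)\subseteq W$, hence $L_i(W)\subseteq W$ for every $i$; thus $W$ is a non-zero $F$-invariant subspace, and irreducibility forces $W=\R^d$. As $0\in K$, the affine span of $K$ coincides with the linear span $W=\R^d$, which completes the proof.
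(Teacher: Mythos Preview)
The paper does not supply its own proof of this statement; it is quoted from \cite{bv2} without argument. So there is nothing to compare against directly, and your proposal should be judged on its own merits.

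Your argument is correct and well organized. The key ingredients are all in place: existence of a Barabanov norm $v$ for the normalized irreducible family $\{M_i\}=\{t_0L_i\}$, the nested intersection $K=\bigcap_n\Phi^{(n)}(B)$ of iterates of the $v$-unit ball, invariance $\Phi(K)=K$ via a compactness/pigeonhole argument on the finitely many maps, inheritance of central symmetry and star-shapedness from linearity of the $M_i$, and the non-degeneracy step using an extremal trajectory on the $v$-sphere together with irreducibility to force $\operatorname{span}(K)=\R^d$. One small remark on the invariance step: your parenthetical ``$\Phi(\bigcap_n C_n)=\bigcap_n\Phi(C_n)$'' is not literally what you use (union and intersection do not commute in general); the actual argument for $K\subseteq\Phi(K)$ is the one you implicitly rely on, namely: given $y\in K$, for each $n\ge1$ write $y=M_{i_n}x_n$ with $x_n\in\Phi^{(n-1)}(B)$, pass to a subsequence with constant index $i$ and convergent $x_{n_k}\to x$, and use nestedness to get $x\in K$ and $y=M_ix\in\Phi(K)$. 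This is standard, but it is worth stating rather than hiding behind the set-identity shorthand.

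For context, the construction in \cite{bv2} is essentially the same in spirit: the ``eigenset'' there is obtained as the limit of Hutchinson iterates of a suitable convex body at the critical scaling, and the Barabanov/extremal-norm idea is what makes the limit non-trivial. Your write-up makes the role of the Barabanov identity $\max_i v(M_ix)=v(x)$ explicit and uses it cleanly to produce a point of $K$ on the unit sphere, which is the crux.
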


In other words, the attractor evolves with the parameter $t$ 
from trival to non-existent, blowing up only at the single 
threshold value $t=t_0$. An example in $\R^2$ is shown 
in Figure~\ref{fig:f3} for $F := \{L_1, L_2\}$ where 
\[
L_1 = \begin{pmatrix}  0.02 & 0 \\0 & 1 \end{pmatrix}, \qquad \qquad
L_2= \begin{pmatrix}  0.0594 & -1.98 \\ 0.495 & 0.01547 \end{pmatrix}.
\]

\begin{figure}[htb]
\begin{center}
\vskip -8mm
\includegraphics[width=8cm, keepaspectratio]{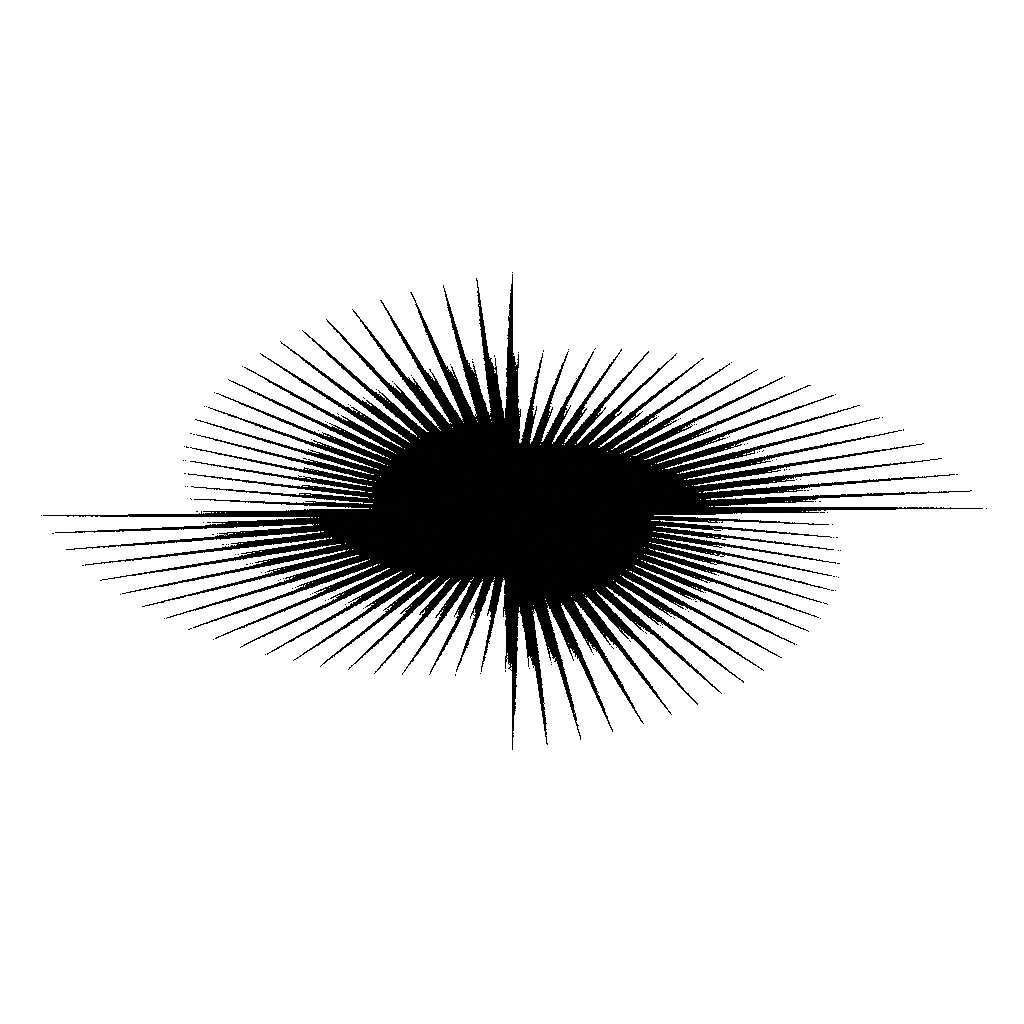}
\vskip -20mm
\caption{A transition attractor for a linear one-parameter family.}
 \label{fig:f3} 
\end{center}
\end{figure}

\section{Lower Transition Attractors, Upper Transition Attractors, and Semiattractors} \label{sec:uls}

Consider a one-parameter family 
\[
	F_t := \{ f_{(i,t)} : 1 \leq i \leq N\},\;t\in[0,\infty),
\] 
consisting of Lipschitz maps defined on 
a complete metric space $(\x,d)$.  Let $t_0$ be the threshold for the existence of an attractor
as given in Definition~\ref{def:t}.  We say that $\widehat{t_0}$ is a {\bf threshold for contractivity} if $F_t$ is contractive for all $0< t< \widehat{t_0}$ and 
\[\widehat{t_0} = \sup \{t : F_t \; \text{is contractive}\}.\]
{\it We assume throughout this section that $F_t$ has a finite contractivity threshold.}  Note that for 
$t<\widehat{t_0}$, the IFS $F_t$ has an attractor; hence \[\widehat{t_0} \leq t_0\] if a threshold $t_0$ exists. It is often the case and 
it is the interesting case when  $\widehat{t_0} = t_0$, but we know from the 
examples in Section~\ref{sec:ca} that this is not always true. Even when it is not
the case, the theorems in this section hold.  

It can be assumed without loss of generality that $\widehat{t_0}=1$. 
Indeed, we can redefine
\[\widetilde{f_{(i,t)}} := f_{(i,t t_0)} \]
and get $\widehat{t_0} =1$ for 
\[
	\widetilde{F_t} := \{ \widetilde{f_{(i,t)}} : 1 \leq i \leq N\}.
\]  
Therefore we restrict the parameter $t$ to the closed interval $[0,1]$ in this section and 
Section~\ref{sec:ut}.   
\vskip 2mm

The following conditions on the one-parameter family $F_t$ on the metric space $(\x,d)$
appear in the hypotheses of the results in this section.
\begin{enumerate}
\item[(H1)]  the map $t \mapsto f_{(i,t)}(x)\in\x$ is 
continuous for every $x\in\x$ and every $i=1,...,N$; 
\vskip 1mm
\item[(H2)] $Lip(F_t,d)  <1$ for all $t\in[0,1)$; 
\vskip 1mm
\item[(H3)] $q_i := \lim_{t\to 1^{-}} q_{i,t}$ 
exist for each $1\leq i\leq N$,
where $q_{i,t}$ denotes the unique fixed point of 
$f_{(i,t)}, \, t\in [0,1).$
Define $Q := \{q_i: 1\leq i\leq N\}$. 
\end{enumerate}

\vskip 2mm

\begin{remark} If the contractivity threshold for $F_t$ is $\widehat{t_0} =1$, then for every $t<1$ there exists admissible an metric $d_t$ such $Lip(F_t,d_t) <1$.  The somewhat stronger assumption (H2) states that there is a single metric $d$ such that $Lip(F_t,d)<1$.
\end{remark}

\begin{remark}\label{rem:H123imply}  

If (H1) and (H2) hold, then 
\begin{itemize}
\item[(a)] 
Each $F_t:\K(\x)\to\K(\x)$, $t<1$, is a Banach contraction 
in the Hausdorff metric.
\item[(b)] 
$Lip(F_1,d)\leq 1$.
In particular, the Hutchinson operator $F_1:\K(\x)\to\K(\x)$ 
is nonexpansive in the Hausdorff metric. 
\item[(c)]
The limit point $q_i \in Q$ in (H3) is a fixed point of $f_{(i,1)}$.
One should be aware, however, that $q_i$ is not necessarily 
a unique fixed point of $f_{(i,1)}$
(just think of the affine one-parameter family $f_{(i,t)}(x)=tx$;
 see also Example \ref{ex:dream3d}).
\end{itemize}
\end{remark}

\subsection{The Lower Transition Attractor and Semiattractor}

\begin{definition} \label{def:l}
The \textbf{lower transition attractor} of $F_t$ 
is the smallest (with respect to inclusion) set $A_{\bullet}$ 
which is $\mathbf{(F_{1},Q)}$-{\bf invariant}, i.e., 
$F_1(A_{\bullet})=A_{\bullet}$ and $A_{\bullet}\supseteq Q$.
(Equivalently, $A_{\bullet}$ is the smallest set with
$F_1(A_{\bullet}) \cup Q = A_{\bullet}$; see the first part 
of proof of Theorem~\ref{thm:lowA}.)
\end{definition}

\begin{definition}[\cite{lm1, MS}] \label{def:s}
Let $F$ be an IFS on a metric space $\x$. 
If the intersection is nonempty, then
the \textbf{semiattractor} of $F$ is 
\[ 
A_* := \bigcap_{x\in \x} Li(F^{(n)}(\{x\})),
\]
where  $Li(S_n)$ is the lower Kuratowski limit 
(\cite{HuPapa}) of a sequence of sets $S_n\subseteq\x$, 
i.e.,
\[
   Li(S_n) := \{ y \in \x: \text{there exist points}\quad 
   x_{n}\in S_{n} \quad\text{such that} 
   \quad x_{n} \rightarrow y\}.
\]
\end{definition}
\noindent Note that a semiattractor can be unbounded, e.g., \cite{lm1}.
The following properties of an IFS with semiattrator $A_*$ hold.

\begin{theorem}[\cite{MS}] \label{thm:semi}
If $F$ is an IFS on a complete metric space with semiattractor $A_*$, then
\begin{enumerate}
\item $F(A_*)= A_*$; moreover $A_*$ is the smallest $F$-invariant set.
\item If $F$ admits an attractor $A$ with a full basin $\x$,
 then $A_* = A$.
\end{enumerate}
\end{theorem}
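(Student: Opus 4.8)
The plan is to treat the two assertions in turn: first the invariance and minimality in part (1), then deduce part (2) from minimality together with Hausdorff convergence to the attractor. Throughout I would write $L_x := Li(F^{(n)}(\{x\}))$, so that $A_* = \bigcap_{x\in\x} L_x$, which is nonempty by the standing hypothesis in the definition. The single input I would record at the outset is that each lower Kuratowski limit $L_x$ is closed, so that $A_*$ is closed; this is the only place I invoke the general theory of set limits.

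For the inclusion $F(A_*) \subseteq A_*$ I would first show $F(L_x) \subseteq L_x$ for every $x$. Given $y \in L_x$, choose $p_n \in F^{(n)}(\{x\})$ with $p_n \to y$ and fix $f \in F$. Since $f(p_{n-1}) \in f\bigl(F^{(n-1)}(\{x\})\bigr) \subseteq F^{(n)}(\{x\})$ and $f(p_{n-1}) \to f(y)$ by continuity, we get $\operatorname{dist}(f(y), F^{(n)}(\{x\})) \to 0$, i.e. $f(y) \in L_x$. As the Hutchinson operator is monotone and $A_* \subseteq L_x$, this yields $F(A_*) \subseteq F(L_x) \subseteq L_x$ for every $x$, and intersecting over $x$ gives $F(A_*) \subseteq A_*$.

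The heart of the matter is a minimality lemma: if $B$ is any nonempty closed set with $F(B) \subseteq B$, then $A_* \subseteq B$. Indeed, fixing $b \in B$ one has $F^{(n)}(\{b\}) \subseteq B$ for all $n$ by induction, so every point of $L_b$ is a limit of points of $B$ and hence lies in $\overline B = B$; thus $A_* \subseteq L_b \subseteq B$. Applying this to $B := \overline{F(A_*)}$ — which is closed, is contained in $A_*$ (because $F(A_*) \subseteq A_*$ and $A_*$ is closed), and satisfies $F(B) \subseteq \overline{F(F(A_*))} \subseteq \overline{F(A_*)} = B$ by continuity of the maps and $F(F(A_*)) \subseteq F(A_*)$ — gives $A_* \subseteq \overline{F(A_*)}$. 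Combined with the previous paragraph this yields $\overline{F(A_*)} = A_*$, the invariance of $A_*$; this coincides with $F(A_*) = A_*$ whenever $A_*$ is compact (so that $F(A_*)$ is automatically closed), and in the general, possibly unbounded, case invariance is read for the closed Hutchinson operator. The same lemma shows $A_*$ is the smallest closed $F$-invariant set. I expect this reverse inclusion, and in particular the appearance of the closure when $A_*$ fails to be compact, to be the main obstacle: the lemma applied to $\overline{F(A_*)}$ is exactly the device that avoids an awkward and problematic direct preimage construction.

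For part (2), suppose $F$ has attractor $A$ with basin $\x$. Since $A$ is compact and $F(A) = A$, the minimality lemma immediately gives $A_* \subseteq A$. For the reverse, fix any $x$ and any $a \in A$; because $F^{(n)}(\{x\}) \to A$ in the Hausdorff metric (full basin), we have $\operatorname{dist}(a, F^{(n)}(\{x\})) \to 0$, so $a \in L_x$, and as $x$ was arbitrary, $a \in A_*$. Hence $A \subseteq A_*$, and therefore $A_* = A$.
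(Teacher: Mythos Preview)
The paper does not supply its own proof of this theorem; it is quoted from \cite{MS} and used as a black box. So there is no ``paper's approach'' to compare against.

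Your argument is correct. The two key moves --- showing each $L_x$ is $F$-subinvariant, and the minimality lemma that any nonempty closed $F$-subinvariant set $B$ contains $A_*$ via $A_*\subseteq L_b\subseteq B$ for $b\in B$ --- are exactly the right ones, and your application of the minimality lemma to $\overline{F(A_*)}$ to obtain the reverse inclusion is clean and avoids the trap of trying to build preimages. Your caveat about closure is well placed: for a finite IFS and compact $A_*$ the set $F(A_*)$ is already closed, while in the general (possibly unbounded) case the invariance should indeed be read for the closed Hutchinson operator, consistent with how the paper defines $F$ on $2^{\x}$ later in the same section. Part~(2) follows immediately from minimality plus Hausdorff convergence, as you wrote.
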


The notion of a semiattractor comes into play in \cite{KS,S}, 
where  functions that are not contrations are added 
to an IFS consisting of contractions.
This allows for the use of standard methods 
for computer drawing of the attractor of contractive IFS.
\vskip 2mm

Theorem~\ref{thm:lowA} below is a significantly more general version of those parts of \cite[Theorem 8.2] {V} pertaining to the lower transition attractor.
In addition, part (iv) of Theorem~\ref{thm:lowA}  relates the lower transition attractor of one-parameter family $F_t$ to the semiattractor of an associated IFS.  

\begin{theorem} \label{thm:lowA} Let $F_t$ be a one-parameter
family $F_t, \; t \in [0,1],$ on a complete metric space $(\x,d)$ that satisfies
(H1), (H2) and  (H3).
Then the lower transition attractor $A_{\bullet}$ always exists. Moreover
$A_{\bullet}$ obeys the following properties:
\vskip 1mm
\begin{enumerate}
\item[(i)] $A_{\bullet} = \bigcap \{A\in 2^{\x}:
F_1(A)=A \quad\text{and}\quad Q\subseteq A\}$;
\vskip 1mm
\item[(ii)] $A_{\bullet} = 
\overline{\bigcup_{n\geq 0} F_1^n(Q)}$;
\vskip 1mm
\item[(iii)] 
$A_{\bullet} = 
\overline{\bigcup_{n\geq 0} F_1^n(Q')}$,
where $Q' = \{ q_i : i \in J\}$ and $J\neq\emptyset$ is such that 
$\{i\in\{1,...,N\} : Lip(f_{(i,1)}) = 1\} 
\subseteq J \subseteq \{1,...,N\}$. 
In other words, $Q' \subseteq Q$  contains at least 
the fixed point limits of those functions $f_{(i,1)}$ 
that are not a contraction. 
\vskip 1mm
\item[(iv)] 
The lower transition attractor $A_{\bullet}$ is 
the semiattractor of any IFS of the form 
$F_1^{\flat} := F_1 \cup \{\check{q}(x): q\in Q'\}$,
where $\check{q}(x) := q$ is the constant map on $\x$.
\end{enumerate}
\end{theorem}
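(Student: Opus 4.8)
The plan is to produce a single explicit candidate and verify all four statements against it. Throughout I read the Hutchinson operator on an arbitrary subset $A\subseteq\x$ as $F_1(A)=\overline{\bigcup_i f_{(i,1)}(A)}$, so that the equation $F_1(A)=A$ already forces $A$ to be closed; this is precisely what makes the closure in (ii) appear. Set $U:=\bigcup_{n\ge 0}F_1^n(Q)$. The engine of the whole argument is Remark~\ref{rem:H123imply}(c): each $q_i$ is a fixed point of $f_{(i,1)}$, so $q_i=f_{(i,1)}(q_i)\in F_1(Q)$, whence $Q\subseteq F_1(Q)$ and, by monotonicity of $F_1$, the iterates $F_1^n(Q)$ increase with $n$. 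First I would show that $A_{\bullet}:=\overline{U}$ is $(F_1,Q)$-invariant: the inclusion $F_1(\overline U)\subseteq\overline U$ follows from continuity of the $f_{(i,1)}$ (so $f(\overline U)\subseteq\overline{f(U)}$) together with $\bigcup_i f_{(i,1)}(U)=U$, while the reverse $F_1(\overline U)\supseteq\overline{\bigcup_i f_{(i,1)}(U)}=\overline U$ is immediate. Minimality is a one-line induction: any $A$ with $F_1(A)=A\supseteq Q$ contains each $F_1^n(Q)$ and, being closed, contains $\overline U$. This simultaneously proves existence and (ii), and (i) is then formal, since $A_{\bullet}$ both lies in the family $\{A:F_1(A)=A,\ Q\subseteq A\}$ and is contained in each of its members. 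The parenthetical equivalence in Definition~\ref{def:l} is checked the same way, using $Q\subseteq F_1(Q)$ to pass between ``$F_1(A)=A$'' and ``$F_1(A)\cup Q=A$''.

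For (iii) put $A_{\bullet}':=\overline{\bigcup_{n\ge0}F_1^n(Q')}$. Since $Q'\subseteq Q$ we get $A_{\bullet}'\subseteq A_{\bullet}$ for free, and the construction above (each $q\in Q'$ is still a fixed point of some map of $F_1$) shows $A_{\bullet}'$ is $(F_1,Q')$-invariant, in particular $F_1(A_{\bullet}')=A_{\bullet}'$. To reverse the inclusion it suffices, by the minimality established in (i)--(ii), to show $A_{\bullet}'$ is $(F_1,Q)$-invariant, i.e.\ that $Q\subseteq A_{\bullet}'$. The only points in question are the $q_i$ with $i\notin J$; for these $Lip(f_{(i,1)},d)\ne 1$, hence $<1$ by Remark~\ref{rem:H123imply}(b), so $f_{(i,1)}$ is a genuine contraction on the complete space $\x$ and its unique fixed point is $q_i$. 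Choosing any $x\in A_{\bullet}'$ (nonempty, as $Q'\ne\emptyset$), invariance gives $f_{(i,1)}^{\,n}(x)\in A_{\bullet}'$ for all $n$, while the Banach fixed point theorem gives $f_{(i,1)}^{\,n}(x)\to q_i$; closedness of $A_{\bullet}'$ then yields $q_i\in A_{\bullet}'$. This dynamical recovery of the contractive fixed points is the step I expect to be the main obstacle, because it is the only place where completeness, the contraction property, and closedness must be combined, and where one must know that the limit $q_i$ from (H3) really is \emph{the} attracting fixed point of $f_{(i,1)}$.

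Finally, for (iv) I would first note that for nonempty closed $A$ the constant maps contribute $\check q(A)=\{q\}$, so $F_1^{\flat}(A)=F_1(A)\cup Q'$; thus the closed $F_1^{\flat}$-invariant sets are precisely the fixed points of $A\mapsto F_1(A)\cup Q'$, and by the equivalence in Definition~\ref{def:l} applied with seed $Q'$ the smallest of these is $A_{\bullet}'=A_{\bullet}$ (using (iii)). It remains to see that the semiattractor of $F_1^{\flat}$ exists, i.e.\ that the intersection in Definition~\ref{def:s} is nonempty: since each $\check q$ is constant, $q\in(F_1^{\flat})^{(n)}(\{x\})$ for every $n\ge 1$ and every $x$, so the constant sequence witnesses $q\in Li\big((F_1^{\flat})^{(n)}(\{x\})\big)$ for all $x$, giving $Q'\subseteq A_*$. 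With the semiattractor known to exist, Theorem~\ref{thm:semi}(1) identifies $A_*$ as the smallest $F_1^{\flat}$-invariant set, which we have just shown equals $A_{\bullet}$.
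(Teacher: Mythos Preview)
Your proof is correct and takes a genuinely different route from the paper's. The paper argues backwards from (iv): it invokes the Lasota--Myjak criterion (\cite{MS}, Theorem 6.3) to show that each $F_1^{\flat}$ admits a semiattractor $A_*'$, then quotes the self-regeneration formula from the same source to obtain the representation $A_*'=\overline{\bigcup_{n\ge 0}F_1^n(Q')}$, and finally deduces (iii) from the fact that for $i\notin J$ the singleton $\{q_i\}$ is the attractor of the sub-IFS $\{f_{(i,1)}\}\subseteq F_1^{\flat}$, hence lies in any semiattractor. You invert the logic: you build $\overline U$ by hand, verify $(F_1,Q)$-invariance and minimality directly, recover the missing $q_i$ ($i\notin J$) by explicit Banach iteration inside the closed invariant set $A_{\bullet}'$, and only at the end appeal to the one black-box result actually stated in the paper (Theorem~\ref{thm:semi}(1)) rather than the external Lasota--Myjak machinery. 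Your approach buys self-containedness and makes clear exactly where completeness and the contraction hypothesis enter; the paper's approach is shorter on the page but leans on a heavier external reference.

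One small point worth making explicit: your intermediate equality $\bigcup_i f_{(i,1)}(U)=U$ (used for the reverse inclusion $F_1(\overline U)\supseteq\overline U$) is literally true because $Q$ is finite and $F_1$ is a finite IFS, so each $F_1^n(Q)$ is a finite set and the closure in your convention for $F_1$ is vacuous on those iterates; say so, since otherwise a reader following your closure convention will only see $\bigcup_i f_{(i,1)}(U)\subseteq U$ and have to supply the missing step via $\overline{\bigcup_i f_{(i,1)}(F_1^n(Q))}=F_1^{n+1}(Q)$.
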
 
\begin{proof}
Clearly, $F_1^\flat(S)=F_1(S)\cup Q'$ for any nonempty 
$S\subseteq \x$, and $F_1(Q')\supseteq Q'$.
First note that the set $A$ is the smallest 
$F_1^{\flat}$-invariant set if and only if 
$A$ is the smallest $F_1$-invariant set which contains $Q'$.
Indeed, $A= F_1^{\flat}(A) = F_1(A)\cup Q'$ implies 
$F_1(A)\subseteq A$ and $A=A\cup Q'\supseteq Q'$.
Hence
\[
F_1(A)= F_1(A\cup Q') = F_1(A) \cup F_1(Q') 
\supseteq F_1(A)\cup Q'=A.
\]
In the reverse direction, 
if $A=F_1(A)$ and $A\supseteq Q'$, then 
$F_1^{\flat}(A) = F_1(A) \cup Q' = A\cup Q' = A$.

Second, observe that the subsystem 
$\{\check{q}: q\in Q'\} {\subseteq} F_1^{\flat}$ consists of contractions and admits a semiattractor (even attractor),
which is $Q'$. Hence, by the Lasota--Myjak criterion
(\cite{MS} Theorem 6.3), $F_1^{\flat}$ admits 
a semiattractor, denoted $A_{*}'$.
Furthermore, since $A_{*}' \supseteq Q'$ and 
$(F_1^{\flat})^n(Q') = F_1^n(Q')$, we have
$A_{*}'= \overline{\bigcup_{n\geq 0} F_1^n(Q')}$
due to the self-regeneration formula in the 
Lasota--Myjak criterion (\cite{MS} Theorem 6.3 eq. (6.9)).
In particular, the above is true for $Q'=Q$,
in which case we write $A_{*}$ for the semiattractor.

We have established the existence of 
a lower transtition attractor, which is $A_{\bullet}=A_{*}$, 
and properties (i) and (ii).

Third, we shall establish that all $A_{*}'$ are equal to $A_{*}$.
This will give the representation of $A_{\bullet}$ 
as a semiattractor of any $F_1^{\flat}$, and in turn property (iii).
Of course $A_{*}'{\subseteq} A_{*}$. 
Consider $q_i=f_{(i,1)}(q_i)$ with $i\not\in J$.
Since $\{q_i\}$ is the attractor of the subsystem 
$\{f_{(i,1)}\}\subseteq F_1^\flat$, we have $q_i\in A_*'$.
Overall $Q{\subseteq} A_{*}'$ and 
$A_{*}\subseteq A_{*}'$.
\end{proof} 

Under mild additional conditions on $F_t$, 
the lower transition attractor is compact.  See Corollary~\ref{cor:compact} and 
Remark~\ref{rem:monoid} below. These results require extending some
 concepts defined in Section~\ref{sec:intro} to infinite IFSs, e.g., \cite{Mantica, MauldinUrbanski}.  
Let $F$ be a finite or infinite IFS on a complete metric space $\x$. 
The \textbf{Hutchinson operator} on $\x$ 
induced by $F$ is the operator $F:2^{\x}\to 2^{\x}$
acting on the power set of $\x$ and given by the formula
\[F(S) := \overline{\bigcup_{f\in F} f(S)}\]
for all $S \subseteq \x$.
Note that, for a finite IFS, the closure can be dropped if $S$ is compact.
An IFS $F$ on $\x$ will be called 
{\bf compact} if $F(K)$ is compact
for every compact set $K\subseteq \x$.
Clearly, any finite IFS is compact.

Given an IFS $F$ on $\x$, the {\bf monoid}
induced by $F$ is
\begin{equation*}
  \mathbb{M}(F) := \{f_1\circ \dots\circ f_k : 
  f_1,\dots,f_k\in F, k\in\mathbb{N}\} \cup\{\id_{\x}\}.
\end{equation*}
A monoid can be treated as a new IFS. In particular, we may speak of a compact monoid.

\begin{cor} \label{cor:compact}
Let $F_t$ be as in Theorem \ref{thm:lowA} and let
 $J = \{1\leq i\leq N: Lip(f_{(i,1)}, d)= 1\}$. If  the monoid $\mathbb{M}(\{f_{(i,1)}: i\in J\})$  
is compact, then the lower transition attractor $A_{\bullet}$ of $F_t$ is compact.
\end{cor}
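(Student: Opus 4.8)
The plan is to reduce the statement to a total-boundedness estimate and then to exploit the two different mechanisms present in $F_1$: genuine contraction for the maps $f_{(i,1)}$ with $i\notin J$, and compactness of the orbit for the maps with $i\in J$. By part (iii) of Theorem~\ref{thm:lowA}, applied with $J=\{i: Lip(f_{(i,1)},d)=1\}$, we have $A_{\bullet}=\overline{\bigcup_{n\ge 0}F_1^n(Q')}$ with $Q'=\{q_i:i\in J\}$. Since $A_{\bullet}$ is closed in the complete space $\x$, it is compact as soon as $O:=\bigcup_{n\ge 0}F_1^n(Q')$ is totally bounded, so this is what I would prove. Write $F_1=G\cup C$, where $G=\{f_{(i,1)}:i\in J\}$ are the nonexpansive generators and $C=\{f_{(i,1)}:i\notin J\}$ are $c$-contractions with $c:=\max_{i\notin J}Lip(f_{(i,1)},d)<1$ (if $C=\emptyset$ then $A_{\bullet}=\mathbb{M}(G)(Q')$ is already compact by hypothesis and there is nothing to prove). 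Every point of $O$ has the form $f_w(q)$ for $q\in Q'$ and a finite word $w$ in $F_1$; grouping maximal runs of $G$-letters, I would factor $f_w=\gamma_0\circ u_1\circ\dots\circ u_r$, where $\gamma_0\in\mathbb{M}(G)$, each $u_j=h_j\circ\gamma_j$ with $h_j\in C$ and $\gamma_j\in\mathbb{M}(G)$, and $r$ is the number of contractive letters of $w$. In particular every $u_j$ is a $c$-contraction, and every $\gamma\in\mathbb{M}(G)$ is nonexpansive.

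The first technical step is a uniform bound on the ``pre-$\gamma_0$'' orbit points $\zeta:=u_1\circ\dots\circ u_r(q)$. Fixing a reference point $q^*\in Q'$ and letting $q_h$ denote the fixed point of $h\in C$, I would run the telescoping estimate
\[
d(u_j\circ\dots\circ u_r(q),q^*)\le c\,d(u_{j+1}\circ\dots\circ u_r(q),q^*)+2R^+,
\]
where $R^+:=\operatorname{diam}\big(\mathbb{M}(G)(Q')\cup\{q_h:h\in C\}\big)<\infty$; here one uses that $\gamma_j$ is nonexpansive and that $\gamma_j(q^*)\in\mathbb{M}(G)(Q')$, together with the contraction of $h_j$ toward $q_h$. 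Summing the geometric series gives $d(\zeta,q^*)\le R^+ +2R^+/(1-c)=:R_0$, a bound independent of $w$ and $q$. Applying this to every tail $u_{m+1}\circ\dots\circ u_r(q)$ shows that all such partial orbit points lie in the fixed ball $\overline{B}(q^*,R_0)$.

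Next I would build compact ``skeleta'' from the monoid hypothesis. Set $E_0:=\mathbb{M}(G)(Q')$, which is compact because $Q'$ is finite and $\mathbb{M}(\{f_{(i,1)}:i\in J\})$ is a compact IFS, and inductively $E_k:=\mathbb{M}(G)(C(E_{k-1}))$, each compact since $C$ is a finite family of continuous maps and the monoid is compact. Tracking the factorization above shows that every point of $O$ with exactly $k$ contractive letters lies in $E_k$, and that $\gamma_0\circ u_1\circ\dots\circ u_m(q^*)\in E_m$ for every admissible choice of the outer data. Now fix $\e>0$ and choose $m$ with $c^m R_0<\e/2$. For a word with $r\ge m$ contractions, replacing the tail $u_{m+1}\circ\dots\circ u_r(q)$ by $q^*$ and using that $\gamma_0\circ u_1\circ\dots\circ u_m$ is nonexpansive while its inner block is $c^m$-contractive gives $d\big(f_w(q),\,\gamma_0\circ u_1\circ\dots\circ u_m(q^*)\big)\le c^m R_0<\e/2$; hence these points lie in the $(\e/2)$-neighbourhood of the compact set $E_m$. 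The remaining points of $O$, coming from words with fewer than $m$ contractions, lie in the compact set $\bigcup_{k<m}E_k$. Covering both compact sets by finitely many $(\e/2)$-balls exhibits a finite $\e$-net for $O$, so $O$ is totally bounded and $A_{\bullet}=\overline{O}$ is compact.

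I expect the main obstacle to be exactly the interaction in the last paragraph: there is in general no single forward-invariant compact set containing $Q'$, since a contraction can push points out of any candidate set that a subsequent long $G$-word need not return, so one cannot simply intersect invariant compacta. The resolution is to decouple the two effects — a uniform metric bound produced by the contractions' fixed points (second paragraph) and a layer-by-layer compactness produced by the monoid hypothesis (third paragraph) — and to glue them only at the level of the total-boundedness criterion via the truncation estimate $c^m R_0<\e/2$. Verifying that the monoid compactness genuinely upgrades ``each $E_k$ is compact'' in the presence of arbitrarily long $G$-blocks, and that the telescoping bound is truly uniform in the word length, are the two points I would treat most carefully.
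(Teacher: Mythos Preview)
Your argument is correct and self-contained, but it takes a genuinely different route from the paper's. The paper's proof is a two-line citation: by Theorem~\ref{thm:lowA}(iv), $A_{\bullet}$ is the semiattractor of $F_1^{\flat}=F_1\cup\{\check q:q\in Q'\}$, and compactness of this semiattractor is then read off from \cite[Theorem~4.1]{S}, which treats exactly the situation of a contractive IFS augmented by nonexpansive maps whose generated monoid is compact. What you have done, in effect, is to re-prove by hand the special case of that external theorem needed here: the factorization $f_w=\gamma_0\circ u_1\circ\dots\circ u_r$, the uniform radial bound $R_0$ obtained from the telescoping inequality, and the layered compacta $E_k=\mathbb{M}(G)(C(E_{k-1}))$ together give a direct total-boundedness proof for $\bigcup_{n\ge 0} F_1^n(Q')$. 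The paper's approach is much shorter because all the work is hidden in the cited result; your approach buys transparency---one sees exactly how the compact-monoid hypothesis and the strict contraction constant $c<1$ interact---at the cost of length. One small gap worth recording: you invoke part~(iii) of Theorem~\ref{thm:lowA} with $J=\{i:\operatorname{Lip}(f_{(i,1)},d)=1\}$, but that formula requires $J\neq\emptyset$; when $J=\emptyset$ every map of $F_1$ is a strict contraction and $A_{\bullet}$ is simply the Hutchinson attractor of $F_1$, so this trivial case should be split off at the start.
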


\begin{proof}
The statement follows from Theorem \ref{thm:lowA} (iv) 
and from \cite[Theorems 4.1]{S}. 
\end{proof}

\begin{remark} \label{rem:monoid} 
If either of the following two conditions hold, then the monoid 
$\mathbb{M}(\{f_{(i,1)}: i\in J\})$ 
 is compact.
\begin{itemize}
\item
$J=\{i_{*}\}$ for some $i_{*}\in\{1,...,N\}$,
and $f_{(i_{*},1)}$ is a periodic isometry, 
cf. \cite{KS};
\item
$\x$ is proper and
all $f_{(i,1)}$, $i\in J$, have a common fixed 
point (not necessarily unique),
cf. \cite[Theorem 4.2 (ii), Lemma 2.2 item 3]{S}.  
 \end{itemize}
\end{remark}

 The compactness of the lower transition attractor $A_{\bullet}$ in
Corollary~\ref{cor:compact} cannot be inferred from (H1), (H2), and (H3) alone. 
Example~\ref{ex:five} below is a counterexample. 

\begin{example}  \label{ex:five} [A one-parameter family satisfying (H1), (H2), and (H3)
whose lower transition attractor is not compact.]

 On $\mathbb R$ let $F_t := \{g_t, f_{t} \}$, where $g_t(x) = -tx$  and $f_t(x) = -tx+t +1$.   
For $t\in (0,1)$ we have  $A_t = [-t/(1-t), 1/(1-t)]$.  In this case $A_{\bullet} = \R$.  
\end{example} 

Example~\ref{ex:dream3d} below is a $3$-dimensional example illustrating the previous results in
this section.

\begin{example} \label{ex:dream3d} 

In $\R^3$ let $F_t = \{f_{(i,t)}, 1\leq i\leq 5\}$ be the one-parameter 
affine family where $f_{(i,t)}(v) = t\, L_i(v-q_i) +q_i$, and
\[
L_1=L_2=L_3 = 
\begin{pmatrix} 
0.5 & 0 & 0 \\ 
0 & 0.5 & 0 \\ 
0 & 0 & 0.5 \\
\end{pmatrix}, 
\qquad 
L_4 = 
\begin{pmatrix} 
0 & 0 & 1 \\ 
0 & 1 & 0 \\ 
-1 & 0 & 0 \\
\end{pmatrix}, \qquad  L_5 = 
\begin{pmatrix} 
1 & 0 & 0 \\ 
0 & -1 & 0 \\ 
0 & 0 & 1 \\
\end{pmatrix}.
\]
The map $L_4$ is the rotation by $\pi/2$ about $y$-axis, and
$L_5$ is the reflection in the $xz$-plane.  The fixed points are
\[ q_i = \Big (\cos\frac{2\pi(i-1)}{3}, \, \sin\frac{2\pi(i-1)}{3}, \, 0 \Big )\mbox{ for $i=1,2,3$,} \qquad q_4=(0,1,0), \qquad 
q_5=(0,0,1),\]
where $q_1, q_2, q_3$ are the third roots of unity in the $xy$-plane.
Note that the attractor of the IFS $\{f_{(i,1)}, 1\leq i\leq 3\}$
is the Sierpi\'{n}ski triangle in the $xy$-plane
with vertices $q_1, q_2, q_3$.

For each $1\leq  i\leq 5$,  the point $q_i$ is a common fixed point of $f_{(i,t)}$ for $t\in[0,1]$. 
However, $q_i$ is not the only fixed point of $f_{(i,1)}$ for $i=4,5$. More precisely,
$f_{(4,1)}$ has the whole $y$-axis as its
set of fixed points; $f_{(5,1)}$ has the whole $xz$-plane as its set of fixed points; 
and $(0,0,0)\neq q_4, q_5$ is the only common fixed point
of $f_{(4,1)}$ and $f_{(5,1)}$.

On the left in Figure~\ref{fig:dream3d} is the attractor $A_t$ of $F_t$ 
for $t=0.8$.  By Theorem~\ref{thm:lowA} the lower transition 
attractor $A_{\bullet}$ for IFS family $F_t$ of Example~\ref{ex:dream3d} exists; it appears
on the right in  Figure~\ref{fig:dream3d}.  By Corollary~\ref{cor:compact} $A_{\bullet}$ is
compact, the relevant monoid being finite.
Figure~\ref{fig:dream3d} was generated using Mekhontsev's IFStile program \cite{IFStile}.
To draw $A_{\bullet}$ using this program we have applied part (iv) of
Theorem \ref{thm:lowA} which identifies $A_{\bullet}$ as
a semiattractor of a suitable IFS 
$F_1^{\flat} := \{f_{(1,1)},f_{(2,1)},f_{(3,1)},f_{(4,1)},f_{(5,1)},\check{q}_4,\check{q}_5\}$ 
related to $F_t$. Then the resulting IFS $F_1^{\flat}$ 
was replaced with a contractive IFS according to 
\cite[Theorem 4.1 (B)]{S}. 
\end{example} 
\vskip 2mm

\begin{figure}[htb]  
\vskip 3mm
\includegraphics[width=5cm, keepaspectratio]{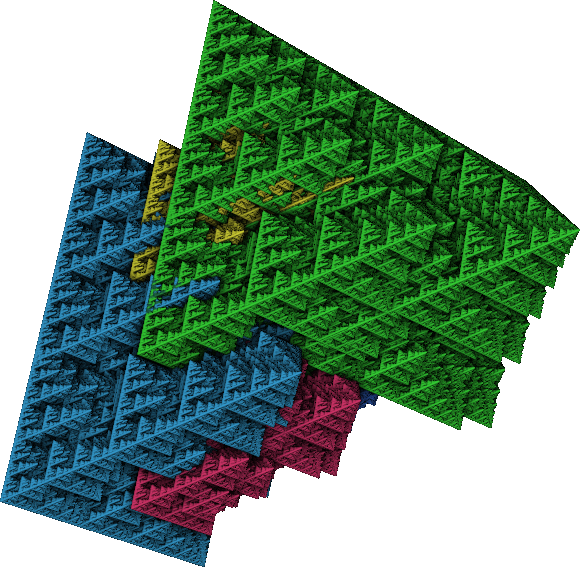} 
\hskip 10mm
\includegraphics[width=5cm, keepaspectratio]{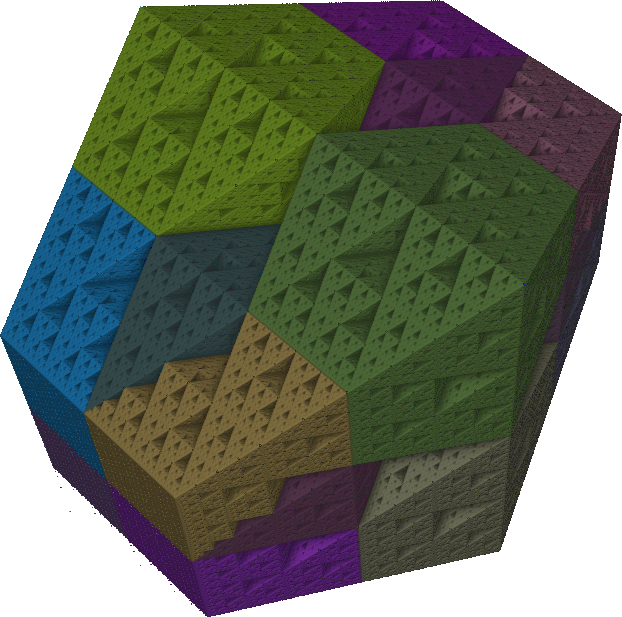}
\caption{The attractor $A_t$ for the one-parameter 
affine family $F_t$ of Example~\ref{ex:dream3d} 
for parameter value $t = .8$ 
and the lower transition attractor $A_{\bullet}$ of { $F_t$.}}
\label{fig:dream3d}
\end{figure}

\subsection{The Upper Transition Attractor}

\begin{definition} \label{def:u}  
Call a compact set $A^{\bullet}$ an {\bf upper transition attractor} 
of a one-parameter IFS family $F_t := \{f_{(1,t)}, f_{(2,t)} \dots, f_{(N,t)}\},
 \,t\in [0,1]$, if there is an increasing sequence $t_n \rightarrow 1$ such that 
\[
A^{\bullet} = \lim_{n\rightarrow \infty} A_{t_n}. 
\]
\end{definition} 

  Theorem \ref{thm:ut2}, Theorem \ref{thm:LU}, and 
Proposition~\ref{prop:sym} below are strong versions of  results
on upper transition attractors
and their relation to the lower transition attractor that were proved in \cite{V} 
 only for special cases of  one-parameter similarity families.

\begin{theorem} \label{thm:ut2}	
Assume that $\x$ is proper, $F_t$ satisfies (H2)
and, for each $1\leq i\leq N$, the map 
$[0,1]\ni t\mapsto f_{i,t}\in C(\x)$ is continuous 
 with respect to the topology of uniform convergence in $C(\x)$. 
Then $F_t$ admits at least one upper transition attractor.
\end{theorem}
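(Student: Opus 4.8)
The plan is to reduce the statement to a single uniform boundedness estimate and then close with a compactness (Blaschke selection) argument. First, since $\x$ is proper it is complete, so by (H2) together with Hutchinson's Theorem~\ref{thm:H} the contractive IFS $F_t$ has a unique attractor $A_t\in\K(\x)$ for every $t\in[0,1)$. Fixing a base point $x_0$, an upper transition attractor will exist as soon as I can produce a single radius $R<\infty$ and an increasing sequence $t_n\to 1$ with $A_{t_n}\subseteq\overline{B}(x_0,R)$: the ball $\overline{B}(x_0,R)$ is compact by properness, so Blaschke's selection theorem yields a subsequence of $(A_{t_n})$ converging in the Hausdorff metric $h$ to a nonempty compact set $A^{\bullet}$, and passing to a subsequence preserves monotonicity of $t_n$.

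The key step, and the one I expect to be the main obstacle, is the uniform radius bound. The naive estimate $R_t:=\max_{x\in A_t}d(x_0,x)\le D_t/(1-Lip(F_t,d))$ is useless, since $Lip(F_t,d)$ may tend to $1$ as $t\to 1$; this is exactly what happens, and what destroys the upper transition attractor, in Example~\ref{ex:five}. The remedy is to exploit the uniform-convergence hypothesis \emph{globally} rather than only near $t=1$. Because $t\mapsto f_{(i,t)}$ is continuous on the compact interval $[0,1]$ into $C(\x)$ with the topology of uniform convergence, its image is a compact, hence bounded, subset of $C(\x)$: covering $[0,1]$ by finitely many subintervals on each of which the uniform oscillation of $f_{(i,\cdot)}$ is at most $1$ and chaining across them shows that
\[
\Delta:=\max_i\;\sup_{s,t\in[0,1]}\;\sup_{x\in\x} d\bigl(f_{(i,t)}(x),f_{(i,s)}(x)\bigr)<\infty .
\]
This finite global displacement bound is precisely what fails in Example~\ref{ex:five}, where $\sup_x d(g_t(x),g_s(x))=\infty$, so that the parameter path is not even bounded in the uniform metric.

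With $\Delta$ in hand I would anchor the estimate at the fixed parameter $t=0$, where $s_0:=Lip(F_0,d)<1$ by (H2) and is independent of $t$. Writing $D_0:=\max_i d(x_0,f_{(i,0)}(x_0))$, let $x_t\in A_t$ attain the maximal distance $R_t$ from $x_0$; since $A_t=\bigcup_i f_{(i,t)}(A_t)$, we may write $x_t=f_{(i,t)}(y_t)$ with $y_t\in A_t$, so $d(x_0,y_t)\le R_t$. The estimate then becomes
\[
R_t=d\bigl(x_0,f_{(i,t)}(y_t)\bigr)\le d\bigl(x_0,f_{(i,0)}(y_t)\bigr)+\Delta\le D_0+s_0\,d(x_0,y_t)+\Delta\le D_0+s_0R_t+\Delta,
\]
whence $R_t\le (D_0+\Delta)/(1-s_0)=:R$ for every $t\in[0,1)$, with $R$ independent of $t$. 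Thus every attractor $A_t$ lies inside the compact ball $\overline{B}(x_0,R)$.

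Finally I would carry out the compactness argument announced in the first paragraph: choose any increasing sequence $t_n\to 1$, apply Blaschke selection inside $\overline{B}(x_0,R)$ to pass to a subsequence along which $A_{t_n}\to A^{\bullet}$ in the Hausdorff metric with $A^{\bullet}$ nonempty and compact, and observe that the subsequence remains increasing. By Definition~\ref{def:u}, $A^{\bullet}$ is an upper transition attractor of $F_t$. I expect the only delicate point to be the boundedness lemma; once the global displacement bound $\Delta$ is isolated via compactness of the parameter path in the uniform topology, the remaining steps are routine. Note that neither (H1) nor (H3) is needed for this argument, only (H2), properness, and the uniform continuity in $t$.
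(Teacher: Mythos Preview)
Your proof is correct and follows the same underlying idea as the paper's, but the execution differs in a way worth noting. The paper first isolates an abstract lemma (Lemma~\ref{lem:seq}): if $(f_n)$ is any sequence of Banach contractions on a complete metric space that converges uniformly, then the set of fixed points $\{x_n\}$ is bounded. It then argues by contradiction, applying this lemma not to the maps $f_{(i,t)}$ themselves but to the Hutchinson operators $F_{t_n}:\K(\x)\to\K(\x)$, which are contractions in the Hausdorff metric by (H2) and converge uniformly by the hypothesis; their fixed points are the attractors $A_{t_n}$. Properness and Blaschke selection are left implicit in the contradiction setup.

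You instead work directly on $\x$, anchor at the fixed parameter $t=0$ (using $s_0=Lip(F_0,d)<1$ rather than the Lipschitz constant of some $F_{t_{n_0}}$ chosen near the limit), and obtain an explicit uniform radius $R=(D_0+\Delta)/(1-s_0)$ for \emph{all} $t\in[0,1)$ without a contradiction argument. The global displacement bound $\Delta<\infty$ plays the same role as the paper's choice of $n_0$ with $d_{\sup}(f_n,f)<1$, and your chaining/uniform-continuity justification for $\Delta<\infty$ is sound. Your route is a bit more elementary and gives a quantitative bound; the paper's route is more modular (the lemma is reusable) and lifts the argument cleanly to the hyperspace $\K(\x)$. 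Both reach the same Blaschke selection endgame.
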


To prove this theorem we need the following lemma.

\begin{lemma} \label{lem:seq}
Assume that $(f_n)$ is a sequence of contractions
on a complete metric space $(\x,d)$, uniformly convergent 
to some function $f$. Then the set of fixed points 
of maps $f_n$, $n\in\N$, is bounded.
\end{lemma}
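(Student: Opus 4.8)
The plan is to exploit the uniform convergence so as to reduce the whole estimate to a single, fixed member of the sequence, thereby sidestepping the fact that the individual contraction ratios $Lip(f_n,d)$ may tend to $1$. Write $p_n$ for the unique fixed point of the contraction $f_n$ (which exists by the Banach fixed point theorem), and set $\epsilon_n := \sup_{x\in\x} d(f_n(x),f(x))$. Uniform convergence means $\epsilon_n\to 0$; in particular $\epsilon_n<\infty$ for all large $n$, so I may fix an index $N$ with $\epsilon_N\le 1$. The finitely many fixed points $p_1,\dots,p_{N-1}$ form a bounded set automatically, so the real task is to bound $\{p_n : n\ge N\}$.

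The key step is to control the displacement $x\mapsto d(x,f(x))$ of the limit map $f$ \emph{from below}, using only the single contraction $f_N$. Writing $c:=Lip(f_N,d)<1$ and $p:=p_N$, the triangle inequality together with $f_N(p)=p$ gives $d(f_N(x),x)\ge d(x,p)-d(f_N(x),f_N(p))\ge (1-c)\,d(x,p)$, and hence, for every $x\in\x$,
\[
d(f(x),x)\ \ge\ d(f_N(x),x)-d(f_N(x),f(x))\ \ge\ (1-c)\,d(x,p)-\epsilon_N.
\]
The whole point is that $1-c$ here is a \emph{fixed} positive constant: even though the ratios of the remaining $f_n$ may approach $1$, this one inequality already forces the displacement of $f$ to grow at least linearly as one moves away from $p$.

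It then remains to note that each $p_n$ is an approximate fixed point of $f$. Indeed, since $f_n(p_n)=p_n$, we have $d(f(p_n),p_n)=d(f(p_n),f_n(p_n))\le\epsilon_n\le 1$ for $n\ge N$. Substituting $x=p_n$ into the displayed inequality gives $(1-c)\,d(p_n,p)-\epsilon_N\le 1$, that is, $d(p_n,p)\le (1+\epsilon_N)/(1-c)$, a bound independent of $n$. Together with the finite set $\{p_1,\dots,p_{N-1}\}$ this places all the fixed points inside a single ball, proving the lemma.

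I expect the only genuine subtlety to be the temptation to bound $p_n$ directly by means of $f_n$ itself, say via $d(p_n,a)\le d(f_n(a),a)/(1-Lip(f_n,d))$; this estimate is useless precisely when $Lip(f_n,d)\to 1$, and indeed the naive approach fails (one can even build contractions with fixed points escaping to infinity once the \emph{uniform} closeness is dropped). The remedy is to anchor the estimate to one fixed member $f_N$ and to transfer the resulting growth of the displacement to $f$ through the uniform bound $\epsilon_N$. For this reason it is essential to use uniform convergence at the very start, merely to guarantee that $\epsilon_N<\infty$, since on an unbounded space a contraction need not be uniformly close to $f$ at all.
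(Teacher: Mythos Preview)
Your argument is correct and is essentially the paper's own proof: both anchor the estimate to a single contraction $f_N$ (the paper's $f_{n_0}$), use the triangle inequality through $f$ to compare $f_n$ with $f_N$, and arrive at the identical bound $d(p_n,p_N)\le(\epsilon_n+\epsilon_N)/(1-\on{Lip}(f_N))$; your framing via the displacement $x\mapsto d(f(x),x)$ is only a cosmetic repackaging of the same inequality chain. One small wording slip: you need to choose $N$ so that $\epsilon_n\le 1$ for \emph{all} $n\ge N$ (not merely $\epsilon_N\le 1$), since you later invoke $\epsilon_n\le 1$; this is of course immediate from $\epsilon_n\to 0$.
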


\begin{proof}
 Let $d_{\sup}(f,g):=\sup_{x\in \x}d(f(x),g(x))$ for $f,g:\x\to\x$. For $n\in\N$, let $x_n$ be the fixed point of $f_n$.
Fix an $n_0\in\N$ such that $d_{\sup}(f_n,f)<1$ for all $n\geq n_0$. 
For every $n\geq n_0$, we have
\[\begin{aligned}
d(x_n,x_{n_0}) &\leq d(f_{n}(x_n),f_{n_0}(x_n))+
d(f_{n_0}(x_n),f_{n_0}(x_{n_0})) \\ 
&\leq
d_{\sup}(f_n,f_{n_0})+\on{Lip}(f_{n_0})d(x_n,x_{n_0}) \\ &\leq 
d_{\sup}(f_n,f)+d_{\sup}(f,f_{n_0})+\on{Lip}(f_{n_0})d(x_n,x_{n_0}).
\end{aligned}\]
Hence
\[
d(x_n,x_{n_0})\leq 
\frac{d_{\sup}(f_n,f)+d_{\sup}(f,f_{n_0})}{1-\on{Lip}(f_{n_0})}
\leq \frac{2}{1-\on{Lip}(f_{n_0})}.
\]
Therefore
\[
\on{diam}\{x_n:n\in\N\}\leq 2\max \Big\{ d(x_1,x_{n_0}),..., 
d(x_{n_0-1},x_{n_0}),\frac{2}{1-\on{Lip}(f_{n_0})}\Big\}<\infty.
\]
\end{proof}

\begin{proof}[Proof of Theorem \ref{thm:ut2}] 
Suppose that the assertion does not hold. Then we can find 
a convergent sequence $(t_n)\subset[0,1]$ so that 
the family $A_{t_n}$, $n\in\N$, of attractors of $F_{t_n}$, 
are not all included in some bounded set. In other words, 
the set $\{A_{t_n}:n\in\N\}$ is not bounded in $\K(X)$. 
Let $t=\lim_{n\to\infty}t_n$. Now observe that 
for every compact set $K\in\K(X)$, we have
\[\begin{aligned}
h({F_{t_n}}(K),{F_{t}}(K))&= 
h\Big(\bigcup_{i=1}^{N} f_{(i,t_n)}(K),
\bigcup_{i=1}^{N} f_{(i,t)}(K)\Big) \\ &
\leq \max\{h(f_{(i,t_n)}(K),f_{(i,t)}(K)): i=1,...,N\} \\ &
\leq \max\{d_{\sup}(f_{(i,t_n)},f_i):i=1,...,N\}
\end{aligned}\]
Hence
\[
 \sup\{h({F_{t_n}}(K),{F_{t}}(K)):K\in\K(X)\}
\leq \max\{d_{\sup}(f_{(i,t_n)},f_i):i=1,...,N\}\to 0.
\]
Therefore the assumptions of  Lemma~\ref{lem:seq} are satisfied 
(for a family of Hutchinson operators) and the family 
$\{A_{\F_{t_n}}:n\in\N\}$ is bounded in $\K(X)$, a contradiction.
\end{proof}

 The existence of an upper transition attractor in Theorem~\ref{thm:ut2} cannot be inferred from (H1), (H2), and (H3) alone; see Example~\ref{ex:five}.  Neither can it be inferred from (H1) and the assumption that all maps in $F_t$ are contractions
 for all $t\in [0,1]$; see Example~\ref{ex:nadler} below.

\begin{example}\label{ex:nadler}
Motivated by the construction in \cite[Example 1]{Nadler}, 
we will construct a one-parameter family of IFSs $F_t$, $t\in[0,1]$,
with the following properties:
\begin{enumerate}
\item[(a)] $F_t$ satisfies (H1);
\item[(b)]  for all $t\in [0,1]$ all maps in $F_t$ are contractions, in particular
$F_t$ satisfies (H2);
\item[(c)] $F_t$ has no upper transition attractor.
\end{enumerate} 

Let $\ell^1$ be the
Banach space of absolutely convergent sequences of real numbers.  We will construct a
function ${f}_{t}:\ell_1\to\ell_1$ such that the one-parameter family $F_t  =\{f_t\}$,
consisting of a single function, will satisfy the properties (a), (b), (c) above. 
For each $t\in[0,1]$ the function $f_t$ will have the form
\[
{f}_{t}(x) = \begin{cases} t\frac{{\phi}_{t}(x)}{{\phi}_{t}(z_t)} \cdot z_t+(1-t)z_t \quad \text{if} \;\;  t<1 \\ 
\mathbf{0} \quad \text{if} \; \; t=1,
\end{cases}\]
where $\mathbf{0}$ is the sequence of zeros and the linear functional ${\phi}_{t}:\ell_1\to\R$, $t\in[0,1]$,
has the form
\begin{equation*} 
{\phi}_{t}(x) =\begin{cases} \alpha(t)x_{n_t}+\beta(t)x_{n_t+1} \quad \text{if} \; \; t<1 \\
0 \quad \text{if} \; \; t=1, \end{cases}
\end{equation*}
where we use the notation $x=(x_n) \in \ell^1$.  It remains to define 
$z_t \in \ell^1$ for each $t\in[0,1]$, the functions $\alpha,\beta:[0,1)\to[0,1]$, and the integer $n_t$ for all
$t<1$, and to show that properties (a), (b), (c) hold for $F_t$.

To define $\alpha,\beta$ and $n_t$, choose any increasing sequence $(a_n)$ of real numbers 
tending to $1$ and such that $a_0=0$. For each $t\in[0,1)$, find 
$n_t\in\{0,1,2,...\}$ so that $a_{n_t}\leq t<a_{n_t+1}$. Clearly, for any $t\in[0,1)$, we have that $t\in [a_n,a_{n+1})$ if and only if $n_t=n$. Now choose maps $\alpha,\beta,c:[0,1)\to[0,1]$ 
 which satisfy the following conditions:
\begin{itemize}
\item[(i)] $\alpha,\beta,c$ are right continuous on $[0,1)$;
\item[(ii)] $\alpha,\beta,c$ are continuous on each interval $(a_n,a_{n+1})$, $n\in\N\cup\{0\}$;
\item[(iii)] for any $n\in\N\cup\{0\}$, we have that
\begin{itemize}
\item[(iiia)] $\alpha(a_n)=1$ and $\lim_{t\to a_{n+1}^-}\alpha(t)=0$;
\item[(iiib)] $\beta(a_n)=0$ and $\lim_{t\to a_{n+1}^-}\beta(t)=1$;
\item[(iiic)] $c(a_n)=1$ and $\lim_{t\to a_{n+1}^-}c(t)=0$;
\end{itemize}  
\item[(iv)] $\max\{\alpha(t),\beta(t)\}=1$ for all $t\in[0,1)$;
\item[(v)] $c(t)\alpha(t)+(1-c(t))\beta(t)>t$ for all $t\in[0,1)$.
\end{itemize}
The choice of maps $\alpha,\beta,c$ is possible. For example, $\alpha$ can be constant $1$ on each interval $[a_n,\frac{1}{2}(a_n+a_{n+1})]$ and affine on $[\frac{1}{2}(a_n+a_{n+1}),a_{n+1})$. Similarly $\beta$ can be affine on each interval $[a_n,\frac{1}{2}(a_n+a_{n+1})]$ and constant $1$ on $[\frac{1}{2}(a_n+a_{n+1}),a_{n+1}))$. Finally, $c$ can be constant $1$ on $[a_n,\frac{1}{2}(a_n+a_{n+1})-\xi]$, constant $0$ on $[\frac{1}{2}(a_n+a_{n+1})+\xi,a_{n+1})$ and affine on $[\frac{1}{2}(a_n+a_{n+1})-\xi,\frac{1}{2}(a_n+a_{n+1})+\xi]$, where $\xi>0$ is sufficiently small (for example, $\xi=\frac{1}{2}(a_{n+1}-a_n)(1-a_{n+1})$).   Graphs of $\alpha,\beta$ and $c$ are illustrated in Figure~\ref{abc}.

\begin{figure}[htb]  
\vskip 3mm
\includegraphics[width=7cm, keepaspectratio]{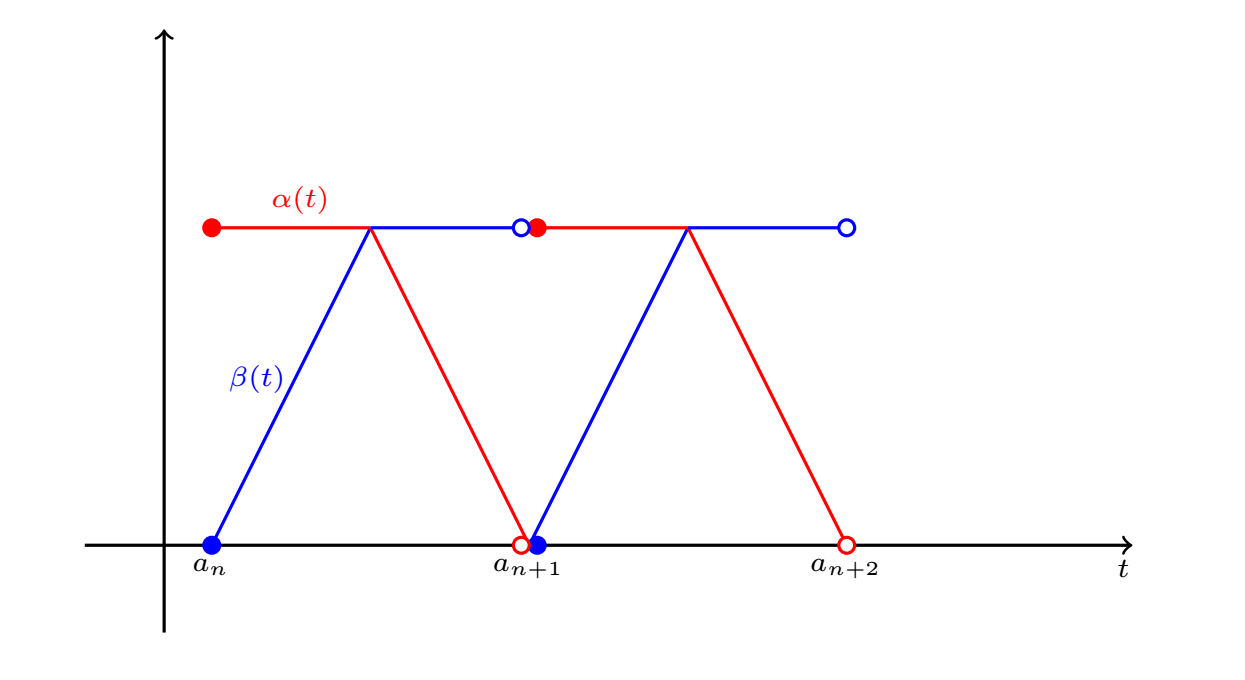} 
\hskip -10mm
\includegraphics[width=7cm, keepaspectratio]{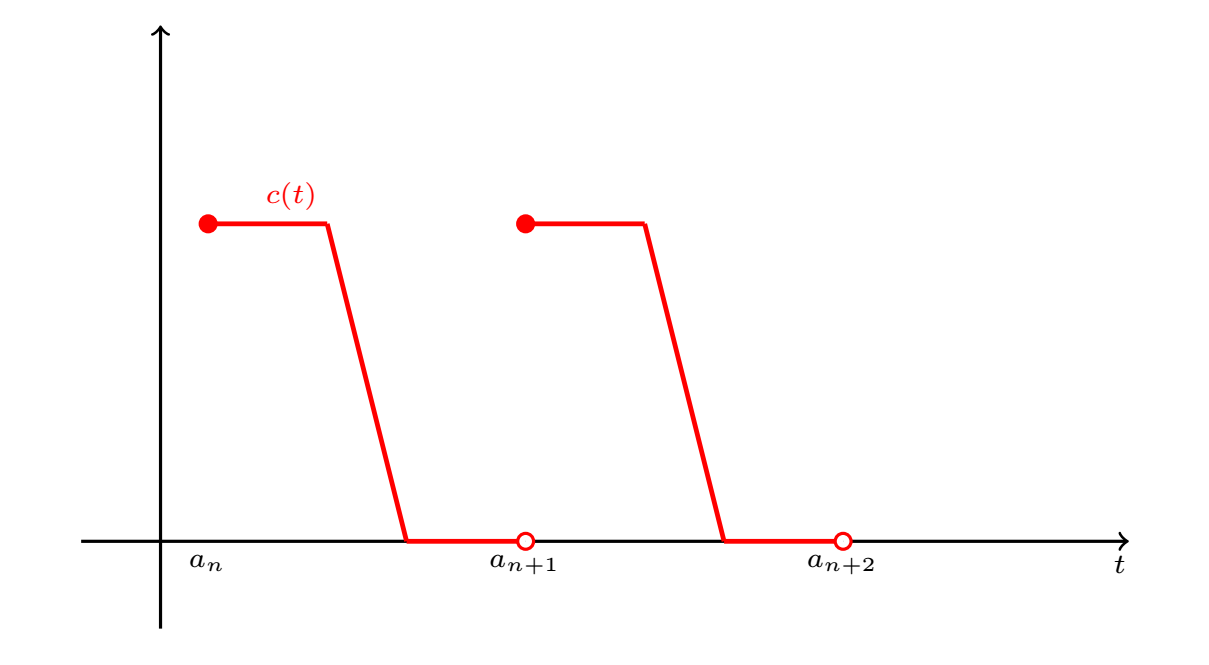}
\caption{The graphs of $\alpha,\beta$ and $c$}
\label{abc}
\end{figure}

To define $z_t$, use (iv) and the classical correspondence between linear functionals on $\ell_1$ and the space $\ell_\infty$ to obtain
\begin{equation}\label{filjuly1}
||{\phi}_{t}||=  \Big|\Big|\Big(0,...,0,\alpha(t),\beta(t),
0,...\Big)\Big|\Big|_\infty=\max\{|\alpha(t)|,|\beta(t)|\}=1
\end{equation}
for all $t\in[0,1)$.
Now fix any $x\in\ell_1$ and define the map 
$g_x:[0,1]\to \R$ by
\[
g_x(t):={\phi}_{t}(x).
\]
Next we show that $g_x$ is continuous. By (i) and (ii) we see that $g_x$ is right continuous on the whole interval $[0,1)$ and continuous on each interval $(a_n,a_{n+1})$, $n\in\N\cup\{0\}$. Using (iiia) and (iiib), for $n\geq 1$ we have
\[
\lim_{t\to a_n^{-}}g_x(t)= 
\lim_{t\to a_n^{-}} 
\Big(\alpha(t)x_{n-1}+\beta(t)x_{n}\Big)=x_n=g_x(a_n),
\] 
which gives left continuity at $a_n$ and, in consequence, its continuity at $a_n$. Finally, we observe that $g_x$ is continuous at $1$:
\[
0\leq \lim_{t\to 1}|g_x(t)|\leq 
\lim_{n\to\infty}(|x_n|+|x_{n+1}|)=0={\phi}_{1}(x).
\]
For $t\in[0,1)$ define 
\begin{eqnarray*}
z_t:=c(t)e_{n_t}+(1-c(t))e_{n_t+1}\in\ell_1 
\end{eqnarray*}
where $e_n$ is the $n$-th unit vector in $\ell_1$.
Note that, by (v), we have that
\begin{equation}\label{filmay1}
{\phi}_{t}(z_t)=\alpha(t)c(t)+(1-c(t))\beta(t)>t.
\end{equation}

We now verify statement (a), that 
\[
[0,1]\ni t\mapsto {f}_{t}(x)
\]
is continuous.
As was shown for $g_x$, using (i), (ii) and (iii) we see that the map
\[
[0,1)\ni t\to \phi_t(z_t)\in\R
\]
is continuous.
The continuity of $[0,1)\ni t\mapsto {f}_{t}(x)$ 
follows easily from the continuity of the maps
$t\mapsto {\phi}_{t}(x)$ and $t\mapsto \phi_t(z_t)$ for $t\in[0,1)$, which were observed earlier, and the continuity of the map $[0,1)\ni t\mapsto z_t\in\ell_1$ that can be proved in a similar way.
Furthermore, since ${\phi}_{t}(x)\to {\phi}_{1}(x)=0$ and 
$(1-t)\to 0$ when $t\to 1$, we have
\[
||{f}_{t}(x)||\leq 
\frac{t}{{\phi}_{t}(z_t)}\cdot|{\phi}_{t}(x)|||z_t||+
(1-t)||z_t||\leq 
|{\phi}_{t}(x)|+(1-t)\to 0,
\]
when $t\to 1$. Hence the map $[0,1]\ni t\mapsto {f}_{t}(x)$ is also continuous at $1$. 

We next verify statement (b), that
 ${f}_{t}$ is {a contraction} for $t<1$.   We have
\[
||{f}_{t}(x)-{f}_{t}(y)||=
\frac{t}{{\phi}_{t}(z_t)}|{\phi}_{t}(x)-{\phi}_{t}(y)|
\cdot ||z_t||\leq
\frac{t}{{\phi}_{t}(z_t)}||{\phi}_{t}||\cdot||x-y||.
\]
Moreover $\frac{t}{{\phi}_{t}(z_t)}<1$ from \eqref{filmay1} and $\|{\phi}_{t}\|=1$ from \eqref{filjuly1}.

It only remians to check property (c), that $F_t$ has no upper transition attractor.
We have
\[
{f}_{t}(z_t)= t\frac{{\phi}_{t}(z_t)}{{\phi}_{t}(z_t)}z_t
+(1-t)z_t=z_t
\]
for $t\in[0,1)$.
Therefore $z_t$ is a unique fixed point of ${f}_{t}$; in particular, $A_{\F_t}=\{z_t\}$. 
On the other hand, the unique fixed point of ${f}_{1}$ 
is clearly the zero sequence $\mathbf{0}$, so $\{\mathbf{0}\}$ is the only candidate for an upper transition attractor of $F_t$ (see Theorem \ref{thm:LU}(i) below). However, for $t<1$ we have
\[
  ||z_t-\mathbf{0}||=||z_t||=1.
\]
Thus all three properties (a),(b),(c) of our example have been verified.
\end{example}

For Theorem~\ref{thm:LU} below and in Section \ref{sec:ut} we will need the following
technical lemma.

\begin{lemma}\label{fildec5}
Let $\x$ be a metric space and let {$f_t,\;t\in[0,1]$, 
be a family of nonexpansive selfmaps of $\x$ 
such that for every $x\in\x$, the map
$[0,1]\ni t\mapsto f_t(x)$ is continuous. Then 
for every nonempty and compact set} $D\subseteq \x$,
\begin{equation*}
\forall_{\varepsilon>0}\; \exists_{\delta>0}\; 
\forall_{s,t\in[0,1]}\;\; (|s-t|<\delta\;\Rightarrow\;
\sup_{x\in D}\;\; d(f_s(x),f_t(x))\leq\varepsilon).
\end{equation*}
In particular, for every nonempty and 
compact set $D\subseteq \x$, the map 
\[
[0,1]\ni t\mapsto f_t(D)\in\K(\x)
\]
is uniformly continuous.
\end{lemma}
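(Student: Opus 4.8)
The plan is to run a standard ``three-$\varepsilon$'' argument, exploiting the equicontinuity of the family $\{f_t\}$ in the space variable (furnished by nonexpansiveness) together with the continuity of each parameter curve $t\mapsto f_t(x)$ on the compact interval $[0,1]$. Fix $\varepsilon>0$. First I would use compactness of $D$ to choose a finite $\tfrac{\varepsilon}{4}$-net $x_1,\dots,x_m\in D$, so that every $x\in D$ lies within $\tfrac{\varepsilon}{4}$ of some $x_j$. The point of the net is that nonexpansiveness transfers this closeness through every map simultaneously: $d(f_t(x),f_t(x_j))\leq d(x,x_j)<\tfrac{\varepsilon}{4}$ for all $t\in[0,1]$ at once.

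Next, for each anchor $x_j$ the map $[0,1]\ni t\mapsto f_t(x_j)$ is continuous on a compact interval, hence uniformly continuous; this yields $\delta_j>0$ with $d(f_s(x_j),f_t(x_j))<\tfrac{\varepsilon}{4}$ whenever $|s-t|<\delta_j$. Setting $\delta:=\min_{1\leq j\leq m}\delta_j>0$, I would then estimate, for arbitrary $x\in D$ and $|s-t|<\delta$, by choosing a net point $x_j$ with $d(x,x_j)<\tfrac{\varepsilon}{4}$ and splitting
\[
d(f_s(x),f_t(x))\leq d(f_s(x),f_s(x_j))+d(f_s(x_j),f_t(x_j))+d(f_t(x_j),f_t(x)).
\]
The outer two terms are each $<\tfrac{\varepsilon}{4}$ by nonexpansiveness and the middle term is $<\tfrac{\varepsilon}{4}$ by the choice of $\delta$, so the sum is $<\tfrac{3\varepsilon}{4}\leq\varepsilon$. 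Taking the supremum over $x\in D$ yields the displayed statement. No completeness of $\x$ is needed, and each $f_t$ is continuous (being nonexpansive), so $f_t(D)\in\K(\x)$ is indeed compact.

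For the ``in particular'' clause, I would deduce uniform continuity of $t\mapsto f_t(D)$ in the Hausdorff metric $h$ directly from the bound just obtained. Given $|s-t|<\delta$, every point $f_s(x)\in f_s(D)$ lies within $\varepsilon$ of the point $f_t(x)\in f_t(D)$, and symmetrically; hence both one-sided Hausdorff distances are $\leq\varepsilon$, so $h(f_s(D),f_t(D))\leq\varepsilon$.

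The argument is essentially routine, so I do not anticipate a genuine obstacle. The one place demanding care is the order of quantifiers: the net $\{x_j\}$ must be selected \emph{before} the constants $\delta_j$, so that the single $\delta=\min_j\delta_j$ controls all of $D$ uniformly. This is precisely where compactness of $D$ (finiteness of the net) combines with nonexpansiveness (uniform control off the net) to upgrade pointwise-in-$t$ continuity into the claimed joint uniform continuity.
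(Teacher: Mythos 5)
Your proof is correct and follows essentially the same route as the paper: both reduce to a finite net in $D$ via compactness, transfer control off the net using nonexpansiveness, and handle the finitely many anchor points by exploiting compactness of $[0,1]$ (you by citing Heine--Cantor for each curve $t\mapsto f_t(x_j)$ and taking a minimum of the $\delta_j$, the paper by running the finite-subcover argument explicitly). The concluding Hausdorff-metric step is also the same.
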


\begin{proof}
Assume first that the set $D$ is finite. Fix $\varepsilon>0$. 
Then for every $t\in[0,1]$, we can find $\delta_t>0$ such that 
for every $s\in[0,1]$ with $|s-t|<\delta_t$ we have
\begin{equation}\label{filaa1}
\sup_{x\in D}\;d(f_t(x),f_s(x))<\frac{\varepsilon}{2}.
\end{equation}
The choice of $\delta_t$ is possible since $D$ is finite and 
the map $t\mapsto f_t(x)$ is continuous for every $x\in D$.\\
Since $[0,1]$ is compact, we can choose a finite subcover of the open cover 
$\Big(t-\frac{\delta_t}{2},t+\frac{\delta_t}{2}\Big)$, $t\in[0,1]$. 
Let $\Big(t_i-\frac{\delta_{t_i}}{2},t_i+\frac{\delta_{t_i}}{2}\Big)$, 
$i=1,...,k,$ be this subcover and choose
\[
\delta :=\frac{1}{2}\min\{\delta_{t_i}:i=1,...,k\}.
\]
Now let $s,t\in[0,1]$ be such that $|s-t|<\delta$. By the choice of 
$t_1,...,t_k$, we can find $i=1,...,k$ so that 
\[
|t-t_i|<\frac{\delta_{t_i}}{2}.
\]
Then also
\[
|s-t_i|\leq|s-t|+|t-t_i|<\delta+\frac{\delta_{t_i}}{2}\leq \delta_{t_i}.
\] 
Hence by (\ref{filaa1}), for every $x\in D$, we have
\[
d(f_s(x),f_t(x))\leq d(f_s(x),f_{t_i}(x))+d(f_{t_i}(x),f_t(x))
<\frac{\varepsilon}{2}+\frac{\varepsilon}{2}=\varepsilon
\]
and
\[
\sup_{x\in D}d(f_s(x),f_t(x))\leq\varepsilon.
\]
Now assume that $D$ is nonempty and compact. 
Take any $\varepsilon>0$, and find a finite set 
$D' \subseteq D$ so that the Hausdorff distance 
$h(D',D)<\frac{\varepsilon}{3}$. By previous considerations, 
there exists $\delta>0$ such that if $|s-t|<\delta$, then
\[
\sup_{x\in D'}\;\;d(f_s(x),f_t(x))\leq\frac{\varepsilon}{3}.
\]
If $x\in D$, then we can find $x'\in D'$ so that 
$d(x,x')<\frac{\varepsilon}{3}$, and thus
\[
d(f_t(x),f_s(x))\leq d(f_t(x),f_t(x'))+d(f_t(x'),f_s(x'))
+d(f_s(x'),f_s(x))\leq 
2d(x,x')+\frac{\varepsilon}{3}\leq\varepsilon.
\]
Therefore
\[
\sup_{x\in D}\;\;d(f_s(x),f_t(x))\leq{\varepsilon}.
\]
\end{proof}

In what follows, we denote the Hausdorff distance by $h$.

\begin{theorem} \label{thm:LU}
Let $F_t$, $t\in[0,1]$, satisfy (H1) and (H2).
If $A^{\bullet}$ is any upper transition attractor of $F_t$, then 
\begin{enumerate}
\item[(i)] $F_1(A^{\bullet}) = A^{\bullet}$.
\end{enumerate}
If, in addition, $F_t$ satisfies (H3), then 
\begin{enumerate}
\item[(ii)] $A^{\bullet}\supseteq Q$;
in particular $A^{\bullet}\supseteq A_{\bullet}$,
\end{enumerate}
where $A_{\bullet}$ is the lower transition attractor of $F_t$
and $Q$ is the set of limit fixed points from (H3).
\end{theorem}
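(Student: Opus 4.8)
The plan is to transfer the exact invariance $F_{t_n}(A_{t_n})=A_{t_n}$ enjoyed by each attractor to the limit, exploiting that $F_1$ is nonexpansive on $\K(\x)$ and that the maps $f_{(i,t)}$ vary continuously in $t$, uniformly on a fixed compact set.

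For part (i), write $A^{\bullet}=\lim_n A_{t_n}$ for an increasing sequence $t_n\to 1$, where $F_{t_n}(A_{t_n})=A_{t_n}$ for each $n$, since $t_n<1$ and $F_{t_n}$ is a Banach contraction on $\K(\x)$ by Remark~\ref{rem:H123imply}(a). The first step is to produce a single compact set containing every $A_{t_n}$: since $A_{t_n}\to A^{\bullet}$ in the Hausdorff metric, the family $\{A_{t_n}:n\in\N\}\cup\{A^{\bullet}\}$ is a compact subset of $\K(\x)$, and the union of a compact family of compact sets is compact (here using completeness of $\x$); call this compact union $D$. I would then estimate, via the triangle inequality,
\[
h(A_{t_n},F_1(A^{\bullet}))=h(F_{t_n}(A_{t_n}),F_1(A^{\bullet}))\le h(F_{t_n}(A_{t_n}),F_1(A_{t_n}))+h(F_1(A_{t_n}),F_1(A^{\bullet})).
\]
The second summand is at most $h(A_{t_n},A^{\bullet})\to 0$, because $F_1$ is nonexpansive on $\K(\x)$ by Remark~\ref{rem:H123imply}(b). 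For the first summand I would use the bound $h(F_{t_n}(A_{t_n}),F_1(A_{t_n}))\le \max_i\sup_{x\in A_{t_n}}d(f_{(i,t_n)}(x),f_{(i,1)}(x))\le \max_i\sup_{x\in D}d(f_{(i,t_n)}(x),f_{(i,1)}(x))$, which tends to $0$ by the uniform continuity in Lemma~\ref{fildec5} applied to each family $f_{(i,t)}$ on the compact set $D$ (each $f_{(i,t)}$ is nonexpansive by (H2) and Remark~\ref{rem:H123imply}(b), and continuous in $t$ by (H1)). Hence $A_{t_n}\to F_1(A^{\bullet})$, and since also $A_{t_n}\to A^{\bullet}$, uniqueness of Hausdorff limits gives $F_1(A^{\bullet})=A^{\bullet}$; as $A^{\bullet}$ is compact and the IFS is finite, no closure is needed and this agrees with the power-set operator of Definition~\ref{def:l}.

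For part (ii), I would first note that the fixed point $q_{i,t}$ of $f_{(i,t)}$ lies in $A_t$ for every $t<1$: since $f_{(i,t)}(A_t)\subseteq F_t(A_t)=A_t$ and $A_t$ is compact, iterating $f_{(i,t)}$ from any point of $A_t$ converges to $q_{i,t}$, which therefore lies in the closed set $A_t$. Thus $q_{i,t_n}\in A_{t_n}$, while $q_{i,t_n}\to q_i$ by (H3). Applying the standard fact that if $K_n\to K$ in the Hausdorff metric, $x_n\in K_n$, and $x_n\to x$, then $x\in K$, I conclude $q_i\in A^{\bullet}$ for each $i$, so $Q\subseteq A^{\bullet}$. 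Finally, part (i) shows $A^{\bullet}$ is $F_1$-invariant and contains $Q$, so $A^{\bullet}$ belongs to the family intersected in Theorem~\ref{thm:lowA}(i); minimality of the lower transition attractor then yields $A_{\bullet}\subseteq A^{\bullet}$.

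The main obstacle is the first summand $h(F_{t_n}(A_{t_n}),F_1(A_{t_n}))$, in which both the parameter and the set vary simultaneously; controlling it requires knowing that all the varying attractors $A_{t_n}$ are confined to a single compact set $D$, so that the uniform-in-$x$ continuity of $t\mapsto f_{(i,t)}$ from Lemma~\ref{fildec5} can be invoked. Establishing compactness of $D$, namely that the union of a convergent sequence of compacta in a complete space is compact, is the technical crux on which the whole argument rests.
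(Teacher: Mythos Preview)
Your proof is correct, and part (ii) is essentially identical to the paper's. For part (i), however, you take a slightly different decomposition than the paper does. You estimate
\[
h(A_{t_n},F_1(A^{\bullet}))\le h(F_{t_n}(A_{t_n}),F_1(A_{t_n}))+h(F_1(A_{t_n}),F_1(A^{\bullet})),
\]
changing the parameter first (on the moving set $A_{t_n}$) and then the set (via nonexpansiveness of $F_1$). This forces you to control $\sup_{x\in A_{t_n}}d(f_{(i,t_n)}(x),f_{(i,1)}(x))$ uniformly in $n$, which in turn requires you to trap all $A_{t_n}$ inside a single compact set $D$ and invoke Lemma~\ref{fildec5} on $D$; you correctly identify and discharge this extra compactness step. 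The paper instead writes
\[
h(F_1(A^{\bullet}),A^{\bullet})\le h(F_1(A^{\bullet}),F_{t_n}(A^{\bullet}))+h(F_{t_n}(A^{\bullet}),F_{t_n}(A_{t_n}))+h(A_{t_n},A^{\bullet}),
\]
changing the parameter on the \emph{fixed} compact set $A^{\bullet}$ (so Lemma~\ref{fildec5} applies directly with $D=A^{\bullet}$) and changing the set via nonexpansiveness of $F_{t_n}$. The paper's route thus avoids your ``technical crux'' entirely; what your approach buys is a slightly more direct appeal to uniqueness of limits rather than a three-term triangle inequality, at the cost of the auxiliary compactness argument.
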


\begin{proof}
Let $t_n\to 1$ be such that $A_{t_n}\to A^{\bullet}$ 
with respect to $h$ as $n\to\infty$.
To establish (i) recall that each $F_{t_n}$ and $F_1$
are nonexpansive with respect to $h$ 
 (part (b) of Remark \ref{rem:H123imply}).
Furthermore, according to Lemma \ref{fildec5},
we have 
\begin{equation*}
h(F_1(A^{\bullet}), F_{t_n}(A^{\bullet}))
\leq \max_{1\leq i\leq N} 
h(f_{(i,1)}(A^{\bullet}), f_{(i,t_n)}(A^{\bullet}))
\to 0. 
\end{equation*}
Hence, by using $F_{t_n}(A_{t_n}) = A_{t_n}$
we get
\[\begin{aligned}
h(F_1(A^{\bullet}), A^{\bullet}) & \leq 
h(F_1(A^{\bullet}), F_{t_n}(A^{\bullet})) +
h(F_{t_n}(A^{\bullet}), F_{t_n}(A_{t_n})) +
h(F_{t_n}(A_{t_n}), A^{\bullet}) \\ & \leq 
h(F_1(A^{\bullet}), F_{t_n}(A^{\bullet})) +
2 h(A_{t_n}, A^{\bullet}) 
\underset{n\to\infty}{\longrightarrow} 0.
\end{aligned}\]

Now we establish (ii).
Observe that $q_{i,t_n}\in A_{t_n}\to A^{\bullet}$, 
and $q_{i,t_n}\to q_i\in Q$ as $n\to\infty$, 
Thus $Q\subseteq A^{\bullet}$.
Hence $A^{\bullet}$ is $(F_1,Q)$-invariant, and therefore
it contains $A_{\bullet}$.
\end{proof}

\begin{remark}
Assuming (H1), (H2) and (H3), $A_{\bullet}$ is compact
whenever $A^{\bullet}$ exists.
\end{remark}

\begin{proposition} \label{prop:sym}  Assume that  $F_t$, $t\in[0,1]$,
satisfies (H1) and (H2).  Let $f_{(i_*,1)}$ be an isometry
for some $i_{*}\in\{1,...,N\}$. 
\begin{enumerate}
\item[(a)]
If there exists an upper transition attractor $A^{\bullet}$,
then it is $f_{(i_*,1)}$-symmetric,
i.e., \linebreak $f_{(i_*,1)}(A^{\bullet}) =A^{\bullet}$.
\item[(b)]
If there exists a lower transition attractor $A_{\bullet}$ 
that is compact, then it is $f_{(i_*,1)}$-symmetric. 
\end{enumerate}
\end{proposition}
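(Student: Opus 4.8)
The plan is to notice that both parts collapse to a single fact about isometries of compact metric spaces, so I would first isolate their common structure. In each case I have a compact set $A$ (namely $A^{\bullet}$ in (a), compact by Definition~\ref{def:u}, and $A_{\bullet}$ in (b), compact by hypothesis) that satisfies $F_1(A)=A$. For (a) this invariance is exactly Theorem~\ref{thm:LU}(i); for (b) it is immediate from the definition of the lower transition attractor as the smallest $(F_1,Q)$-invariant set. Writing $f:=f_{(i_*,1)}$, since $f\in F_1$ and $F_1(A)=\bigcup_{g\in F_1}g(A)$ for the compact set $A$ under the finite IFS $F_1$, the inclusion $f(A)\subseteq F_1(A)=A$ is automatic. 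Thus in both parts it remains only to establish the reverse inclusion $A\subseteq f(A)$.

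The reverse inclusion is where the isometry hypothesis enters, and I would derive it from the classical fact that an isometry of a nonempty compact metric space into itself is automatically surjective. Concretely, regard $f|_A\colon A\to A$ as a distance-preserving self-map of the compact space $A$, and suppose for contradiction that $f(A)\subsetneq A$. Since $f$ is continuous and $A$ is compact, $f(A)$ is compact, hence closed, so any $p\in A\setminus f(A)$ satisfies $\varepsilon:=d(p,f(A))>0$. Consider the orbit $p,f(p),f^2(p),\dots$, which stays in $A$. For $n>m\geq 0$ the isometry property gives $d(f^n(p),f^m(p))=d(f^{\,n-m}(p),p)$, and since $f^{\,n-m}(p)\in f(A)$ while $d(p,f(A))=\varepsilon$, we obtain $d(f^n(p),f^m(p))\geq\varepsilon$. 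This exhibits an infinite $\varepsilon$-separated subset of the compact, hence totally bounded, set $A$, a contradiction. Therefore $f(A)=A$, which is precisely the claimed $f_{(i_*,1)}$-symmetry.

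The main obstacle is exactly this reverse inclusion: invariance of $A$ under the whole Hutchinson operator $F_1$ forces only $f(A)\subseteq A$, and without a further argument even an isometric constituent map could a priori shrink $A$ to a proper subset. Compactness is indispensable to the separation argument and cannot be dropped, which is why part (b) explicitly assumes $A_{\bullet}$ compact (recall that by Example~\ref{ex:five} the lower transition attractor need not be compact), whereas in part (a) compactness is built into the very definition of an upper transition attractor. I would close by observing that, as $f$ restricted to $A$ is then a surjective isometry of a compact space, it is in fact a bijective isometry of $A$ onto itself, so the symmetry is realized by a genuine metric automorphism of $A$.
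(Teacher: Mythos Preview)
Your proof is correct and follows essentially the same approach as the paper's: obtain $f_{(i_*,1)}(A)\subseteq A$ from $F_1$-invariance, then invoke the fact that an isometry of a compactum into itself is surjective. The paper simply asserts this last fact in one line, whereas you supply the standard $\varepsilon$-separated orbit argument; otherwise the arguments coincide.
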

\begin{proof}
Observe that 
$f_{(i_*,1)}(A^{\bullet}) \subseteq A^{\bullet}$.
Then the isometry $f_{(i_*,1)}$ is surjective
on compactum $A^{\bullet}$. 
Analogously for $A_{\bullet}$.
\end{proof}

\section{The Existence of a Unique Upper Transition Attractor} \label{sec:ut}

This section addresses Question~\ref{conj:A} in the introduction. 
Theorem~\ref{thm:fildec15} below gives an affirmative answer 
for a large class of one-parameter IFS families. 

We start with Lemma~\ref{abc3} below for infinite IFSs, which is already known 
for finite IFSs.  Here the Hutchinson operator $F : 2^{\x} \rightarrow  2^{\x}$ is
as defined in Section~\ref{sec:uls}.

\begin{definition}
For a finite or infinite IFS $F$, a nonempty compact set $A$ is a \textbf{Hutchinson attractor}
on a complete metric space $\x$ if
\begin{itemize} 
\item (\textit{invariance}) $F(A) = A$, and
\item (\textit{attraction}) $A=\lim_{n\to\infty}F^{(n)}(S)$,
\end{itemize}
for every nonempty closed and bounded set $S\subseteq \x$,
the limit with respect to the Hausdorff metric. Note that a Hutchinson attractor, if it exists, is unique.
\end{definition}
A generalization of the Hutchinson theorem is the following
(see \cite{S} and the references therein):

\begin{theorem}
If an IFS $F$ on $(\x,d)$  
satisfies $\sup_{f\in F} Lip(f,d) <1$
and is compact, then it admits a Hutchinson attractor.
\end{theorem}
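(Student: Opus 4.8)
The plan is to realize the Hutchinson operator $F$ as a Banach contraction on a suitable complete hyperspace and then identify its fixed point as the attractor, the only delicate points being those forced by the possibly infinite cardinality of $F$: the closure in the definition of $F(S)$, the preservation of boundedness, and the compactness of the limit set. Set $s:=\sup_{f\in F}Lip(f,d)<1$. I would work in the space $\mathcal{CB}(\x)$ of nonempty closed bounded subsets of $\x$ with the Hausdorff distance $h$; since $\x$ is complete, $(\mathcal{CB}(\x),h)$ is a complete metric space by standard hyperspace theory. The first step is to check that $F$ maps $\mathcal{CB}(\x)$ into itself. Nonemptiness and closedness are immediate, since $F\neq\emptyset$ and $F(S)$ is a closure by definition. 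For boundedness, fix $x_0\in\x$: because $\{x_0\}$ is compact and $F$ is a compact IFS, $F(\{x_0\})$ is compact, hence bounded. If $S$ lies in the ball $B(x_0,R)$, then for every $f\in F$ and $x\in S$ we have $d(f(x),f(x_0))\le s\,d(x,x_0)\le sR$ with $f(x_0)\in F(\{x_0\})$, so $\bigcup_{f\in F}f(S)$ lies in the $sR$-neighborhood of the bounded set $F(\{x_0\})$, and thus $F(S)$ is bounded.

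The second step is the contraction estimate. Writing $e(B,C):=\sup_{b\in B}d(b,C)$ for the one-sided excess, a single map with $Lip(f,d)\le s$ satisfies $e(f(S),f(T))\le s\,e(S,T)$, and excesses are subadditive under unions, $e\big(\bigcup_{f}f(S),\bigcup_{f}f(T)\big)\le \sup_{f}e(f(S),f(T))$. Since the excess is unchanged when either argument is replaced by its closure (because $d(\cdot,C)$ is $1$-Lipschitz and $d(b,C)=d(b,\overline{C})$), these combine to give $e(F(S),F(T))\le s\,e(S,T)$ and hence $h(F(S),F(T))\le s\,h(S,T)$ for all $S,T\in\mathcal{CB}(\x)$. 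By the Banach fixed point theorem, $F$ has a unique fixed point $A\in\mathcal{CB}(\x)$, it is nonempty, and $F^{(n)}(S)\to A$ in $h$ for every $S\in\mathcal{CB}(\x)$ — which is exactly the attraction property required for all nonempty closed bounded sets $S$.

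It remains to show that $A$ is actually compact, and this is the one step where the compactness hypothesis on $F$ is essential; it is the genuinely new point relative to the finite case, where $F(K)$ is a finite union of compacta and no closure is needed. I would argue as follows: take any compact $K$, so that by induction and the compactness of $F$ each $F^{(n)}(K)$ is compact, while $F^{(n)}(K)\to A$ by the attraction property. A Hausdorff limit of compact sets in a complete space is compact: given $\varepsilon>0$, choose $n$ with $h(F^{(n)}(K),A)<\varepsilon/2$, take a finite $\varepsilon/2$-net of the totally bounded set $F^{(n)}(K)$, and check it is a finite $\varepsilon$-net of $A$; hence $A$ is totally bounded, and being closed in the complete space $\x$ it is compact. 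Therefore $A\in\K(\x)$ is the (necessarily unique) Hutchinson attractor.

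The hardest parts are the boundedness-preservation step and this final compactness step, both of which convert the compactness hypothesis into total-boundedness estimates; everything else is the classical contraction-mapping argument transplanted to the hyperspace $\mathcal{CB}(\x)$. One could equivalently run the fixed point argument directly on $\K(\x)$ — where $F$ is a self-map precisely by the compact-IFS hypothesis — and then extend attraction from compact to closed bounded initial sets using the same contraction estimate on $\mathcal{CB}(\x)$, but the two routes rely on identical ingredients.
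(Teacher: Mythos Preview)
Your argument is correct and complete. The paper does not actually prove this theorem: it merely states it with a citation to \cite{S} (and the references therein), so there is no in-paper proof to compare against. Your route---working on the complete hyperspace $\mathcal{CB}(\x)$, establishing the contraction estimate $h(F(S),F(T))\le s\,h(S,T)$ via one-sided excesses, applying Banach's fixed point theorem, and then recovering compactness of the fixed set $A$ as a Hausdorff limit of the compact iterates $F^{(n)}(\{x_0\})$---is exactly the standard approach and uses the two hypotheses precisely where they are needed: the uniform Lipschitz bound for the contraction estimate, and the compact-IFS assumption both for boundedness preservation (via compactness of $F(\{x_0\})$) and for the compactness of each $F^{(n)}(K)$. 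The total-boundedness argument for $A$ is clean and correct. Nothing is missing.
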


Roughly speaking, Lemma~\ref{abc3} says that, 
if compact IFSs $F,G$ are close to each other 
on a bounded subinvariant set, in the sense that 
each map $f$ from $F$ has a close neighbour $g\in G$, 
and vice-versa, then attractors of $F$ and $G$ are also close.

\begin{lemma}\label{abc3}
Let $\G:=\{g_i:i\in I\}$ and $\mH:=\{h_j:j\in J\}$ be 
two compact IFSs on a complete metric space $(\x,d)$ 
such that $Lip(\G,d) < 1$ and $Lip(\mH,d)<1$. 
Let $B \subseteq \x$ be a compact set such that 
$\G(B) \subseteq B$ and $\mH(B) \subseteq B$, 
and let $\delta>0$ satisfy
\begin{equation}\label{abc1}
    \forall_{i\in I}\;\exists_{j\in J}\;\forall_{x\in B} \;\;\;
    d(g_i(x),h_j(x))\leq\delta\quad \mbox{and}\quad 
    \forall_{j\in J}\;\exists_{i\in I}\;\forall_{x\in B}\;\;\;
    d(g_i(x),h_j(x))\leq\delta.
\end{equation}
Then
\begin{equation*}
    h(A_\G,A_\mH)\leq\frac{\delta}{1-\min\{Lip(\G,d),Lip(H,d)\}},
\end{equation*}
where $A_\G$ and $A_\mH$ are the Hutchinson attractors 
of $\G$ and $\mH$, respectively.
\end{lemma}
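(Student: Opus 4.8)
The plan is to combine a single-step ``collage'' estimate with the contractivity of the Hutchinson operators, applied symmetrically in $\G$ and $\mH$. First I would record the two standing facts that make the machinery available. By the quoted generalization of Hutchinson's theorem, since $\G$ and $\mH$ are compact with $Lip(\G,d)<1$ and $Lip(\mH,d)<1$, each admits a unique Hutchinson attractor, $A_\G$ and $A_\mH$. Moreover each attractor sits inside $B$: because $\G(B)\subseteq B$, the inclusion $\G^{(n)}(B)\subseteq B$ follows by induction, and since $A_\G=\lim_n \G^{(n)}(B)$ in the Hausdorff metric while $B$ is compact (hence closed), we get $A_\G\subseteq B$; the same argument applied to $\mH$ gives $A_\mH\subseteq B$. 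This containment is exactly what lets condition \eqref{abc1}, which is only assumed on $B$, be used against the attractors themselves.

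Next I would prove the key perturbation inequality $h(\G(A_\mH),\mH(A_\mH))\leq\delta$. Since $A_\mH\subseteq B$, the closures in the Hutchinson operators are harmless: because the distance-to-a-set function is $1$-Lipschitz, it suffices to bound distances on the pre-closure unions $\bigcup_i g_i(A_\mH)$ and $\bigcup_j h_j(A_\mH)$. For one half of the Hausdorff distance, take a point $g_i(a)$ with $a\in A_\mH\subseteq B$; the first clause of \eqref{abc1} supplies $j\in J$ with $d(g_i(a),h_j(a))\leq\delta$, and since $h_j(a)\in\mH(A_\mH)$ this bounds $d(g_i(a),\mH(A_\mH))$ by $\delta$. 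The second clause of \eqref{abc1} handles the symmetric half. Because $A_\mH$ is invariant, $\mH(A_\mH)=A_\mH$, and so in fact $h(A_\mH,\G(A_\mH))\leq\delta$.

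Now the collage step. The operator $\G:\K(\x)\to\K(\x)$ is a Banach contraction with factor $Lip(\G,d)$ in the Hausdorff metric on the complete space $\K(\x)$, with fixed point $A_\G$, so applying the standard estimate $h(y,x^*)\leq (1-s)^{-1}h(y,T(y))$ to the seed $A_\mH\in\K(\x)$ yields
\[
h(A_\mH,A_\G)\leq \frac{h(A_\mH,\G(A_\mH))}{1-Lip(\G,d)}\leq\frac{\delta}{1-Lip(\G,d)}.
\]
Running the entire argument with the roles of $\G$ and $\mH$ interchanged (using $A_\G\subseteq B$ and $\G(A_\G)=A_\G$) gives $h(A_\G,A_\mH)\leq \delta/(1-Lip(\mH,d))$. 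Since $s\mapsto \delta/(1-s)$ is increasing on $[0,1)$, taking the smaller of the two bounds is precisely $\delta/\bigl(1-\min\{Lip(\G,d),Lip(\mH,d)\}\bigr)$, which is the claim.

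The only points requiring care, rather than genuine difficulty, are bookkeeping in the infinite-IFS setting: verifying that $\G$ really is a Hausdorff contraction on $\K(\x)$ with factor $Lip(\G,d)$ (via the union estimate $h(\bigcup_i g_i(K),\bigcup_i g_i(L))\leq \sup_i Lip(g_i,d)\,h(K,L)$ together with the fact that taking closures does not increase $h$), and checking that the closure in the Hutchinson operator does not disturb the $\delta$-closeness estimate. I expect the main conceptual step to be recognizing that one should \emph{not} estimate $h(A_\G,A_\mH)$ directly, but rather feed one attractor as a seed into the other system's collage inequality and then symmetrize, which is what produces the sharper constant $1-\min\{Lip(\G,d),Lip(\mH,d)\}$ in place of the weaker $1-\max\{\cdot,\cdot\}$.
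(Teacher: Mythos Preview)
Your argument is correct. The route differs from the paper's, though both exploit contractivity of the Hutchinson operators together with the single-step bound $h(\G(D),\mH(D))\leq\delta$ for compact $D\subseteq B$ coming from \eqref{abc1}. The paper assumes without loss of generality that $\alpha:=Lip(\G,d)\leq Lip(\mH,d)$ and proves by induction that $h(\G^{(n)}(B),\mH^{(n)}(B))\leq\delta\sum_{k=0}^{n-1}\alpha^{k}$, splitting via the intermediate set $\G(\mH^{(n)}(B))$ at each step and then passing to the limit. You instead observe $A_\mH\subseteq B$, use invariance $\mH(A_\mH)=A_\mH$ to get $h(A_\mH,\G(A_\mH))\leq\delta$, and invoke the collage estimate $h(y,x^*)\leq h(y,T(y))/(1-s)$ once; the symmetrization to extract $\min$ is the same in spirit as the paper's ``without loss of generality'' reduction. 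Your packaging is shorter and highlights clearly why the attractors must lie in $B$, while the paper's explicit induction has the minor advantage of giving quantitative control of $h(\G^{(n)}(B),\mH^{(n)}(B))$ at every finite stage, not just in the limit.
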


\begin{remark}
Given two compact IFSs $G$ and $H$ with attractors 
$A_G$ and $A_H$, there always exists a nonempty compact 
$B\subseteq\x$
such that $G(B)\subseteq B$ and $H(B) \subseteq B$. 
Indeed, since $G$ and $H$ are compact,  
the IFS $F\cup G$ is also compact, 
hence admits the attractor $A_{G\cup H}$. Furthermore, 
for any nonempty compact set $D\subseteq \x$, the set 
\[
	B:= \on{cl}\Big(D\cup \bigcup_{n\in\N}(G\cup H)^{(n)}(D)\Big) 
	=A_{G\cup H}\cup D\cup \bigcup_{n\in\N}(G\cup H)^{(n)}(D)
\]
is compact, and $G(B)\cup H(B) \subseteq B$.
\end{remark}

\begin{proof}(Of Lemma \ref{abc3}.)
By (\ref{abc1}), we can easily see that for any compact $D\subseteq B$,
\begin{equation}\label{abc2}
    h({\G(D),\mH(D)})\leq \delta.
\end{equation}
Without loss of generality suppose $\alpha=Lip(\G,d)\leq Lip(H,d)$. 
We will check inductively that for every $n\in\N$,
\begin{equation}\label{abc22}
   h(\G^{(n)}(B),H^{(n)}(B))\leq \delta\sum_{k=0}^{n-1}\alpha^k.
\end{equation}
The case $n=1$ of \eqref{abc22} is exactly (\ref{abc2}) for $D:=B$. 
Assume that the inequality \eqref{abc22} holds for some $n\in\N$. Then 
\[\begin{aligned}
    h(G^{(n+1)}(B),  H^{(n+1)}(B)) & 
    \leq h(\G(\G^{(n)}(B)),\G(H^{(n)}(B)))+ 
    h(\G(H^{(n)}(B)),\mH(H^{(n)}(B))) \\ 
    & \leq \alpha h(\G^{(n)}(B),\mH^{(n)}(B))+\delta\leq 
    \alpha \delta\sum_{k=0}^{n-1}\alpha^k+\delta=
    \delta\sum_{k=0}^{n}\alpha^k,
\end{aligned}\]
where the penultimate inequality follows from \eqref{abc2} 
for $D:=H^{(n)}(B)$, and the last inequality uses \eqref{abc22} 
for $n$. Thus \eqref{abc22} is true for $n+1$. 
Now from (\ref{abc22}) and the convergence of the Hutchinson iterates 
to the attractor, we get
\begin{equation*}
    h(A_\G,A_\mH)\leq \delta\sum_{k=0}^{\infty}\alpha^k =
    \frac{\delta}{1-\alpha}.
\end{equation*}
This completes the proof.
\end{proof}

\begin{lemma}\label{fildec1}
Let $\x$ be a metric space and $f_t$, $t\in[0,1]$, 
be a family of nonexpansive selfmaps of $\x$ such 
that the map
$[0,1]\ni t\mapsto f_t(x)$ is continuous  for every $x\in\x$.
Then the IFS $F:=\{f_t:t\in [0,1]\}$ is compact.
\end{lemma}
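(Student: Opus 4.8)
The plan is to show directly that the Hutchinson image $F(K)=\overline{\bigcup_{t\in[0,1]} f_t(K)}$ is compact for every compact $K\subseteq\x$, and in fact that the closure is superfluous. I would introduce the joint evaluation map
\[
\Phi:[0,1]\times K\to\x,\qquad \Phi(t,x):=f_t(x),
\]
and reduce the whole statement to the continuity of $\Phi$. Indeed, $[0,1]\times K$ is a compact metric space (a product of two compacta), so once $\Phi$ is continuous its image $\Phi([0,1]\times K)=\bigcup_{t\in[0,1]} f_t(K)$ is a compact, hence closed, subset of $\x$. Then $F(K)=\overline{\bigcup_{t\in[0,1]} f_t(K)}=\bigcup_{t\in[0,1]} f_t(K)$ is compact, which is exactly the assertion that $F$ is a compact IFS.

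The heart of the argument is therefore the joint continuity of $\Phi$, and this is where both hypotheses enter. Working with sequences (legitimate since everything is metric), suppose $(t_n,x_n)\to(t,x)$ in $[0,1]\times K$; I would estimate
\[
d\big(f_{t_n}(x_n),f_t(x)\big)\leq d\big(f_{t_n}(x_n),f_{t_n}(x)\big)+d\big(f_{t_n}(x),f_t(x)\big)\leq d(x_n,x)+d\big(f_{t_n}(x),f_t(x)\big).
\]
The first term is handled by nonexpansiveness of $f_{t_n}$, giving a bound by $d(x_n,x)\to 0$ that is uniform in the index $n$; the second term tends to $0$ by the assumed continuity of $t\mapsto f_t(x)$ at the fixed point $x$. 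Hence $\Phi(t_n,x_n)\to\Phi(t,x)$ and $\Phi$ is continuous.

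The main (and really the only) obstacle is this upgrade from separate continuity to joint continuity: continuity of each $f_t$ alone would not control $d\big(f_{t_n}(x_n),f_{t_n}(x)\big)$ uniformly in $n$, since the modulus of continuity of $f_{t_n}$ could degenerate as $n\to\infty$. It is precisely the nonexpansiveness assumption that provides equicontinuity in the space variable, uniform over $t\in[0,1]$, closing this gap. As an alternative route one could instead invoke Lemma~\ref{fildec5}, which already yields that $[0,1]\ni t\mapsto f_t(K)\in\K(\x)$ is continuous, so its image is a compact family of compacta in $(\K(\x),h)$; a short sequential argument then shows that the union of a compact family of compact sets is compact, again identifying $\bigcup_{t\in[0,1]} f_t(K)$ as compact. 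I would present the direct evaluation-map argument as primary, since it is self-contained and makes transparent the role of the nonexpansiveness hypothesis.
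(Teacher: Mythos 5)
Your proof is correct, and your primary argument is genuinely more self-contained than the one in the paper. The paper proves this lemma by first invoking Lemma~\ref{fildec5} to get continuity of the map $[0,1]\ni t\mapsto f_t(D)\in\K(\x)$ in the Hausdorff metric, and then citing results from Hu--Papageorgiou (the union of the values of a continuous compact-valued map over a compact domain is compact and closed) to conclude that $F(D)=\bigcup_{t\in[0,1]}f_t(D)$ is compact; this is exactly the ``alternative route'' you sketch at the end. Your main argument instead establishes joint continuity of the evaluation map $\Phi(t,x)=f_t(x)$ on the compact product $[0,1]\times K$ and takes its image directly. The key estimate
\[
d\bigl(f_{t_n}(x_n),f_t(x)\bigr)\leq d(x_n,x)+d\bigl(f_{t_n}(x),f_t(x)\bigr)
\]
is sound: nonexpansiveness supplies the equicontinuity in the space variable needed to upgrade separate continuity to joint continuity, and this is precisely where the hypothesis is used in the paper's route as well (inside Lemma~\ref{fildec5}). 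What your approach buys is independence from the multivalued-analysis citations and from Lemma~\ref{fildec5}; what the paper's approach buys is that Lemma~\ref{fildec5} is needed elsewhere anyway (e.g., in Theorem~\ref{thm:LU} and Theorem~\ref{thm:fildec15}), so reusing it keeps the proof to two lines. Both arguments are complete and correct.
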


\begin{proof}
Take any nonempty and compact set $D \subseteq \x$. 
By Lemma \ref{fildec5}, the map 
$[0,1]\ni t\mapsto f_t(D)\in\K(\x)$ is continuous. 
This implies that
\[ 
F(D)=\overline{\bigcup_{t\in[0,1]}f_t(D)} =
\bigcup_{t\in[0,1]}f_t(D)
\]
is compact thanks to 
\cite[Corollary 2.20 chap. 2.1 p.42 and Theorem 2.68 chap. 2.2 p. 62]{HuPapa}. 
\end{proof}

Recall that any surjective isometry $g:\x\to\x$ 
of a real normed space is of the following form: 
\[
	g(x)=\hat{g}(x) + b=\hat{g}(x-x_*),
\]
where $\hat{g}:\x\to\x$ is a linear isometry, 
$b=g(0)\in\x$ and $x_*=g^{-1}(0)$
(cf. \cite{FlemingJamison} chap.1.3, Mazur--Ulam theorem).

\begin{lemma}\label{fildec2}
Let $\x$ be a real Banach space; 
let $g:\x\to \x$ be a surjective isometry; 
let $x_*=g^{-1}(0)$; and let $\hat{g}$ 
be the linear part of $g$.
For $t\in[0,1]$, set 
\[
	g_t(x):=tg(x)+x_*,\;\;x\in X.
\] 
The following statements hold:
\begin{itemize}
\item[(a)] For every $m\in\N$, $t_1,...,t_m\in[0,1]$ 
and for all $x\in \x$, we have
\begin{equation*}
	g_{t_1}\circ...\circ g_{t_m}(x)=t_1\cdots 
	t_m \,\hat{g}^{(m)}(x-x_*)+x_*.
\end{equation*}
\item[(b)] $g_1$ is periodic if and only if $\hat{g}$ is periodic, 
and their periods are the same.
\item[(c)] If $g_1$ is periodic, then the monoid 
generated by the IFS $G:=\{g_t:t\in[0,1]\}$ is compact.
\end{itemize}
\end{lemma}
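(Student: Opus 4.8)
The plan is to reduce everything to the affine normal form $g_t(x) = t\,\hat{g}(x - x_*) + x_*$, which follows at once from the Mazur--Ulam representation $g(x) = \hat{g}(x - x_*)$ recalled just before the lemma together with the definition $g_t(x) = t\,g(x) + x_*$. Part (a) is then a direct induction on $m$. The base case $m=1$ is exactly the normal form. For the inductive step I would peel off the innermost map: setting $y := g_{t_2}\circ\cdots\circ g_{t_m}(x)$, the inductive hypothesis gives $y - x_* = t_2\cdots t_m\,\hat{g}^{(m-1)}(x - x_*)$, and then applying $g_{t_1}$ and using linearity of $\hat{g}$ to pull the scalar $t_2\cdots t_m$ through $\hat{g}$ yields $g_{t_1}(y) = t_1\cdots t_m\,\hat{g}^{(m)}(x - x_*) + x_*$. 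The only inputs are the normal form and the linearity of $\hat{g}$.

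Part (b) I would obtain as the specialization of (a) with $t_1 = \cdots = t_m = 1$, giving $g_1^{(m)}(x) = \hat{g}^{(m)}(x - x_*) + x_*$. Substituting $u = x - x_*$ shows that $g_1^{(m)} = \id$ holds if and only if $\hat{g}^{(m)}(u) = u$ for all $u\in\x$, i.e. if and only if $\hat{g}^{(m)} = \id$. Thus $g_1$ and $\hat{g}$ have exactly the same set of periods $m$, so they are simultaneously periodic with equal minimal period.

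The substantive step is part (c). Assuming $g_1$ periodic, by (b) we have $\hat{g}^{(p)} = \id$ for some $p\in\N$, so $\hat{g}^{(m)}$ depends only on $m \bmod p$ and ranges over the finite set $\{\id,\hat{g},\dots,\hat{g}^{(p-1)}\}$. Combined with (a), every element of the monoid $\mathbb{M}(G)$ has the form $x \mapsto s\,\hat{g}^{(r)}(x - x_*) + x_*$ for some $s = t_1\cdots t_m \in [0,1]$ and some $r \in \{0,1,\dots,p-1\}$, the identity corresponding to $s=1,\ r=0$. Hence $\mathbb{M}(G)$ is contained in the parametrized family $\mathcal{M} := \{\,x \mapsto s\,\hat{g}^{(r)}(x - x_*) + x_* : s\in[0,1],\ 0\le r < p\,\}$. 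To get compactness of the monoid as an IFS, I would fix a compact $K\subseteq\x$ and consider the map $\Phi:[0,1]\times\{0,\dots,p-1\}\times K \to \x$, $(s,r,x)\mapsto s\,\hat{g}^{(r)}(x - x_*) + x_*$. Its domain is compact (the product of the compact set $[0,1]\times K$ with the finite set $\{0,\dots,p-1\}$), and $\Phi$ is continuous since each $\hat{g}^{(r)}$ is a bounded linear isometry and scalar multiplication is jointly continuous; therefore its image $\bigcup_{\phi\in\mathcal{M}}\phi(K)$ is compact, hence closed. Since $\mathbb{M}(G)(K) = \overline{\bigcup_{\phi\in\mathbb{M}(G)}\phi(K)}$ is a closed subset of this compact image, it is compact, which is exactly the claim that the monoid IFS is compact.

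The main obstacle is the bookkeeping in part (c): one must recognize that periodicity collapses the infinitely many iterates $\hat{g}^{(m)}$ to finitely many residues $r = m\bmod p$, so that the entire monoid fits inside a single continuous image of a compact parameter space. Once the family $\mathcal{M}$ is identified, the compactness conclusion is a routine continuous-image argument; the only care needed is the passage from the inclusion $\mathbb{M}(G)\subseteq\mathcal{M}$ to compactness of $\mathbb{M}(G)(K)$, which uses that a closed subset of a compact set is compact.
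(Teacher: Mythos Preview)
Your proof is correct. Parts (a) and (b) coincide with the paper's argument (the paper also does (a) by induction using the normal form $g_t(x)=t\,\hat g(x-x_*)+x_*$, and derives (b) from the identity $g_1^{(m)}(x)-x_*=\hat g^{(m)}(x-x_*)$).

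For (c) the two arguments diverge slightly. The paper pushes further than you do: instead of the inclusion $\mathbb M(G)\subseteq\mathcal M$, it shows \emph{equality}
\[
\mathbb M(G)=\{g_t^{(i)}:i=1,\dots,p,\ t\in[0,1]\},
\]
by writing $t_1\cdots t_m\,\hat g^{(m)}=t^{\,i}\hat g^{(i)}$ with $i\equiv m\pmod p$ and $t=(t_1\cdots t_m)^{1/i}$. It then appeals to Lemma~\ref{fildec1} to conclude that each one-parameter family $\{g_t^{(i)}:t\in[0,1]\}$ is a compact IFS, so their finite union is as well. Your route avoids both the $i$-th root trick and Lemma~\ref{fildec1}: you use only the inclusion obtained by reducing $m$ modulo $p$, and then get compactness of $\mathbb M(G)(K)$ directly as a closed subset of the continuous image $\Phi\big([0,1]\times\{0,\dots,p-1\}\times K\big)$. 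This is a little more self-contained; the paper's version, on the other hand, yields the exact description of $\mathbb M(G)$, which is convenient later when estimating compositions in the proof of Theorem~\ref{thm:fildec15}.
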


\begin{proof}
By the preceding observations concerning surjective isometries, we have
\[
	g_{t_1}(x)= t_1\hat{g}(x-x_*)+x_*
\]
which gives us (a) for $m=1$. 
Suppose that (a) is true for some $m\in\N$. Then we have 
\[\begin{aligned}
	g_{t_1}\circ...\circ g_{t_m}\circ g_{t_{m+1}}(x) &
	=t_1\cdots t_m\hat{g}^{(m)}(t_{m+1}\hat{g}(x-x_*)+x_*-x_*)+x_* \\
	& =t_1\cdots t_{m+1}\hat{g}^{(m+1)}(x-x_*)+x_*
\end{aligned}\]
so we obtain (a) for $m+1$. This ends the proof of (a).

By (a), for every $m\in\N$ and $x\in \x$, we have
\[
	g^{(m)}_1(x)-x_*=\hat{g}^{(m)}(x-x_*).
\]
Hence if $g^{(m)}_1=\on{id}_\x$, then also 
$\hat{g}^{(m)}=\on{id}_\x$, and vice-versa. 
Thus (b) is true.

Now we prove (c).  By (a), each element of 
the desired monoid $\mathbb{M}(G)$, distinct 
from the identity map, is of the form
\[
g_{t_1}\circ...\circ g_{t_m}(x)= 
t_1\cdots t_m\hat{g}^{(m)}(x-x_*)+x_*=
t^i\hat{g}^{(i)}(x-x_*)+x_*=g_t^{(i)}(x)
\]
for some $i=1,...,p$ where 
$p$ is the period of $\hat{g}$ and 
$t:=\sqrt[i]{t_1\cdots t_m}$. Hence
\[
\mathbb{M}(G)=\{g_t^{(i)}:i=1,...,p,\;t\in[0,1]\}
\]
(note that $g_1^{p}=\on{id}_\x$). In particular, 
$\mathbb{M}(G)$ is the finite union of IFSs 
$\{g_t^{(i)}:t\in[0,1]\}$ over $i=1,...,p$, 
which are compact in view of Lemma \ref{fildec1}. 
Thus $\mathbb{M}(G)$ itself is compact.
\end{proof}

We now state the main result of this section, 
which shows that quite a wide class of IFS families 
possess a unique upper transition attractor. 
Note that statement (b) in Lemma \ref{fildec2} 
is intended to clarify the assumption on 
periodicity of the linear part of $g$ in  Theorem~\ref{thm:fildec15}.

\begin{theorem}\label{thm:fildec15} 
Let $\x$ be a real Banach space and let $g:\x\to\x$ be a surjective isometry on $\x$
with periodic linear part.  Consider the one-parameter family 
\[
F_t^g := F_t\cup\{g_t\}
\] 
on $\x$ with $t\in [0,1]$, where 
\[
F_t: =\{f_{(i,t)} \, : \, 1\leq i\leq N\}
\]
and
\[
g_t(x):=tg(x)+x_*,
\]
 where
 $x_* = g^{-1}(0)$ or, equivalently, $g_t(x_*) = x_*$ for all $t$. 
Assume that $F_t$ satisfies:
\begin{itemize}
\item[(i)] for any $i=1,...,N$ and $x\in\x$, the map 
$[0,1]\ni t\mapsto f_{(i,t)}(x)$ is continuous, and
\item[(ii)] $\sup\{\on{Lip}(F_t,||\cdot||):t\in[0,1]\}<1$. 
\end{itemize}
Then $t_0=1$ is a threshold for the one-parameter family $F^g_t$, and $F^g_t$ has a unique upper transition attractor. 
\end{theorem}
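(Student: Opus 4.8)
The plan is to treat the threshold claim and the transition attractor separately, and to let periodicity do its work by rewriting everything in terms of the homothety $g_t^{(p)}$. Throughout set $c:=\sup_{t\in[0,1]}\on{Lip}(F_t,\|\cdot\|)<1$, write $\hat g$ for the linear part of $g$, let $p$ be its period, and put $s_\lambda(x):=x_*+\lambda(x-x_*)$ for $\lambda\in[0,1]$; by Lemma~\ref{fildec2}(a) we have $g_t^{(p)}=s_{t^p}$ and $\|g_t^{(m)}(x)-x_*\|=t^m\|x-x_*\|$. For $t<1$ the maps $f_{(i,t)}$ have $\on{Lip}\le c$ and $g_t$ has $\on{Lip}(g_t)=t<1$, so $F_t^g$ is contractive and $A_t$ exists. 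For $t>1$, any nonempty compact $F_t^g$-invariant set $A$ must satisfy $g_t(A)\subseteq A$; boundedness of $A$ together with $\|g_t^{(m)}(a)-x_*\|=t^m\|a-x_*\|$ forces $A=\{x_*\}$, and iterating $g_t$ from a point near $x_*$ escapes every bounded set, so $\{x_*\}$ is not attracting. Hence $t_0=1$ is the threshold.

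Next I would construct the candidate attractor as the Hutchinson attractor of a compact, \emph{uniformly} contractive IFS built from continuous scalings,
\[
\mathcal W:=\{\,s_\lambda\circ g_1^{(k)}\circ f_{(i,1)}:\lambda\in[0,1],\;0\le k<p,\;1\le i\le N\,\}.
\]
Each generator has $\on{Lip}\le\lambda\,\on{Lip}(f_{(i,1)})\le c<1$, so $\on{Lip}(\mathcal W)\le c$. Since $\mathcal W$ is parametrized by the compact set $[0,1]\times\{0,\dots,p-1\}\times\{1,\dots,N\}$ and $(\lambda,k,i)\mapsto s_\lambda g_1^{(k)}f_{(i,1)}(x)$ is continuous, Lemma~\ref{fildec1} shows $\mathcal W$ is a compact IFS (the compactness of the scaling family is, structurally, the content of Lemma~\ref{fildec2}(c)). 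By the generalized Hutchinson theorem quoted above, $\mathcal W$ has a compact Hutchinson attractor $A_{\mathcal W}$, and this is my proposed $A^\bullet$; note $\lambda=0$ gives the constant map to $x_*$, so $x_*\in A_{\mathcal W}$, consistent with $x_*\in A_t$ for all $t<1$.

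The bridge to $A_t$ is a reparametrization: for $t<1$ I would show that $A_t$ is the attractor of
\[
\Psi_t:=\{\,g_t^{(j)}\circ f_{(i,t)}:j\ge0,\;1\le i\le N\,\}\cup\{\check q\},\qquad \check q\equiv x_*=\lim_{j\to\infty}g_t^{(j)}f_{(i,t)}.
\]
The point of $\Psi_t$ is that $\on{Lip}(g_t^{(j)}f_{(i,t)})=t^j\on{Lip}(f_{(i,t)})\le c$ for every $j$ and every $t$, so $\on{Lip}(\Psi_t)\le c<1$ uniformly: the near-identity scaling $g_t^{(p)}$ never appears alone, but is always absorbed into a genuine contraction. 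That $A_{\Psi_t}=A_t$ follows by checking $F_t^g(A_{\Psi_t})=A_{\Psi_t}$ (each $\Psi_t$-generator is an $F_t^g$-word, and applying $g_t$ reindexes the union $\bigcup_j g_t^{(j)}f_{(i,t)}$) and invoking uniqueness of the invariant compactum of the contractive $F_t^g$.

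Finally I would compare $\Psi_t$ with $\mathcal W$ by Lemma~\ref{abc3}. Writing $j=pm+k$ gives $g_t^{(j)}=s_{t^{pm}}g_t^{(k)}$, so $\Psi_t$ uses the discrete scales $\{t^{pm}:m\ge0\}$, whose mesh in $[0,1]$ is at most $1-t^p\to0$ as $t\to1$, while $\mathcal W$ uses all of $[0,1]$. Pairing each $\Psi_t$-generator with the equal-scale $\mathcal W$-generator, and each $\mathcal W$-generator with a nearby-scale $\Psi_t$-generator, and using Lemma~\ref{fildec5} for $g_t^{(k)}f_{(i,t)}\to g_1^{(k)}f_{(i,1)}$, the closeness $\delta(t)$ of~\eqref{abc1} on a common bounded subinvariant $B$ obeys $\delta(t)\le(1-t^p)\on{diam}(B)+\omega(t)\to0$. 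Since \emph{both} IFSs have Lipschitz constant $\le c<1$, Lemma~\ref{abc3} gives $h(A_t,A_{\mathcal W})=h(A_{\Psi_t},A_{\mathcal W})\le\delta(t)/(1-c)\to0$, so $\lim_{t\to1^-}A_t=A_{\mathcal W}$ along the full net; every sequence $t_n\nearrow1$ then yields $A_{t_n}\to A_{\mathcal W}$, giving existence and uniqueness of $A^\bullet=A_{\mathcal W}$. The main obstacle is exactly the denominator $1-\min\{\on{Lip}\}$ in Lemma~\ref{abc3}: comparing $F_s^g$ and $F_t^g$ directly fails because $\on{Lip}(g_t)=t\to1$, and the whole difficulty is to reorganize the system so the effective contraction ratio stays bounded by $c$. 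Periodicity is what makes this possible, letting $g_t^{(p)}$ be folded into the contractions $f_{(i,t)}$ and identifying the limit with the continuous-scale model $\mathcal W$.
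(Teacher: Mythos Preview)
Your proposal is correct and rests on the same core idea as the paper: reorganize $F_t^g$ into the system $\Psi_t$ (the paper's $M_t$) of compositions $g_t^{(j)}\circ f_{(i,t)}$, whose Lipschitz constant is $\le c<1$ \emph{uniformly in $t$}, and then invoke Lemma~\ref{abc3}. The paper obtains the upper transition attractor indirectly, proving uniform continuity of $[0,1)\ni t\mapsto A_t$ by comparing $M_t$ with $M_s$ for $s,t$ near~$1$ (pairing $g_t^{(m)}\circ f_{(i,t)}$ with $g_s^{(k)}\circ f_{(i,s)}$ where $k$ is chosen so that $|t^m-s^k|\le 1-(r-\delta)^p$). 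Your route instead introduces the continuous-scale limit IFS $\mathcal W$ and compares each $\Psi_t$ directly with $\mathcal W$; this buys you an explicit identification $A^{\bullet}=A_{\mathcal W}$ and sidesteps the somewhat delicate choice of $k$ in the paper's Step~4, at no extra cost. One detail you should make precise: the common subinvariant compact $B$ in your use of Lemma~\ref{abc3} must be \emph{independent of $t$}, so that $\on{diam}(B)$ is a fixed constant in the bound $\delta(t)\le(1-t^p)\on{diam}(B)+\omega(t)$. The paper constructs such a $B$ in its Step~1 as the compact semiattractor of $\bigcup_{t\in[0,1]}F_t\cup\{g_t:t\in[0,1]\}$; this $B$ works for your $\mathcal W$ as well, since $s_\lambda=g_{\lambda^{1/p}}^{(p)}$ maps $B$ into itself.
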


\begin{remark}\label{rem:com}  Two comments before the proof:

 First, Examples \ref{ex:two}, \ref{ex:three}, \ref{ex:four}, and \ref{ex:inf} below show that the assumptions in the hypothesis of Theorem~\ref{thm:fildec15} are essential.

Second, that the upper transition attractor in \cite[Proposition 8.1]{V} 
is unique is a direct consequence of Theorem~\ref{thm:fildec15}.

\end{remark}

\begin{proof}[Proof of Theorem~\ref{thm:fildec15}]

 Since
all functions in $F_t^g$ are contractions for $t<1$, the one-parameter family $F_t^g$ has an attractor $A_t$ for $t\in [0,1)$.  Since $g_t$ is a similarity with ratio greater than $1$ for $t>1$, the one-parameter family $F_t^g$ has no attractor for $t>1$.  Therefore $t_0=1$ is a threshold for $F_t^g$. In other words 
$ t_0 = \widehat{t_0} = 1$. 

From \cite[Proposition 8.1]{V} the existence of 
a unique upper transition attractor 
is equivalent to the uniform continuity of the map 
\begin{equation*}
[0,1)\ni t\mapsto A_{t}\in\K(X).
\end{equation*} 
Hence we will prove that this map is uniformly continuous.\\
$\;$\\
Step 1. Finding a nonempty and compact set $B$ so that 
\begin{equation}\label{filmarch1}
f_{(i,t)}(B)\subseteq B\;\;\mbox{and}\;\;g_t(B)\subseteq B
\end{equation}
for all $t\in[0,1]$ and $i=1,...,N$.\\ 

Consider the IFSs
\begin{equation}\label{fildec14} 
\begin{aligned}
F & :=\bigcup_{t\in[0,1]} F_t=\{f_{(i,t)}:i=1,...,N,\;t\in[0,1]\}=
\bigcup_{i=1,...,N}\{f_{(i,t)}:\;t\in[0,1]\} \\
G & :=\{g_t:t\in[0,1]\}.
\end{aligned}
\end{equation}
In view of Lemma \ref{fildec1}, the IFS $F$ is 
a finite union of compact IFSs. Hence $F$ is compact. 
Also, in view of Lemma \ref{fildec2} (c), 
the monoid $\mathbb{M}(G)$ is compact. Moreover, $G$ consists of 
nonexpansive maps and by assumption (ii) we have $Lip(F,||\cdot||)<1$. 
Then using \cite[Theorem 4.1]{S} (cf. also \cite[Remark 2.2]{S}), we see 
that the IFS $F\cup G$ has compact semiattractor $B$. In particular,
\[
	B=F(B)\cup G(B).
\]
Therefore (\ref{filmarch1}) holds.\\
$\;$\\
Step 2. An alternative description of the attractor $A_t$ of $F^g_t$.\\

Fix a real value $t\in[0,1)$. Clearly, 
\[
	Lip(F^g_t,||\cdot||)\leq\max\{t,Lip(F,||\cdot||)\}<1.
\] 
Hence $F^g_t$ generates a unique attractor $A_t$. Again using 
\cite[Theorem 4.1]{S} for IFSs $F_t$ and $\{g_t\}$, we see that 
$A_t$ can be viewed as the attractor of the IFS
\begin{equation}\label{fildec12}
M_t:=\{g_t^{(m)}\circ f_{(i,t)}:i=1,...,N,\;m=0,1,2,...\}
\end{equation}
where $g_t^{(0)}=\on{id}_{\x}$. Note that the assumptions of 
\cite[Theorem 4.1]{S} will be satisfied if we observe that the monoid 
\[
	\mathbb{M}(\{g_t\})=\{g_t^{(m)}:m=0,1,2,...\}
\] 
is compact. This is the case as it is a subset of a compact IFS 
$\mathbb{M}(G)$ considered in Step 1. (Alternatively, we can observe 
that $\mathbb{M}(\{g_t\})$ is compact by using the fact 
$\on{Lip}(g_t)\leq t<1$.)\\
Moreover, in view of (\ref{filmarch1}), we see that $A_t \subseteq B$.\\
$\;$\\
Step 3. Uniform continuity of the map $[0,t_0]\ni t\mapsto A_{t}$, 
where $t_0\in[0,1)$.\\

Fix any $t_0\in[0,1)$. Clearly,
\[
\sup\{\on{Lip}(F^g_t,||\cdot||):t\in[0,t_0]\}\leq 
\max\{t_0,Lip(F,||\cdot||)\}<1.
\]
Hence the assumptions of \cite[Theorem 2.6]{J} are satisfied. 
This means that the map $[0,t_0]\ni t\mapsto A_{t}$ is continuous. 
As $[0,t_0]$ is compact, it is uniformly continuous.\\
$\;$\\
Step 4. Uniform continuity of the map $[0,1)\ni t\mapsto A_{t}.$\\

The idea in the proof below is that if both $t,s<1$ are appropriately 
less than $1$, then we make use of uniform continuity proved 
in Step 3, whereas if $s,t$ are both 
sufficiently close to $1$, then for each map of the form
$g_t^m\circ f_{(i,t)}$ we will find sufficiently close neighbour 
$g_s^k\circ f_{(i,s)}$ (where $k$ will be appropriately chosen), and vice-versa. 
Then we will make use of Lemma \ref{abc3}.\\

Let $\hat{g}$ be the linear part of $g$. Then 
by Lemma \ref{fildec2}, we see that for every $m\in\N$
and $x\in X$, we have:
\begin{equation}\label{fildec22}
g_t^{(m)}(x) = 
t^m\hat{g}^{(m)}(x-x_*)+x_*.
\end{equation}
Let $p$ be the period of $\hat{g}$. 
Take any $\varepsilon>0$ and choose $r\in(0,1)$ such that
\begin{equation}\label{fildec4}
(1-r^p)\cdot(\on{diam}(B\cup\{0\})+||x_*||)<\frac{\varepsilon}{2}.
\end{equation}
Then choose $\delta>0$ such that:
\begin{itemize}
\item[(a)] for $s,t\in[0,r]$, 
if $|t-s|<\delta$, then $h(A_{t},A_{s})<\varepsilon$;
\item[(b)] for $s,t\in[0,1]$, 
if $|t-s|<\delta$, then 
\[
{\sup\{||f_{(i,t)}(x)-f_{(i,s)}(x)||:i=1,...,N,\;x\in B\} 
<\frac{\varepsilon}{2}};
\]
\item[(c)] $(1-(r-\delta)^p)) \cdot 
(\on{diam}(B\cup\{0\})+||x_*||) \leq
\frac{\varepsilon}{2}.$
\end{itemize}
The choice of $\delta$ is possible by Step 3 (for item (a)), 
by Lemma \ref{fildec5} (for item (b)) and condition 
(\ref{fildec4}) (for item (c)).\\
Now choose $s,t\in[0,1)$ such that $|s-t|<\delta$. 
If $s,t\leq r$, then $h(A_t,A_s)\leq\e$ in view of (a). 
Hence assume that 
\begin{equation}\label{filaa3}
\max\{s,t\}\geq r.
\end{equation} 
Take any $i=1,...,N$ and $m=0,1,2,...$, and 
let $m',l'$ be such that $m=pm'+l'$, and $l'=0,...,p-1$. 
Then let $k'$ be the least nonnegative integer such that
\begin{equation*}
    s^{pk'+l'}\leq t^{pm'+l'}
\end{equation*}
and set $k:=pk'+l'$. We will show that
\begin{equation}\label{abc7}
    |t^m-s^k|\leq 1-(r-\delta)^p.
\end{equation}
Using $s^k\leq t^m<s^{k-p}$, we have
\[\begin{aligned}
|t^m-s^k| &=t^m-s^k\leq \min\{1,s^{k-p}\}-s^k= 
\min\{1-s^k,s^{k-p}(1-s^p)\} \\
&\leq\left\{ \begin{array}{ccc}1-s^{l'}&\mbox{if}&k'=0\\
s^{k-p}(1-s^p)&\mbox{if}&k'\geq 1\end{array}\right.\\ 
&\leq 1-s^p\leq 1-(r-\delta)^p, 
\end{aligned}\]
where the last iequality follows from  $r-\delta\leq s$ 
(thanks to (\ref{filaa3})). Thus we have shown (\ref{abc7}).

Now fix $i=1,...,L$ and choose any $x\in B$. 
Assume that $m\geq 1$ (which also implies $k\geq 1$). 
Set $z_t:=f_{(i,t)}(x)-x_*$ and $z_s:=f_{(i,s)}(x)-x_*$. 
Then by (b) and (c) from the choice of $\delta$, we have
\[
||z_t-z_s||=||f_{(i,t)}(x)-f_{(i,s)}(x)||<\frac{\varepsilon}{2}
\]
and
\[
||z_s||\leq||f_{(i,s)}(x)-0||+||x_*||\leq\on{diam}(B\cup\{0\})+
||x_*||\leq\frac{\varepsilon}{2}\cdot (1-(r-\delta)^p)^{-1}.
\] 
Hence by (\ref{fildec22}) and (\ref{abc7}), 
and the fact that $\hat{g}^{(p)} =\on{id}_\x$, we have
\begin{equation*} \begin{aligned}
  ||g^{(m)}_t\circ f_{(i,t)}(x) 
  -g^{(k)}_s\circ f_{(i,s)}(x)|| &=
  ||t^m\hat{g}^{(m)}(f_{(i,t)}(x)-x_*)+ x_*-s^k
  \hat{g}^{(k)}(f_{(i,s)}(x)-x_*)-x_*||
\\ 
  & =||t^m\hat{g}^{(m)}(z_t)-s^k\hat{g}^{(k)}(z_s)||\\
  &=
  ||t^m\hat{g}^{(pm'+l')}(z_t)-s^k\hat{g}^{(pk'+l')}(z_s)|| 
\\
  & =||t^m\hat{g}^{(l')}(z_t)-s^k\hat{g}^{(l')}(z_s)|| \\
  & \leq
  ||t^m\hat{g}^{(l')}(z_t)-t^m\hat{g}^{(l')}(z_s)|| + 
  ||t^m\hat{g}^{(l')}(z_s)-s^k\hat{g}^{(l')}(z_s)|| 
\\
  & =t^m\cdot ||\hat{g}^{(l')}(z_t-z_s)||+
  |t^m-s^k|\cdot ||\hat{g}^{(l')}(z_s)||\\
  &=
  t^m\cdot ||z_t-z_s||+|t^m-s^k|\cdot||z_s|| 
\\
  & \leq 
  ||z_t-z_s||+(1-(r-\delta)^p)\cdot ||z_s||\\
  &<\varepsilon.
\end{aligned}\end{equation*}

When $m=0$ (and consequently $k=0$), we also have
\begin{equation*}
  ||g_t^{(m)}\circ f_{(i,t)}(x) - 
  g_s^{(k)} \circ f_{(i,s)}(x)|| =
  ||f_{(i,t)}(x)-f_{(i,s)}(x)|| < 
  \frac{\varepsilon}{2}.
\end{equation*}
Similar reasoning works when the roles of $s$ and $t$ 
are switched. Hence we see that condition (\ref{abc1}) 
from Lemma \ref{abc3} is satisfied for IFSs $M_t$ and $M_s$, 
whose attractors are $A_{t}$ and $A_{s}$, 
respectively (for definitions of $M_t$ and $M_s$, 
see (\ref{fildec12})).
Thus, using Lemma \ref{abc3}, and the fact that 
\[
Lip(M_s,||\cdot||), Lip(M_t,||\cdot||) \leq 
Lip(F,||\cdot||)<1
\]
(recall definition of $F$ in (\ref{fildec14}) 
and notice that $Lip(g^{(m)}\circ f) = Lip(f)$ 
for $f\in F$), we get
\begin{equation*}
  h(A_{t},A_{s})\leq \frac{\e}{1-Lip(F,||\cdot||)}.
\end{equation*}
W conclude that the map $[0,1)\ni t\mapsto A_t$ 
is uniformly continuous.
\end{proof}

 As mentioned in Remark \ref{rem:com}, Examples \ref{ex:two}, \ref{ex:three}, \ref{ex:four}, and \ref{ex:inf} show that each
of the assumptions in the hypothesis of Theorem~\ref{thm:fildec15} is necessary.
If any assumption is removed, not only does the family $F_t^g$ not have a unique upper transition attractor, but it  may have no upper transition attractor at all.
In particular, Example~\ref{ex:inf} provides an
infinite dimensional one-parameter family where the function $g$ is not periodic
and the one-parameter family has no upper transition attractor.   For a more 
restricted one-parameter family, however, this assumption may 
not be necessary; see Question~\ref{ques:per}.
	
 \begin{example}\label{ex:two} [The assumption that $[0,1] \ni t \mapsto  f_{(i,t)}(x)$
is continuous is necessary.] 

 Let $F_t^g := \{f_{t}, g_{t} \}$ be a one-parameter family on $\R$, 
where $f_{t}(x) = 
tx/2 +(2-t)/(1-t), t\in [0,1)$, $f_1$ is any continuous function, and $ g_{t}(x) = tx, t\in [0,1]$.  Here $F_t^g$ satisfies the assumptions of  
Theorem~\ref{thm:fildec15} except that $[0,1] \ni t \mapsto  f_{t}(x)$  is not continuous at $t=1$ for any $x$.  The fixed point $q_t$ of $f_t$ is $q_t = 2/(1-t) \rightarrow \infty$ as $t \rightarrow 1$.  Since $q_{t} \in A_t$,
 the limit $\lim_{t\rightarrow 1} A_t$ does not exist.   
\end{example}

\begin{example}\label{ex:three} [The assumption that $x^* =  g^{-1}(0)$ is
necessary.]     

Let $F_t^g := \{f_{t},g_{t} \}$, where $g(x) = x, \, g_t(x)= tx+1$, and
$f_{t}(x) = tx/2 +1$. Here $F_t^g$ satisfies the assumptions of  
Theorem~\ref{thm:fildec15} except that $x^* = 1 \neq g^{-1}(0)$.  
In this case the fixed point $q_{t}$ of $g_t$ is $q_{t} = 1/(1-t) \rightarrow \infty$ as $t \rightarrow 1$.  Since $q_{t} \in A_t$, the limit $\lim_{t\rightarrow 1} A_t$ does
not exist.   
\end{example}

\begin{example}  \label{ex:four} [The assumption that $\sup\{\on{Lip}(F_t,||\cdot||):t\in[0,1]\}<1$ is
necessary.]

 On $\mathbb R$, let $F_t^g := \{f_{t}, g_t \}$, where $g_t(x) = -tx, f_t(x) = -tx+t +1$.  
(This is Example~\ref{ex:five} from Section~\ref{sec:uls}.) Here $F_t^g$ satisfies the assumptions of  Theorem~\ref{thm:fildec15} except that $\lim_{t\rightarrow 1} Lip(f_t,||\cdot||) =1$. For $t\in (0,1)$ we have  $A_t = [-t/(1-t), 1/(1-t)]$; therefore
 $\lim_{t\rightarrow 1} A_t$ does not exist.   
\end{example}

\begin{example} \label{ex:inf}
[The assumption that $g$ is periodic is necessary.] 

Let $\x:=\ell^\infty(\mathbb{C})$ denotes
the real Banach space of all bounded complex sequences, 
endowed with the supremum norm. For $k\in\N$, 
set $\alpha_k:=\frac{\pi}{2k}$, 
and define $g:\x\to \x$ by
\begin{equation*}
g((x_k)):=\left(x_ke^{i\alpha_k}\right),
\end{equation*}
that is, each coordinate $x_k$ is rotated around 
the origin by angle $\alpha_k$. 
Next define $f:\x\to \x$ by
\begin{equation*}
f((x_k)):=\left(\frac{1}{4}(x_k-1)\right).
\end{equation*}
Observe that $f(\textbf{1})=\textbf{0}$, 
where $\textbf{1}$ and $\textbf{0}$ are sequences 
of ones and zeroes, respectively. 
For $t\in(0,1]$, define
\begin{equation*}
  g_t((x_k)):=tg((x_k)) = 
  \left(tx_ke^{i\alpha_k}\right)
\end{equation*}
and 
\begin{equation*}
  f_t((x_k)) :=tf((x_k))+ \textbf{1} =
  \left(\frac{t}{4}x_k+1-\frac{t}{4}\right).
\end{equation*}
Clearly, the map 
$[0,1]\ni t\mapsto f_t(x)$ 
is continuous for every $x\in\x$, 
$Lip(f_t)=\frac{t}{4}$ and 
$g^{-1}(\textbf{0})=\textbf{0}$. 
Hence, setting $F_t:= \{f_t\}$, 
all assumptions of Theorem \ref{thm:fildec15} are satisfied except that $\hat{g}$ 
(which here coincides with $g$) is not periodic.  
We will now show that $F^g_t$ does not have
any upper transition attractor.
\\

Let
\begin{equation}\label{fildec17}
D:=\{\textbf{0}\}\cup \bigcup_{m=0}^\infty 
\overline{B}\Big(\Big(\frac{3}{4}t^m e^{im\alpha_k}\Big),
\frac{1}{4}t^m\Big)
\end{equation}
where $\overline{B}(\cdot,\cdot)$ denotes 
the closed ball in $\x$, where the first coordinate is the center and
 the second coordinate is the radius. 
We first show that for every $t\in[0,1]$, 
the attractor $A_t$ of $F^g_t:= \{f_t,g_t\}$ 
is a subset of $D$. Clearly, the set 
$D \subseteq \overline{B}(\textbf{0},1)$, 
and it is easy to see that
\begin{equation*}
f_t(\overline{B}(\textbf{0},1)) \subseteq 
\overline{B}\Big(\Big(\frac{3}{4}\Big),
\frac{1}{4}\Big) \subseteq D,
\end{equation*}
(where $\Big(\frac{3}{4}\Big)$ is the constant sequence
whose coordinates equal $\frac{3}{4}$). Hence 
\[
f_t(D) \subseteq D.
\]
On the other hand, for every $m=0,1,2,....$, we have
\[
g_t\Big(\overline{B}\Big(\Big(\frac{3}{4}t^m e^{im\alpha_k}\Big),
\frac{1}{4}t^m\Big)\Big) =
\overline{B}\Big(\Big(\frac{3}{4}t^{m+1}e^{i(m+1)\alpha_k}\Big),
\frac{1}{4}t^{m+1}\Big) \subseteq D
\]
and $g_t(\textbf{0})=\textbf{0}$;
so we also have
\[
g_t(D)\subseteq D.
\]
Altogether we have $F_t^g(D)\subseteq D$. 
As $D$ is closed, we get (\ref{fildec17}).\\
Now since the sequence $\textbf{1}$ is 
the fixed point of $f_t$, it belongs 
to the attractor $A_t$, and hence also 
\begin{equation}\label{fildec21}
\Big(t^{m}e^{im\alpha_k}\Big) =
g_t^{(m)}(\textbf{1}) \in A_t
\end{equation}
for every $m\in\N$.

We are ready to prove that $(F^g_t)$ does not 
generate any upper transition attractor, that is, 
there is no sequence $t_n\in[0,1)$ with $t_n\to 1$ 
so that $(A_{{t_n}})$ converges. 
First observe that it is enough to prove that 
\begin{equation}\label{filaa}
\forall_{s\in[\frac{1}{2},1)}\; 
\exists_{t_0<1}\;\forall_{t\in[t_0,1)}\; 
h(A_t,A_s)\geq\frac{1}{2}.
\end{equation}
Indeed, suppose that (\ref{filaa}) holds, and 
for some sequence $(t_n)\subseteq [0,1)$ 
converging to $1$ we have that $(A_{t_n})$ is convergent. 
Then $(A_{t_n})$ is a Cauchy sequence in $\K(X)$ 
and we can find $n_0\in\N$ so that 
$h(A_{t_{n_0}},A_{t_n})<\frac{1}{2}$ for all $n\geq n_0$ 
and $t_{n_0}\geq \frac{1}{2}$. On the other hand, 
setting $s:=t_{n_0}$ and using (\ref{filaa}), 
we can find $n\geq n_0$ with 
$h(A_{t_n},A_{t_{n_0}})\geq \frac{1}{2}$, 
which gives a contradiction. 

We will now prove (\ref{filaa}).
Choose any $s\in[\frac{1}{2},1)$, 
and find the least $k_0\in\N$ such that 
$s^{k_0}<\frac{1}{2}$. As $s\geq \frac{1}{2}$, 
we see that $s^{k_0}\geq \frac{1}{4}$. 
Since $1-s^{k_0}>\frac{1}{2}$, we can find $t_0<1$ 
such that for $t\in[t_0,1)$ we have
\begin{equation}\label{filaaa}
t^{2k_0}-s^{k_0}>\frac{1}{2}.
\end{equation}
Choose any $(x_k)\in A_s$. By the definition of $D$ 
(see (\ref{fildec17})) and the fact that $A_t\subset D$, 
we can consider three cases: \\ \\
Case 1. $(x_k)\in \overline{B}\Big(\Big(\frac{3}{4}s^m e^{im\alpha_k}\Big), 
\Big(\frac{1}{4}s^m\Big)\Big)$ for some $m\leq k_0$. 

Since  $m\alpha_{k_0}\leq k_0\frac{\pi}{2k_0}=\frac{\pi}{2}$, we have
\[
t^{2k_0}\leq \Big|t^{2k_0}+\frac{3}{4}s^m e^{im\alpha_{k_0}}\Big|\leq 
\Big|t^{2k_0}+x_{k_0}\Big|+
\Big|-x_{k_0}+\frac{3}{4}s^m e^{im\alpha_{k_0}}\Big|\leq 
\Big|t^{2k_0}+x_{k_0}\Big|+\frac{1}{4},
\]
so by (\ref{filaaa}) we get
\[
\Big|t^{2k_0}+x_{k_0}\Big|\geq t^{2k_0}-\frac{1}{4}\geq 
t^{2k_0}-s^{k_0}>\frac{1}{2}.
\]
Case 2. $(x_k)\in \overline{B}\Big(\Big(\frac{3}{4}s^m e^{im\alpha_k}\Big),
\Big(\frac{1}{4}s^m\Big)\Big)$ for some $m\geq k_0$. 

Since $t^{2k_0}>s^{k_0}\geq s^m$, we have
\[\begin{aligned}
t^{2k_0}-\frac{3}{4}s^{k_0} &
\leq t^{2k_0}-\frac{3}{4}s^{m}\leq \Big|t^{2k_0}+
\frac{3}{4}s^{m} e^{im\alpha_{k_0}}\Big | \leq  
\Big|t^{2k_0}+x_{k_0}\Big|+
\Big|-x_{k_0}+\frac{3}{4}s^{m}e^{im\alpha_{k_0}}\Big| 
\\ & 
\leq \Big|t^{2k_0}+x_{k_0}\Big|+ 
\frac{1}{4}s^m\leq \Big|t^{2k_0}+x_{k_0}\Big|+
\frac{1}{4}s^{k_0}.
\end{aligned}\]
Thus by (\ref{filaaa}),
\[
\Big|t^{2k_0}+x_{k_0}\Big|\geq 
t^{2k_0}-\frac{3}{4}s^{k_0}- 
\frac{1}{4}s^{k_0}>\frac{1}{2}.
\]
Case 3. $(x_k)=\textbf{0}$. 

In this case
$$
\Big|t^{2k_0}+x_{k_0}\Big|=t^{2k_0}>\frac{1}{2}.
$$
Summing up, we have that 
\begin{equation*}
\Big|\Big|\Big(t^{2k_0}e^{i\frac{2k_0}{2k}\pi}\Big)-(x_k)\Big|\Big|\geq 
\Big|t^{2k_0}e^{i\frac{2k_0}{2k_0}\pi}-x_{k_0}\Big|=
|-t^{2k_0}-x_{k_0}|=|t^{2k_0}+x_{k_0}|>\frac{1}{2}.
\end{equation*}
By (\ref{fildec21}) we see that 
$\Big(t^{2k_0} e^{i\frac{2k_0}{2k}\pi}\Big)\in A_t$, 
so the above shows that
\[
h(A_t,A_s)\geq \inf_{(x_k)\in A_s} 
\Big|\Big|\Big(t^{2k_0}e^{i\frac{2k_0}{2k}\pi}\Big)-
(x_k)\Big|\Big|\geq \frac{1}{2}
\]
and the proof of (\ref{filaa}) is complete.
\end{example}

\section{Open Problems} \label{sec:op}
 
Examples~\ref{ex:S1}, \ref{ex:S2}, and \ref{ex:S3} show that an IFS with an attractor need not be contractive.  In Example~\ref{ex:S2} no function in the IFS $F$ is a contraction.  In fact,  with respect to any equivalent metric $d$ on the circle, $Lip(f,d)>1$ for all $f\in F$.  This is not the case in Example~\ref{ex:S3}.  It can be asked whether such a strong counterexample exists for $\R^n$.  

\begin{question}   Is there an example of an IFS $F$ on $\R^n$ that has an attractor $A$ with basin $\R^n$ but (1) $A$ is not the attractor of any proper subset of $F$ and (2) with respect to any metric $d$ equivalent to the Euclidean metric we have $Lip(f,d)>1$ for all $f\in F$.
\end{question}

For a large class of one-parameter IFS families, 
Theorem~\ref{thm:fildec15} guarantees the existence of 
a unique upper transition attractor $A^{\bullet}$ 
such that $A^{\bullet} = \lim_{t \rightarrow t_0} A_t$ 
at a threshold $t_0$. The theorem, however, assumes 
that the linear part of the special function $g$ is periodic.  
Example~\ref{ex:inf} shows that, in general, 
the assumption of periodicity of the linear part cannot be dropped. 
But the underlying space in that example is 
a non-separable infinite dimensional space.  

\begin{question}  \label{ques:per}
Can the assumption of periodicity of the linear part 
of the function $g$ in Theorem~\ref{thm:fildec15} 
be dropped assuming a less exotic space? In particular, can the assumption be dropped for a one-parameter similarity
family with threshold $t_0$ satisfying the following properties: 
\begin{itemize} 
\item All $f_t\in F_t$ are contractions for
$t\in [0,t_0]$,  $g_t$ is a contraction for
$t\in [0,t_0)$  and $Lip(g_{t_0}) = 1$, and
\item the unique fixed point of each $f_t\in F_t$ and $g_t$ is independent of $t \in [0,t_0)$.  
\end{itemize}

\end{question}

In \cite[Theorem 8.2]{V} relationships between 
the upper and lower transition attractors are given for a special type 
of one-parameter family. It can be asked whether the same 
relationships hold in a more general setting. In particular:

\begin{conjecture}
If $F_t$ satisfies properties (H1), (H2), (H3) of Section~\ref{sec:uls} 
and if $A_{\bullet} = A^{\bullet}$ for some 
upper transition attractor of $F_t$, then $A^{\bullet}$ is the unique 
upper transition attractor of $F_t$ and  $A^{\bullet}$ is an attractor of $F_1$.  
\end{conjecture}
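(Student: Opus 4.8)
The plan is to reduce both assertions to a single statement about the full limit of the attractors, to dispatch the parts that follow from the machinery already assembled in the paper, and then to isolate the one analytic estimate that I expect to be the genuine obstacle. By Theorem~\ref{thm:LU}, every upper transition attractor $B^{\bullet}$ of $F_t$ satisfies $F_1(B^{\bullet})=B^{\bullet}$ and $B^{\bullet}\supseteq Q$, hence $B^{\bullet}\supseteq A_{\bullet}$; thus the hypothesis $A_{\bullet}=A^{\bullet}$ says that the \emph{smallest} conceivable upper transition attractor is attained, along some increasing $t_n\to1$ with $A_{t_n}\to A_{\bullet}$ in the Hausdorff metric $h$. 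Writing $e(X,Y):=\sup_{x\in X}\on{dist}(x,Y)$ for the Hausdorff excess, I would show that both conclusions follow once we prove
\[
\lim_{t\to1^-} h(A_t,A_{\bullet})=0,
\]
i.e. that the full (not merely subsequential) limit of $A_t$ exists and equals $A_{\bullet}$: uniqueness is then immediate since any upper transition attractor is a subsequential limit, and the attractor property of $A_{\bullet}$ will be extracted as well.

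\textbf{The excess $e(A_{\bullet},A_t)$ (the easy half).} Recall $A_{\bullet}=\overline{\bigcup_{n\ge0}F_1^{\,n}(Q)}$ by Theorem~\ref{thm:lowA}(ii). A typical point of $\bigcup_n F_1^{\,n}(Q)$ has the form $f_{(i_1,1)}\circ\cdots\circ f_{(i_n,1)}(q_j)$. Since $q_{j,t}\in A_t$ and $A_t$ is $F_t$-invariant, the approximant $f_{(i_1,t)}\circ\cdots\circ f_{(i_n,t)}(q_{j,t})$ lies in $A_t$, and by (H1) together with $\on{Lip}(F_1,d)\le1$ (Remark~\ref{rem:H123imply}(b)) an easy induction on $n$ shows it converges to $f_{(i_1,1)}\circ\cdots\circ f_{(i_n,1)}(q_j)$ as $t\to1$. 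As $\bigcup_n F_1^{\,n}(Q)$ is dense in the compact set $A_{\bullet}$, a finite $\varepsilon$-net argument upgrades this to $e(A_{\bullet},A_t)\to0$. Hence $\liminf_{t\to1}A_t\supseteq A_{\bullet}$, and the whole difficulty is concentrated in the reverse excess $e(A_t,A_{\bullet})$.

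\textbf{From full convergence to the two conclusions.} For the attractor property, note that $F_1$ is nonexpansive on $\K(\x)$ (Remark~\ref{rem:H123imply}(b)) and $F_1(A_{\bullet})=A_{\bullet}$, so for any compact $K\supseteq A_{\bullet}$ one has $F_1^{(k)}(K)\supseteq A_{\bullet}$ and the quantity $h(F_1^{(k)}(K),A_{\bullet})=e(F_1^{(k)}(K),A_{\bullet})$ is non-increasing in $k$; it therefore suffices to show this limit is $0$ for $K$ in a neighbourhood of $A_{\bullet}$. The plan is to exploit the three-term estimate, valid for every $n$ and every $k$,
\[
e(F_1^{(k)}(K),A_{\bullet})\le
h\!\big(F_1^{(k)}(K),F_{t_n}^{(k)}(K)\big)
+\on{Lip}(F_{t_n},d)^{k}\,h(K,A_{t_n})
+h(A_{t_n},A_{\bullet}),
\]
using $F_{t_n}^{(k)}(K)\to A_{t_n}$ (Hutchinson), continuity of $t\mapsto F_t$ on compacta (Lemma~\ref{fildec5}), and $A_{t_n}\to A_{\bullet}$. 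Once the right-hand side is driven to $0$, attraction holds, and the resulting attractor is genuine: any compact $F_1$-invariant set inside its basin collapses onto $A_{\bullet}$ under iteration and so coincides with it, reconfirming uniqueness.

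\textbf{The main obstacle.} The crux is precisely the right-hand side above. Under (H1)--(H3) we know only $\on{Lip}(F_{t},d)<1$ for each $t<1$, and nothing forbids $\lambda_n:=\on{Lip}(F_{t_n},d)\to1$. For fixed $k$ the first term vanishes as $n\to\infty$, but $\lambda_n^{k}\to1$, so the contraction term $\lambda_n^{k}h(K,A_{t_n})$ is useless; for fixed $n$ the contraction term vanishes as $k\to\infty$, but then $h(F_1^{(k)}(K),F_{t_n}^{(k)}(K))$ need not vanish, since the two iterations drift apart over long time. This failure to interchange the limits $k\to\infty$ and $n\to\infty$, caused by the \emph{non-uniform} contractivity $\lambda_n\to1$, is the heart of the matter and is exactly why the statement is only conjectural. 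I expect that closing the gap requires extracting from the equality $A_{\bullet}=A^{\bullet}$ a quantitative, $k$-uniform attraction rate, or else imposing additional structure that restores uniform contractivity of an induced infinite IFS — as in the periodic-isometry mechanism of Theorem~\ref{thm:fildec15} and the common-fixed-point hypotheses of Question~\ref{ques:per}, where the compositions $g_t^{(m)}\circ f_{(i,t)}$ retain the uniform ratio $\on{Lip}(F,\|\cdot\|)<1$.
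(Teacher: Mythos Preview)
The statement is posed in the paper as an open \emph{conjecture} (Section~\ref{sec:op}); there is no proof in the paper to compare against, and you correctly present an outline that isolates the obstacle rather than a purported proof. Your ``easy half'' $e(A_\bullet,A_t)\to 0$ is sound, and your diagnosis of the core difficulty---the non-uniform contractivity $\on{Lip}(F_{t_n},d)\to 1$ preventing the interchange of $k\to\infty$ and $n\to\infty$ in the three-term estimate---is accurate and is precisely what the periodicity hypothesis in Theorem~\ref{thm:fildec15} is engineered to circumvent.

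One logical slip: your reduction ``both conclusions follow once we prove $\lim_{t\to1^-}h(A_t,A_\bullet)=0$'' is justified only for uniqueness. Full convergence of $A_t$ gives no control over the orbits $F_1^{(k)}(K)$ for $K\neq A_\bullet$; your own three-term estimate runs into the same obstacle whether or not the full limit exists, so the attractor conclusion is not a corollary of full convergence. Worse, the attractor conclusion appears to be false under (H1)--(H3) alone. On $\R$ take $f_{(1,t)}(x)=tx$ and $f_{(2,t)}(x)=-tx$. Then (H1)--(H3) hold with $Q=\{0\}$, and $A_t=\{0\}$ for every $t<1$, so $A_\bullet=A^\bullet=\{0\}$ and uniqueness is trivial; but $F_1=\{\id,-\id\}$ gives $F_1^{(k)}(\{x\})=\{x,-x\}$ for all $k$, hence $\{0\}$ is not an attractor of $F_1$ in the sense of the paper. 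Thus the obstruction you isolate is not merely a limitation of the method: some hypothesis beyond (H1)--(H3) and $A_\bullet=A^\bullet$ is genuinely needed for the attractor claim, which is consistent with the paper leaving the statement as a conjecture.
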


Recall that in a metric space $(\x,d)$, 
a {\it segment} with ends $x,y\in \x$ is defined by 
$[x,y] := \{z\in\x: d(x,z)+d(z,y)=d(x,y)\}$.
A set $S\subseteq \x$ is {\it metrically convex} if
 $[x,y]\subseteq S$ for all $x,y\in S$. 
The {\it metrically convex hull} of $S\subseteq \x$ is 
$\operatorname{conv}_d S := \bigcup_{x,y\in S} [x,y]$.

\begin{conjecture}
If the functions in $F_t$ map metrically convex sets 
onto metrically convex sets, then
the metrically convex hulls of $A_{\bullet}$ 
and $A^{\bullet}$ in $(\x,d)$ coincide: 
$\operatorname{conv}_d A_{\bullet} = 
\operatorname{conv}_d A^{\bullet}$.
\end{conjecture}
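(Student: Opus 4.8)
The plan is to prove the two inclusions of $\operatorname{conv}_d A_{\bullet}=\operatorname{conv}_d A^{\bullet}$ separately, the first being essentially free. By Theorem~\ref{thm:LU}(ii) we have $A_{\bullet}\subseteq A^{\bullet}$, and $\operatorname{conv}_d$ is monotone: if $S\subseteq T$, then every segment $[x,y]$ with $x,y\in S$ also has its endpoints in $T$, so $\operatorname{conv}_d S=\bigcup_{x,y\in S}[x,y]\subseteq\bigcup_{x,y\in T}[x,y]=\operatorname{conv}_d T$. Hence $\operatorname{conv}_d A_{\bullet}\subseteq\operatorname{conv}_d A^{\bullet}$, and the entire content of the conjecture is the reverse inclusion. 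Unwinding the definition, it suffices to show that for every pair $x,y\in A^{\bullet}$ the segment $[x,y]$ is contained in $\operatorname{conv}_d A_{\bullet}=\bigcup_{u,v\in A_{\bullet}}[u,v]$.

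Write $f_i:=f_{(i,1)}$. The engine of the argument is a convexity-transport lemma extracted from the hypothesis: if $[u,v]$ is metrically convex, then $f_i([u,v])$ is metrically convex and contains $f_i(u),f_i(v)$, whence $[f_i(u),f_i(v)]\subseteq f_i([u,v])$; thus each $f_i$ carries a segment onto a set containing the image segment. I would couple this with the invariance $F_1(A^{\bullet})=A^{\bullet}$ from Theorem~\ref{thm:LU}(i) to descend along addresses. Given $x,y\in A^{\bullet}$, invariance lets me write $x=f_{i_1}(x_1)$ and $y=f_{j_1}(y_1)$ with $x_1,y_1\in A^{\bullet}$, and iterate, generating codes $(i_1,i_2,\dots)$ and $(j_1,j_2,\dots)$. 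When a common prefix $f_{i_1}\circ\cdots\circ f_{i_m}$ is stripped off, the transport lemma confines the corresponding piece of $[x,y]$ inside $f_{i_1}\circ\cdots\circ f_{i_m}([x_m,y_m])$; letting $m\to\infty$, the self-regeneration formula $A_{\bullet}=\overline{\bigcup_{n\geq 0}F_1^n(Q)}$ from Theorem~\ref{thm:lowA}(ii) should identify the endpoints reached in the limit with points built from $Q\subseteq A_{\bullet}$. The aim of this bookkeeping is to realise every $z\in[x,y]$ as a limit of points lying on segments $[u,v]$ with $u,v\in A_{\bullet}$, i.e. inside $\operatorname{conv}_d A_{\bullet}$ (up to closure).

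Two difficulties stand in the way, and they are the reason the statement is posed as a conjecture. First, the one-step hull $\operatorname{conv}_d S=\bigcup_{x,y\in S}[x,y]$ is pathological in a general metric space: segments need not be unique, need not themselves be metrically convex (so even the transport lemma above needs a uniquely geodesic ambient space), and $\operatorname{conv}_d$ is neither idempotent nor continuous for the Hausdorff metric, so the limit steps and the reconciliation of the closure in Theorem~\ref{thm:lowA}(ii) with the hull cannot be handled by soft arguments. Second, the descent is clean only on the diagonal: when $x$ and $y$ have different leading addresses $i_1\neq j_1$, the segment $[x,y]$ bridges two distinct pieces $f_{i_1}(A^{\bullet})$ and $f_{j_1}(A^{\bullet})$, the transport lemma no longer applies, and such ``cross'' segments must be captured by a separate mechanism --- precisely the point at which one must use that the convexity-extremal points of $A^{\bullet}$ already lie in $A_{\bullet}$. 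I would therefore first establish the result in $\R^d$ with the Euclidean metric, and more generally in a strictly convex Banach space, where metric convexity coincides with ordinary convexity, segments are unique line segments varying continuously with their endpoints, and the Mazur--Ulam form of isometries used in Lemma~\ref{fildec2} is available; the cross-term and extremal-point step is where I expect a fully general proof to resist, and I would isolate it as the crux.
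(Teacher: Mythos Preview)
The paper gives no proof of this statement: it is stated as a \emph{conjecture} in the Open Problems section, with no argument offered. There is therefore nothing to compare your proposal against on the paper's side.

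As for your proposal itself, you are candid that it is not a proof, and you correctly locate the two obstructions. The first inclusion $\operatorname{conv}_d A_{\bullet}\subseteq\operatorname{conv}_d A^{\bullet}$ is indeed free from Theorem~\ref{thm:LU}(ii). For the reverse inclusion, your ``transport lemma'' and address-descent idea are reasonable heuristics, but the argument as written does not close: the cross-term case $i_1\neq j_1$ is exactly where the work lies, and appealing to ``convexity-extremal points of $A^{\bullet}$ already lie in $A_{\bullet}$'' is itself an unproven claim at least as strong as the conjecture. Moreover, even in the diagonal case your limit step needs $\operatorname{conv}_d A_{\bullet}$ to be closed (or to work with its closure), which the one-step hull in a general metric space need not be; and the transport lemma requires each segment $[u,v]$ to be metrically convex, which fails in spaces that are not uniquely geodesic. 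Your instinct to retreat to $\R^d$ or a strictly convex Banach space is sound, but even there the extremal-point step --- showing that the extreme points of $\operatorname{conv}A^{\bullet}$ lie in $A_{\bullet}$ --- is the heart of the matter and is not supplied. In short: the conjecture remains open, and your write-up is an honest outline of a plausible attack together with an accurate diagnosis of where it stalls, not a proof.
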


\section*{Acknowledgement}
The contribution of the second author to the research in this paper was done while she was at University College Dublin.


\begin{thebibliography}{99} 


\bibitem{AokiHiraide} N. Aoki and K. Hiraide,
Topological Theory of Dynamical Systems. Recent Advances,
\textit{North-Holland}, Amsterdam, 1994.   
                                                                                   
\bibitem{ABVW} R. Atkins, M. F. Barnsley, D. C. Wilson and A. Vince,
A characterization of point-fibred affine iterated function systems,
{\it Topol. Proc.} {\bf 38} (2010) 189-211.

\bibitem{BKNNS} T. Banakh, W. Kubi\'{s}, N. Novosad, 
M. Nowak, F. Strobin,
Contractive function systems, their attractors and metrization,
{\it Topol. Methods Nonlinear Anal.} 46(2) (2015), 1029-1066. 

\bibitem{BarnsleyElton} M.F. Barnsley and J.H. Elton,
A new class of Markov processes for image encoding,
\textit{Adv. Appl. Prob.} \textbf{20} (1988), 14-32.

\bibitem{bv1} M. F. Barnsley and A. Vince, 
Real projective iterated function systems, 
{\it J. Geometric Analysis} {\bf 22} (2012) 1137-1172.

\bibitem{bv3}  M. F. Barnsley and A. Vince, 
The chaos game on a general iterated function system, 
{\it Ergodic Theory Dyn. Syst.} {\bf 31} (2011), 1073-1079.

\bibitem{bv2} M. F. Barnsley and A. Vince, 
The eigenvalue problem for linear and affine iterated function systems, 
{\it Linear Algebra Appl.} {\bf 435} (2011) 3124-3138.



\bibitem {BW} M. A. Berger and Y. Wang, 
Bounded semigroups of matrices, 
{\it Linear Algebra Appl.} \textbf{66} (1992) 21-27.



\bibitem {DL} I. Daubechies and J. C. Lagarias, 
Sets of matrices all infinite products of which converge, 
{\it Linear Algebra Appl.} \textbf{161} (1992) 227-263.



\bibitem{FitzKunze} M. Fitzsimmons and H. Kunze,
Small and minimal attractors of an IFS,
\textit{Commun. Nonlinear Sci. Numer. Simul.} \textbf{85} 92020), Article ID 105227, 14 p. 


\bibitem{FlemingJamison}
R.J. Fleming and J.E. Jamison,
Isometries on Banach Spaces: Function Spaces,
{\it Chapman and Hall/CRC}, Boca Raton, 2003.

\bibitem{Gorn} L. G\'{o}rniewicz, 
Topological fixed point theory of multivalued mappings, Springer, Dordrecht 2006.

\bibitem{HuPapa}
S. Hu, N.S. Papageorgiou,
Handbook of Multivalued Analysis. Volume I: Theory,
\textit{Kluwer Academic Publishers}, Dordrecht, 1997.

\bibitem {hutchinson} J. Hutchinson, 
Fractals and self-similarity,
\textit{Indiana Univ. Math. J.} \textbf{30} (1981) 713-747.

\bibitem{J} J. Jachymski, 
Continuous dependence of attractors of iterated function systems, 
\textit{J. Math. Anal. Appl.} \textbf{198} (1996), no. 1, 221--226.

\bibitem {LJ} L. Jan\"os, 
A converse of Banach’s contraction theorem, 
{\it Proc. Amer. Math. Soc.} {\bf 18} (1967), 287-289.



\bibitem {lm1} A. Lasota and J. Myjak, 
Semifractals, 
{\it Bull. Pol. Acad. Sci. Math.} {\bf 44} (1996), no. 1, 5-21.

\bibitem {lm2} A. Lasota and J. Myjak, 
Attractors of multifunctions, 
{\it Bull. Pol. Acad. Sci. Math.} {\bf 48} (2000), no. 3, 319-334.

\bibitem {SL} S. Leader,  
A topological characterization of Banach contractions,
{\it Pacific J. Math.} {\bf 69} (1977), no. 2, 461-466.



\bibitem{KS}  K. Le\'{s}niak and N. Snigireva, 
Iterated functions systems enriched with symmetry, 
{\it to appear in Constructive Approximation} (2021).  

\bibitem{lss} K. Le\'{s}niak, N. Snigireva and F. Strobin, 
Weakly contractive iterated function systems and beyond: A manual, 
{\it J. Difference Equ. Appl.} {\bf 26} (2020), no. 8, 1114-1173.


\bibitem{Mantica} G. Mantica,
Dynamical systems and numerical analysis: 
The study of measures generated by uncountable I.F.S.,
\textit{Numer. Algorithms} \textbf{55} (2010), no. 2-3, 321-335. 


\bibitem{MauldinUrbanski} R.D. Mauldin and M. Urba\'nski,
Graph Directed Markov Systems. Geometry and Dynamics of Limit Sets,
\textit{Cambridge University Press}, Cambridge, 2003. 

\bibitem{IFStile} D. Mekhontsev, IFStile, https://ifstile.com/

\bibitem{MiculescuMihail} R. Miculescu and A. Mihail, On a question of A. Kameyama concerning self-similar metrics,  
\textit{J. Math. Anal. Appl.} \textbf{422} (2015), no. 1, 265-271.

\bibitem{MS} J. Myjak and T. Szarek, 
Attractors of iterated function systems and Markov operators, 
{\it Abstr. Appl. Anal.} {\bf 2003} (2003), no. 8, 479-502.

\bibitem{Nadler} S. B. Nadler, Jr, Sequences of contractions and fixed points, {\it Pacific J. Math.} {\bf 27} (1968), 579–585.


\bibitem{Reddy} W.L. Reddy,
Expanding maps on compact metric spaces, 
{\it Topology Appl.} {\bf 13} (1982), 327-334. 

\bibitem{RS} G.-C. Rota and W.G. Strang, 
A note on the joint spectral radius, 
{\it Nederl. Akad. Wet. Proc. Ser. A} \textbf{63} (1960) 379-381.

\bibitem{st} \"O. Stenflo, 
A survey of average contractive iterated function systems,  
{\it J. Difference Equ. Appl.} {\bf 18} (2012), no. 8, 1355-1380.

\bibitem{S} F. Strobin, 
Contractive iterated function systems enriched with nonexpansive maps, 
{\it Results in Mathematics} {\bf 76}, Article number: 153 (2021).

\bibitem{V2}  A. Vince, 
M\"obius iterated function systems, 
{\it Trans. Amer. Math. Soc.} {\bf 365} (2013) 491-509.

\bibitem{V}  A. Vince, 
Thresholds for one-parameter families of affine iterated function systems, 
{\it Nonlinearity} {\bf 33}(2020), no. 12, 6541-6563.    

\end{thebibliography}
\end{document}